\def\todaysdate{2 October 2020}
\definecolor{lightblue}{rgb}{0.8,0.8,1}
\numberwithin{equation}{section}
\newtheoremstyle{italicised}
        {\topsep}{\topsep}  % space above % space below
        {\itshape}  % body font
        {}  % indent (empty = no indent)
        {\bfseries}  % header font
        {}  % punctuation after header
        {1ex}  % space after header
        {}  % manually specify the header
\theoremstyle{italicised}
\newtheorem{thm}{Theorem}[section]
\newtheorem{lem}[thm]{Lemma}
\newtheorem{prop}[thm]{Proposition}
\newtheorem{coro}[thm]{Corollary}
\newtheorem{athm}{Theorem}
\newtheorem{acoro}[athm]{Corollary}
\newtheoremstyle{upright}
        {\topsep}{\topsep}  % space above % space below
        {\upshape}  % body font
        {}  % indent (empty = no indent)
        {\bfseries}  % header font
        {}  % punctuation after header
        {1ex}  % space after header
        {}  % manually specify the header
\theoremstyle{upright}
\newtheorem{defn}[thm]{Definition}
\newtheorem{rmk}[thm]{Remark}
\newtheorem{eg}[thm]{Example}
\newtheorem{notation}[thm]{Notation}
\newtheorem{construction}[thm]{Construction}
\newtheorem{inputdata}[thm]{Input}
\newtheorem*{note}{Note}
\newtheoremstyle{italicised-restate}
        {\topsep}{\topsep}  % space above % space below
        {\itshape}  % body font
        {}  % indent (empty = no indent)
        {\bfseries}  % header font
        {}  % punctuation after header
        {1ex}  % space after header
        {\thmname{#1}\thmnote{ \bfseries #3}}  % manually specify the header
\theoremstyle{italicised-restate}
\newtheorem{rthm}{Theorem}
\newtheorem{rcoro}{Corollary}
\renewcommand*{\@seccntformat}[1]{\upshape\csname the#1\endcsname.\hspace{1ex}}
\renewcommand*{\section}{\@startsection{section}{1}{\z@}%
	{2.5ex \@plus 1ex \@minus 0.2ex}%
	{1.5ex \@plus 0.2ex}%
	{\normalfont\large\bfseries}}
\renewcommand*{\subsection}{\@startsection{subsection}{2}{\z@}%
	{2.5ex \@plus 1ex \@minus 0.2ex}%
	{1.5ex \@plus 0.2ex}%
	{\normalfont\normalsize\bfseries}}
\renewcommand*{\subsubsection}{\@startsection{subsubsection}{3}{\z@}%
	{2.5ex \@plus 1ex \@minus 0.2ex}%
	{-1.5ex \@plus -0.2ex}%
	{\normalfont\normalsize\bfseries}}
\renewcommand*{\paragraph}{\@startsection{paragraph}{4}{\z@}%
	{2.5ex \@plus 1ex \@minus 0.2ex}%
	{-1.5ex \@plus -0.2ex}%
	{\normalfont\normalsize\bfseries}}
\renewcommand*{\subparagraph}{\@startsection{subparagraph}{5}{\z@}%
	{2.5ex \@plus 1ex \@minus 0.2ex}%
	{-1.5ex \@plus -0.2ex}%
	{\normalfont\normalsize\slshape}}
\newcommand{\lhto}{\lhook\joinrel\longrightarrow}
\newcommand{\cf}{\textit{cf}.\ }
\newcommand{\incl}[3][right]%
{%
\draw[<-,>=#1 hook] #2 to ($ #2!0.5!#3 $);
\draw[->] ($ #2!0.5!#3 $) to #3;%
}
\newcommand{\inclusion}[5][right]%
{%
\draw[<-,>=#1 hook] #4 to ($ #4!0.5!#5 $) node[#2,font=\small]{#3};
\draw[->] ($ #4!0.5!#5 $) to #5;%
}
\newenvironment{itemizeb}%
{\begin{compactitem}

}%
{\end{compactitem}}
\newcommand{\cB}{\mathcal{B}}
\newcommand{\cP}{\mathcal{P}}
\newcommand{\cR}{\mathcal{R}}
\newcommand{\bD}{\mathbb{D}}
\newcommand{\bP}{\mathbb{P}}
\newcommand{\bQ}{\mathbb{Q}}
\newcommand{\bR}{\mathbb{R}}
\newcommand{\bZ}{\mathbb{Z}}
\renewcommand{\geq}{\geqslant}
\renewcommand{\leq}{\leqslant}
\renewcommand{\footnoterule}{%
  \kern -3pt
  \hrule width \textwidth height 0.4pt
  \kern 2.6pt
}
\newlength{\dhatheight}
\newcommand{\doublehat}[1]{%
    \settoheight{\dhatheight}{\ensuremath{\hat{#1}}}%
    \addtolength{\dhatheight}{-0.3ex}%
    \hat{\vphantom{\rule{1pt}{\dhatheight}}%
    \smash{\hat{#1}}}}
\newlength{\dbarheight}
\newcommand{\doublebar}[1]{%
    \settoheight{\dbarheight}{\ensuremath{\bar{#1}}}%
    \addtolength{\dbarheight}{-0.15ex}%
    \bar{\vphantom{\rule{1pt}{\dbarheight}}%
    \smash{\bar{#1}}}}
\newcommand{\rinftyz}{\ensuremath{\bR^{\infty,0}}}
\newcommand{\rinftym}{\ensuremath{\bR^{\infty,-1}}}
\newcommand{\rinftymt}{\ensuremath{\bR^{\infty,-2}}}
\newcommand{\colhat}{\ensuremath{\mathrm{c}\hat{\mathrm{o}}\mathrm{l}}}
\newcommand{\coldoublehat}{\ensuremath{\mathrm{c}\doublehat{\mathrm{o}}\mathrm{l}}}
\begin{document}
\title{\large\bfseries Stability for moduli spaces of manifolds with conical singularities \vspace{-1ex}}
\author{\small Martin Palmer\quad $/\!\!/$\quad \todaysdate\vspace{-1ex}}
\date{}
\maketitle
{
\makeatletter
\renewcommand*{\BHFN@OldMakefntext}{}
\makeatother
\footnotetext{2010 \textit{Mathematics Subject Classification}: 55R40, 55R80, 57N20, 57S05, 58B05.}
\footnotetext{\textit{Key words}: Moduli spaces of submanifolds, diffeomorphism groups, configuration spaces, homological stability, twisted coefficients, polynomial functors, manifolds with singularities, parametric connected sum.}
}
\begin{abstract}
The homology of configuration spaces of point-particles in manifolds has been studied intensively since the 1970s; in particular it is known to be \emph{stable} if the underlying manifold is connected and open. Closely related to configuration spaces are \emph{moduli spaces of manifolds with marked points}, and in \cite{Tillmann2016Homologystabilitysymmetric} this relation was used to show that the homology of these moduli spaces is also stable as the number of marked points goes to infinity.

Since a disc neighbourhood of a marked point may be viewed as the cone on its boundary sphere, we may think of marked points as inessential conical singularities in the underlying manifold. In this paper, we prove stability for the homology of \emph{moduli spaces of manifolds with conical singularities}, allowing more general -- \emph{essential} -- singularities.

This is deduced as a special case of a more general homological stability result for classifying spaces of \emph{symmetric diffeomorphism groups} of manifolds, with respect to \emph{parametric connected sum}, an operation generalising ordinary connected sum and surgery (including Dehn surgery).

The key input for the proof is stability for the homology of \emph{moduli spaces of disconnected submanifolds} \cite{Palmer2018HomologicalstabilitymoduliI}, which generalises configurations of point-particles to configurations of higher-dimensional submanifolds. Along the way, we extend this result further to homology with polynomially twisted coefficients and configurations of labelled submanifolds.
\end{abstract}

%%%%%%%%%%%%%%%%%%%%%%%%%%%%%%%%%%%%%%%%%%%%%%%%%%%%%%%%%%%%%%%%%%%%%%%%%%%%%%%%%%
%%%%%%%%%%%%%%%%%%%%%%%%%%%%%%%%%%%%%%%%%%%%%%%%%%%%%%%%%%%%%%%%%%%%%%%%%%%%%%%%%%

\section{Introduction}

Let $M$ be a connected manifold with $\partial M \neq \varnothing$. As the number of points $n\to\infty$, the homology of the unordered configuration spaces $C_n(M)$ \emph{stablises}: there are maps $C_n(M) \to C_{n+1}(M)$ inducing isomorphisms on integral homology in a diverging range of degrees. This was proved first for the plane $M = \bR^2$ by Arnol'd~\cite{Arnold1970Certaintopologicalinvariants} and in general by McDuff and Segal~\cite{Segal1973Configurationspacesand,McDuff1975Configurationspacesof,Segal1979topologyofspaces}. This has been extended to \emph{labelled} configuration spaces $C_n(M;X)$, where the configuration points are equipped with labels in a path-connected space $X$ \cite{Randal-Williams2013Homologicalstabilityunordered}, as well as homology with \emph{polynomially twisted} coefficients \cite{Palmer2018Twistedhomologicalstability}.

Closely related, there are \emph{moduli spaces of manifolds with marked points}. Formally, these are the classifying spaces $B\mathrm{Diff}(M,\{x_1,\ldots,x_n\})$ of the groups of diffeomorphisms of $M$ fixing a given set of $n$ points in its interior, and they may be realised more concretely as the spaces of submanifolds of $\bR^\infty$ that are diffeomorphic to $M$ and equipped with an $n$-point configuration in their interiors. Stability for the homology of these moduli spaces was proven in \cite{Tillmann2016Homologystabilitysymmetric} using their connection to labelled configuration spaces on $M$.

We may think of marked points in $M$ as \emph{conical singularities} by writing a disc neighbourhood of each marked point as the cone on its boundary sphere -- see the left-hand side of Figure \ref{fig:conical-singularities} on the next page. However, the cone on a sphere is still a manifold, so these singularities are ``inessential''. More general conical singularities of an $m$-dimensional manifold are points where the manifold is locally homeomorphic to the cone on an $(m-1)$-dimensional manifold $L$, the \emph{link} of the singularity. Hence it, in fact, fails to be a manifold at these points unless $L \cong S^{m-1}$. See the right-hand side of Figure \ref{fig:conical-singularities} for an illustration of a conical singularity whose link is a $2$-torus.

In this paper we prove stability, as the number of singularities goes to infinity, for the homology of \emph{moduli spaces of manifolds with conical $L$-singularities}, where $L = \partial T$ is the boundary of a tubular neighbourhood of a closed submanifold $P \subset M$ of sufficiently high codimension. These are the classifying spaces $B\mathrm{Diff}^L(\mathbf{M}_n)$ of the $L$-diffeomorphism groups of the singular manifolds $\mathbf{M}_n$ obtained by collapsing $n$ tubular neighbourhoods of copies of $P$ down to $n$ conical singularities. Note that taking $P = \text{point}$ recovers the case of marked points (inessential singularities).

This will turn out to be a special case of our main result: stability for the homology of the classifying spaces of
\begin{equation}
\label{eq:sigmadiff}
\Sigma\mathrm{Diff}(M \sharp_P N \sharp_P \cdots \sharp_P N),
\end{equation}
where $\sharp_P$ denotes \emph{parametric connected sum} along a submanifold $P$ and the \emph{symmetric diffeomorphism group} $\Sigma\mathrm{Diff}$ is, roughly, the group of diffeomorphisms that preserve this iterated parametric connected sum decomposition. Parametric connected sum is a natural generalisation of connected sum, and one may encode surgery and, in dimension $3$, Dehn surgery as the operation $- \sharp_P N$ by taking $P$ and $N$ to be appropriate spheres or lens spaces.

The essential input for our proofs, as in the case of marked points, is stability for the homology of labelled configuration spaces -- except that we consider \emph{configurations of disconnected submanifolds} of $M$ rather than points, and the labels of the components take values in a bundle over a space of embeddings into $M$. Without labels (and with a bound on the dimension of the submanifolds), this was proven by the author in \cite{Palmer2018HomologicalstabilitymoduliI}. In order to apply it to prove stability for \eqref{eq:sigmadiff} (and thus for moduli spaces of manifolds with conical singularities), we extend this stability result to configurations of disconnected submanifolds \emph{with labels}. To do this, we first extend it to homology with \emph{polynomially twisted coefficients} -- a higher-dimensional analogue of \cite{Palmer2018Twistedhomologicalstability}.

A diagram illustrating this sequence of implications, starting from \cite{Palmer2018HomologicalstabilitymoduliI} and \cite{Palmer2018Twistedhomologicalstability}, is given on page \pageref{diagram-implications} below.

In the remainder of this introduction, we describe parametric connected sum in more detail (\S\ref{ss:parametric-connected-sum}), state precisely our main stability results (Theorems \ref{tmain}, \ref{tlabelled}, \ref{ttwisted} and Corollary \ref{coro-singularities}) and then briefly discuss the stable homology (\S\ref{ss:stable-homology}) and analogues of our results for mapping class groups (\S\ref{ss:mcg}).

\begin{figure}[t]
\centering
\includegraphics[scale=1]{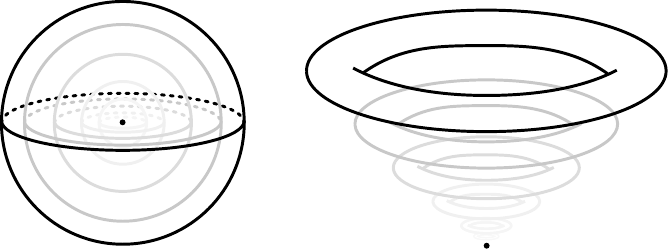}
\caption{A marked point viewed as an inessential singularity given by the cone on a codimension-$1$ sphere; an essential singularity given by the cone on a torus.}
\label{fig:conical-singularities}
\end{figure}

\subsection{Parametric connected sum}
\label{ss:parametric-connected-sum}

\begin{defn}\label{d:param-connected-sum}
Given two smooth embeddings $f \colon L \hookrightarrow M$ and $g \colon L \hookrightarrow N$ with isomorphic normal bundles, we may form the \emph{parametric connected sum} of $M$ and $N$ along $L$ as follows. It depends also on the choice of a bundle isomorphism $\theta \colon \nu(f) \cong \nu(g)$ between the two normal bundles.

First choose a metric on the vector bundle $\nu(f) \to L$ and tubular neighbourhoods $\bar{f} \colon \nu(f) \hookrightarrow M$ and $\bar{g} \colon \nu(g) \hookrightarrow N$. (This is a contractible choice.) For a vector bundle $E$ with a metric and an interval $I \subseteq [0,\infty)$, write $E_I$ for the subbundle consisting of all vectors with norm in $I$.

The parametric connected sum $M \sharp_L N$ is formed by removing the neighbourhood $\bar{f}(\nu(f)_{[0,1]})$ of $f(L)$ from $M$ and the neighbourhood $\bar{g}(\nu(g)_{[0,1]})$ of $g(L)$ from $N$ and then gluing together the slightly larger neighbourhoods $\bar{f}(\nu(f)_{(1,2)})$ and $\bar{g}(\nu(g)_{(1,2)})$ by $\theta \circ \sigma$, where $\sigma$ is the involution of $\nu(f)_{(1,2)}$ that acts radially on each fibre, sending vectors of norm $r$ to vectors of norm $3-r$. This may be written as a pushout diagram:
\begin{center}
\begin{tikzpicture}
[x=1mm,y=1mm]
\node (l) at (0,0) {$\nu(f)_{(1,2)}$};
\node (t) at (40,8) {$M \smallsetminus \bar{f}(\nu(f)_{[0,1]})$};
\node (b) at (40,-8) {$N \smallsetminus \bar{g}(\nu(g)_{[0,1]})$};
\node (r) at (80,0) {$M \sharp_L N$};
\inclusion{above}{$\bar{f} \circ \sigma$}{(l.10)}{(t.west)}
\inclusion{below}{$\bar{g} \circ \theta$}{(l.350)}{(b.west)}
\incl{(t.east)}{(r.170)}
\incl{(b.east)}{(r.190)}
\end{tikzpicture}
\end{center}
When $L$ is a point, this recovers the usual definition of connected sum of two manifolds.
\end{defn}

\begin{eg}\label{eg:surgery}
Take $g$ to be the standard inclusion of $S^p$ into $S^m$, for $p<m$ and let $M$ have dimension $m$. Since $\nu(g)$ is trivial, an embedding $f \colon S^p \hookrightarrow M$ together with a bundle isomorphism $\theta$ as in Definition \ref{d:param-connected-sum} is the same thing as a \emph{framed} embedding. The parametric connected sum of $M$ with $S^m$ along $S^p$ is the result of $p$-surgery along this framed embedding.
\end{eg}

\begin{eg}\label{eg:Dehn-surgery}
Take $N$ to be the lens space $L(p,q)$, thought of as the union of two solid tori along an appropriate identification of their boundaries, and take $g \colon S^1 \hookrightarrow L(p,q)$ to be the inclusion of the core of one of these solid tori. As in the previous example, $\nu(g)$ is trivial. Given a framed knot in a $3$-manifold $M$, the parametric connected sum of $M$ with $L(p,q)$ along this knot is the result of Dehn surgery of slope $p/q$ along this knot.
\end{eg}

Apart from these ubiquitous examples of surgery and Dehn surgery, instances of the more general parametric connected sum have also appeared numerous times before in the literature. For example, the notion of ``connected sum along a $k$-skeleton'' was used by Kreck~\cite[pp.\ 25--26]{Kreck1985Surgery} (see also \cite[\S 5 and \S 6]{Kreck2016Classificationofmanifolds}) to give a geometric definition of the group structure on the set of \emph{$2n$-manifolds with normal $(n-1)$-smoothing} up to stable diffeomorphism. This uses Wall's theory of \emph{thickenings} \cite{Wall1966Thickenings}, which allows one, under certain conditions, to approximate a map from a CW-complex to a manifold $M$ by a homotopy equivalence onto a compact, codimension-zero submanifold of $M$; these approximations play a role analogous to that of the tubular neighbourhoods in Definition \ref{d:param-connected-sum} above. A version of the parametric connected sum in a context where all manifolds are equipped with embeddings into a fixed Euclidean space was defined in \cite[page 264]{Skopenkov2007newinvariantparametric}; this is also where the name \emph{parametric connected sum} seems to have been first introduced. More details and further references may be found at \cite{MAP-Parametricconnectedsum}.

\subsection{Results}\label{ss:results}

Let $M$ be a connected $m$-manifold with boundary and $P$ a closed $p$-manifold. Choose embeddings $P \hookrightarrow \partial M$ and $P \hookrightarrow N$, where $N$ is another compact $m$-manifold. Assume that the normal bundles of these embeddings (after pushing the first one into the interior of $M$) are isomorphic, and choose an isomorphism between them. We may then define
\[
M \underset{nP}{\sharp} nN
\]
to be the result of performing $n$ parametric connected sum operations, each with a different copy of $N$, along pairwise disjoint embeddings of $P$ into a collar neighbourhood of $M$, parallel to the chosen embedding into its boundary. (See Definition \ref{d:param-conn-sum}.)

\begin{defn}[{\itshape Informal}]
A diffeomorphism of this manifold is called \emph{symmetric} if it preserves, setwise, the decomposition into (a) $M$ minus $n$ tubular neighbourhoods of $P$ and (b) $n$ copies of $N$ minus a tubular neighbourhood of $P$. Moreover, on the intersection of (a) and (b), it must act by fibrewise diffeomorphisms of the tubular neighbourhood (and permutations). We also require it to act by the identity on a neighbourhood of $\partial M$.

Write $\mathrm{Diff}_{\mathrm{fib}}(T)$ for the group of fibrewise diffeomorphisms of the tubular neighbourhood (normal bundle) $T \to P$, namely those diffeomorphisms of $T$ that cover some diffeomorphism of $P$. We may more generally fix a subgroup $H \leq \mathrm{Diff}_{\mathrm{fib}}(T)$ and say that a diffeomorphism is \emph{$H$-symmetric} if it acts by $H$ (and permutations) on the intersection of (a) and (b). We write
\[
\Sigma_H \mathrm{Diff}(M \underset{nP}{\sharp} nN)
\]
for the group of $H$-symmetric diffeomorphisms, and we omit the subscript $H$ if it is the whole group $\mathrm{Diff}_{\mathrm{fib}}(T)$. See Definition \ref{d:symm-diff-group} for the precise definition.
\end{defn}

Since $M$ has boundary, and symmetric diffeomorphisms are required to fix a neighbourhood of it pointwise, there is an inclusion
\[
M \underset{nP}{\sharp} nN \lhto M \underset{(n+1)P}{\sharp} (n+1)N,
\]
given by extending the collar neighbourhood of $\partial M$ and performing another parametric connected sum with $N$ along an embedding of $P$ into this extension, and a homomorphism
\begin{equation}\label{eq:stab-hom}
\Sigma_H \mathrm{Diff}(M \underset{nP}{\sharp} nN) \longrightarrow \Sigma_H \mathrm{Diff}(M \underset{(n+1)P}{\sharp} (n+1)N),
\end{equation}
given by extending symmetric diffeomorphisms by the identity. Our main result will hold under the following hypotheses (see also \S\ref{s:hs}):

\begin{itemizeb}
\item[(a)] The dimensions $p$ and $m$ satisfy $p \leq \tfrac12(m-3)$.
\item[(b)] Let $z \colon \mathrm{Diff}_{\mathrm{fib}}(T) \to \mathrm{Diff}(P)$ be the homomorphism that sends a fibrewise diffeomorphism to the diffeomorphism of $P$ that it covers. Then $z(H)$ is open in $\mathrm{Diff}(P)$ and $H \cap \mathrm{ker}(z)$ is closed in $\mathrm{ker}(z)$. Moreover, we assume that the coset space $\mathrm{ker}(z)/(H \cap \mathrm{ker}(z))$ is path-connected.
\item[(c)] Every diffeomorphism in $H$ (thought of as a diffeomorphism of a tubular neighbourhood of $P$ in $N$) extends to the whole manifold $N$.
\end{itemizeb}

We will discuss these hypotheses more in \S\ref{ss:hypotheses}, but first we state the main results of the paper.

\begin{rthm}[\ref{tmain}] \textup{[page \pageref{tmain}]}
Under these hypotheses, the homomorphism \eqref{eq:stab-hom} induces isomorphisms on homology up to degree $\tfrac{n}{2} - 1$ and split-injections in all degrees.
\end{rthm}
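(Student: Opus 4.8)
The plan is to reinterpret $B\Sigma_H\mathrm{Diff}(M\underset{nP}{\sharp}nN)$ as a moduli space of disconnected \emph{labelled} copies of $P$ in (the interior of, with its collar extended) $M$, and then to deduce homological stability for such moduli spaces from the untwisted result of Part~I by passing through \emph{twisted} stability with finite-degree coefficients. For the first step I would, for each $n$, restrict an $H$-symmetric diffeomorphism to the ``$M$-part'' and to the $n$ ``$N$-parts'' and record the resulting configuration of $n$ mutually isotopic embedded copies of $P$ together with the gluing data along their normal bundles. This presents $B\Sigma_H\mathrm{Diff}(M\underset{nP}{\sharp}nN)$, up to weak equivalence, as the total space of a fibre bundle over a moduli space of $n$ embedded copies of $P$ parametrised modulo $G:=z(H)$ -- which is open in $\mathrm{Diff}(P)$ by hypothesis~(b), so that $C_{nP}(-;G)$ is the sort of moduli space treated in Part~I. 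The fibre over a configuration is an $n$-fold product of a classifying space $F$ assembled from the diffeomorphisms of $N$ fixing the gluing region and from the group $H$: hypothesis~(c) is precisely what makes $F$ non-empty (every element of $H$ is realised by a diffeomorphism of $N$), and the path-connectedness of $\mathrm{ker}(z)/(H\cap\mathrm{ker}(z))$ in hypothesis~(b) is what makes $F$ \emph{connected}. Carrying this out -- tracking the collars, the ``slightly larger'' gluing neighbourhoods of Definition~\ref{d:param-connected-sum}, the closedness conditions in~(b) needed to keep the relevant coset spaces Fréchet-smooth, and the compatibility of \eqref{eq:stab-hom} with the standard ``add a copy near the boundary'' stabilisation map -- identifies $B\Sigma_H\mathrm{Diff}(M\underset{nP}{\sharp}nN)$ with a labelled moduli space $C_{nP}(M;G;\tau)$, where $\tau$ is a fibration over the embedding space of $P$ in $M$ with fibre $F$, and reduces the theorem to homological stability for the spaces $C_{nP}(M;G;\tau)$. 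Hypothesis~(a) is exactly the dimension bound $p\leq\tfrac12(m-3)$ inherited from Part~I.

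Next I would deduce stability for $C_{nP}(M;G;\tau)$ from \emph{twisted} stability for $C_{nP}(M;G)$. The Leray--Serre spectral sequence of the bundle $C_{nP}(M;G;\tau)\to C_{nP}(M;G)$, whose fibre is the $n$-fold product $F^{\times n}$, computes $H_*(C_{nP}(M;G;\tau))$ from $H_*\bigl(C_{nP}(M;G);\mathcal{H}_n\bigr)$, where $\mathcal{H}_n=H_*(F^{\times n})$ is a $\pi_1(C_{nP}(M;G))$-module built by Künneth from tensor powers of $\widetilde{H}_*(F)$ (the monodromy including the permutation action on the tensor factors). Because $F$ is connected with degreewise finitely generated homology -- the latter arranged by choosing a good model for the $N$-diffeomorphisms -- $\widetilde{H}_*(F)$ is concentrated in positive degrees, so in each fixed total degree only finitely many summands $\widetilde{H}_*(F)^{\otimes k}$ contribute; each such tensor-power coefficient system has finite degree in the sense of the stability machinery, hence so does $\{\mathcal{H}_n\}_n$. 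Granting twisted stability with such coefficients, every entry of the $E^2$-page stabilises in a range linear in $n$, and a Zeeman-type comparison of spectral sequences propagates this to the abutment with the asserted slope $\tfrac{n}{2}-1$ and the split injectivity.

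The technical heart is then twisted homological stability for $C_{nP}(M;G)$ with finite-degree coefficients, taking the untwisted result of Part~I as the only input. Here I would either (i)~exhibit $\{C_{nP}(M;G)\}_n$ as the sequence of classifying spaces of automorphism groups in a homogeneous category equipped with a (pre-)braiding in the sense of Randal-Williams--Wahl, verify the required high connectivity of the associated semisimplicial resolution -- which is essentially the geometric connectivity estimate already established in Part~I to prove the untwisted statement -- and invoke their general theorem, which simultaneously yields the twisted range and the all-degrees split injectivity; or (ii)~re-run the spectral-sequence argument of Part~I with the twisted coefficients inserted, inducting on the polynomial degree of the coefficient system via the short exact sequences that lower it, with the untwisted Part~I result as the base case. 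Either route is fairly standard bootstrapping once the resolution and its connectivity are in hand. I expect the genuine obstacle to lie not here but in the first, geometric step: constructing the label fibration $\tau$ correctly, checking that hypotheses~(b) and~(c) are exactly what force the equivalence $B\Sigma_H\mathrm{Diff}(M\underset{nP}{\sharp}nN)\simeq C_{nP}(M;G;\tau)$ and the connectedness of $F$, and verifying -- with the Fréchet topologies on all the diffeomorphism groups handled carefully -- that the resulting coefficient system really does have finite degree.
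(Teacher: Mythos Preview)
Your overall strategy---reduce to labelled moduli spaces, then to twisted stability for $C_{nP}(M;G)$, then invoke Part~I---matches the paper's chain of implications exactly, and the second and third steps are essentially as you describe (the paper's twisted-stability argument is closer to your option~(ii), proceeding via a decomposition of the coefficient system and Shapiro's lemma rather than re-running the full Part~I resolution, but either route works).

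There is, however, a genuine gap in the first geometric step. The identification $B\Sigma_H\mathrm{Diff}(M\underset{nP}{\sharp}nN)\simeq C_{nP}(M;G;\tau)$ is simply false: the left-hand side carries information about the ambient diffeomorphism group $\mathrm{Diff}_\partial(M)$ that no labelled configuration space can see. Concretely, take $P$ a point and $N=S^m$, so $\Sigma_H\mathrm{Diff}(M\underset{nP}{\sharp}nN)$ is the group of diffeomorphisms of $M$ permuting $n$ marked framed points; its classifying space is the Borel construction $E\mathrm{Diff}_\partial(M)\times_{\mathrm{Diff}_\partial(M)}C_n(\mathring{M})$, which fibres over $B\mathrm{Diff}_\partial(M)$ with fibre $C_n(\mathring{M})$ and is not of the form $C_n(M;\tau)$ when $B\mathrm{Diff}_\partial(M)$ is non-trivial (e.g.\ $M=D^m$ for large $m$). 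Equivalently: in the fibre bundle you propose, the fibre over a configuration is not the product $F^{\times n}$ but rather $F^{\times n}$ times the classifying space of diffeomorphisms of $M$ fixing that configuration---and this last factor depends on $n$, so the coefficient system it produces is not of finite degree. The fix, which is what the paper does, is to insert one more fibration: one first constructs a fibre bundle $\Psi\colon B\Sigma_H\mathrm{Diff}(M\underset{nP}{\sharp}nN)\to B\mathrm{Diff}_\partial(M)$ (recording only the $M$-part) whose \emph{fibre} is the labelled moduli space $C_{nP}(M,Z;G)$. The stabilisation map~\eqref{eq:stab-hom} covers a self-homotopy-equivalence of the base $B\mathrm{Diff}_\partial(M)$ (it just extends the collar), so a Zeeman comparison over this fixed base reduces stability for the total spaces to stability for the fibres---and \emph{that} is your labelled-moduli-space statement. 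Hypotheses~(b) and~(c) then enter exactly where you say, but at the level of the label fibration $Z\to E$, not at the level of $B\Sigma_H\mathrm{Diff}$ directly.

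A smaller point: your assumption that $F$ has degreewise finitely generated homology (``arranged by choosing a good model'') is neither justified nor needed. The paper instead runs the K{\"u}nneth/finite-degree argument first with field coefficients, where flatness is automatic, obtaining the range $\tfrac{n}{2}$, and then bootstraps to integral coefficients via the standard short exact sequences of coefficient groups, losing one degree to $\tfrac{n}{2}-1$. This is why the integral and field ranges differ by one.
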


\begin{note}
When we speak of the \emph{homology} of a diffeomorphism group, we will always mean the homology of its classifying space (as a topological group, equipped with the Whitney $C^\infty$ topology).
\end{note}

In \S\ref{s:conical-singularities} we reinterpret a special case of this as a result about diffeomorphism groups of manifolds with conical singularities. Write $\partial T$ for the boundary of the disc bundle $T \to P$, in other words, the unit sphere bundle of the normal bundle of our chosen embedding of $P$ into $M$. The group of fibrewise diffeomorphisms of $T$ is naturally a subgroup of $\mathrm{Diff}(\partial T)$, and therefore so is $H$. In \S\ref{ss:rel-symm-diff} we construct a sequence $\mathbf{M}_n$ of manifolds with conical singularities of ``type'' $\partial T$ by collapsing tubular neighbourhoods of embedded copies of $P$ in $M$. There is an inclusion $\mathbf{M}_n \hookrightarrow \mathbf{M}_{n+1}$, which may be thought of as adjoining a new singularity of type $\partial T$, and a homomorphism
\begin{equation}\label{eq:stab-hom-sing}
\mathrm{Diff}_H^{\partial T}(\mathbf{M}_n) \longrightarrow \mathrm{Diff}_H^{\partial T}(\mathbf{M}_{n+1})
\end{equation}
given by extending diffeomorphisms by the identity. The notation $\mathrm{Diff}_H^{\partial T}(\phantom{-})$ means the group of diffeomorphisms of a manifold with conical $\partial T$-singularities that act by the cone on $H$ near each singularity. See Definition \ref{d:diff-manifold-with-conical-sing} for more precise details.

\begin{rcoro}[\ref{coro-singularities}] \textup{[page \pageref{coro-singularities}]}
Under the hypotheses \textup{(a)} and \textup{(b)} above, the homomorphism \eqref{eq:stab-hom-sing} induces isomorphisms on homology up to degree $\tfrac{n}{2} - 1$ and split-injections in all degrees.
\end{rcoro}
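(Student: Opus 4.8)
The plan is to deduce the corollary from Theorem~\ref{tmain} by recognising $\mathrm{Diff}_H^{\partial T}(\mathbf{M}_n)$ as an instance of a symmetric diffeomorphism group. Recall (see \S\ref{ss:rel-symm-diff}) that $\mathbf{M}_n$ is obtained from $M$ by excising $n$ disjoint, mutually isotopic tubular neighbourhoods of $P$ and replacing each of them by the closed cone $c(\partial T)$ on its boundary sphere bundle $\partial T$. This is exactly the cut-and-paste recipe of Definition~\ref{d:param-conn-sum} in which the plug is the cone $c(\partial T)$ rather than $N \smallsetminus T$ for a genuine smooth manifold $N$ -- equivalently, in which the plug is the compact space $c(\partial T) \cup_{\partial T} T$ carrying a single isolated conical singularity. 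Under this identification an $H$-symmetric diffeomorphism of $M \underset{nP}{\sharp} nN$ in the sense defined above -- one acting by permutations and by $H \leq \mathrm{Diff}_{\mathrm{fib}}(T) \leq \mathrm{Diff}(\partial T)$ on the interface region -- is exactly a diffeomorphism of $\mathbf{M}_n$ that is the cone on an element of $H$ near each singular point, i.e.\ an element of $\mathrm{Diff}_H^{\partial T}(\mathbf{M}_n)$ (Definition~\ref{d:diff-manifold-with-conical-sing}); and the stabilisation maps \eqref{eq:stab-hom} and \eqref{eq:stab-hom-sing} match. So it suffices to check that the hypotheses of Theorem~\ref{tmain} hold for this (singular) plug, given only (a) and (b).

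Hypothesis (a) is literally unchanged. Hypothesis (b) refers only to $H$, the homomorphism $z$ and $\mathrm{ker}(z)$, with no reference to the plug, so it is unchanged too. The remaining hypothesis (c) -- that every element of $H$, viewed as a fibrewise diffeomorphism of a tubular neighbourhood of $P$ inside the plug, extends over the whole plug -- is the only one involving the plug, and it is automatic here: for $h \in H \leq \mathrm{Diff}(\partial T)$ the radial cone $c(h)\colon c(\partial T) \to c(\partial T)$, fixing the cone point and covering $h$ on each level $\partial T \times \{r\}$, is a morphism of spaces with conical singularities extending $h$, and these fit together to a self-map of $c(\partial T) \cup_{\partial T} T$. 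This is precisely why the corollary needs only (a) and (b), while Theorem~\ref{tmain} also assumes (c).

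The real content of the proof is therefore not the implication ``Theorem~\ref{tmain} $\Rightarrow$ corollary'' but the verification that Theorem~\ref{tmain} -- and hence the chain of results it rests on: homological stability for moduli spaces of labelled submanifolds, twisted homological stability for $C_{nP}(M;G)$, and ultimately the untwisted theorem of Part~I -- remains valid when the plug is allowed to carry an isolated conical singularity. The key point, to be made in \S\ref{ss:rel-symm-diff}, is that the plug enters the argument only through the interface group $H$ and through hypothesis (c): the link between $B\Sigma_H\mathrm{Diff}(M \underset{nP}{\sharp} nN)$ and the labelled moduli spaces takes the form of a homotopy fibre sequence over $C_{nP}(M;G)$, the moduli space from Part~I, with $G = z(H)$ and the labels encoding $H \cap \mathrm{ker}(z)$ as in hypothesis (b); its base is insensitive to the plug, and its fibre is controlled by the diffeomorphisms of the plug rel $\partial T$ together with (c). Both ingredients, and the connectivity estimates attached to them, make sense verbatim for $N = c(\partial T) \cup_{\partial T} T$, so Theorem~\ref{tmain} applies and yields that \eqref{eq:stab-hom-sing} is an isomorphism on homology up to degree $\tfrac{n}{2} - 1$ and a split injection in all degrees. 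The main obstacle is thus expository rather than mathematical: pinning down the category of manifolds with conical $\partial T$-singularities and the group $\mathrm{Diff}_H^{\partial T}(\phantom{-})$ carefully enough that ``run the proof of Theorem~\ref{tmain}'' is a legitimate argument rather than a slogan.
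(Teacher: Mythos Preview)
Your argument is plausible but takes a harder route than the paper, and the detour is unnecessary. You view $\mathbf{M}_n$ as a parametrised connected sum with a \emph{singular} plug $c(\partial T)\cup_{\partial T} T$ and then propose to check that the machinery behind Theorem~\ref{tmain} survives when $N$ is allowed a conical singularity. The paper avoids this entirely. Its key observation (Lemma~\ref{l:relate-sing-symm}) is that $\mathrm{Diff}_H^{\partial T}(\mathbf{M}_n)$ is canonically isomorphic to the boundary-permuting group $\Sigma_H\mathrm{Diff}(M,nT)$: a diffeomorphism of $\mathbf{M}_n$ is forced to be $\mathrm{cone}(h)$ on each conical neighbourhood, hence is determined by (and determines) its restriction to $M_\Phi = M \smallsetminus \Phi(n\mathring{T})$, which is precisely an element of $\Sigma_H\mathrm{Diff}(M,nT)$ via Remark~\ref{r:mphi}. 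But $\Sigma_H\mathrm{Diff}(M,nT)$ is already the bottom row of the square~\eqref{eq:symdiff-square}, and by Remark~\ref{r:canonical-heq} it is the special case of $\Sigma_H\mathrm{Diff}(M \underset{nP}{\sharp} nN)$ with the \emph{smooth} plug $N = D_2(\nu_i)$, for which hypothesis~(c) is trivial (extend a fibrewise linear map of $D(\nu_i)$ linearly to $D_2(\nu_i)$). So Theorem~\ref{tmain} applies as stated, and the corollary is literally immediate.

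Your approach would also work, but it obliges you to re-examine every construction in \S\ref{s:proof-symm-diff-groups} --- the embedding spaces $\mathrm{Emb}(N',\rinftymt)$, the groups $\mathrm{Diff}_K(N')$ and $\mathrm{Diff}_H(N)$, the path-connectedness arguments in Lemma~\ref{l:path-connected} and Proposition~\ref{p:identify-fibres} --- for a non-manifold $N$. That is exactly the ``expository rather than mathematical'' burden you flag at the end, and the paper's identification with $\Sigma_H\mathrm{Diff}(M,nT)$ sidesteps it completely by discarding the cones rather than carrying them through the argument.
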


These are proved as corollaries of an extension of the main theorem of \cite{Palmer2018HomologicalstabilitymoduliI}. Let $G$ be a subgroup of $\mathrm{Diff}(P)$ and let $C_{nP}(M;G)$ be the space whose points consist of an unordered collection of pairwise disjoint submanifolds in the interior of $M$ that are each diffeomorphic to $P$ and parametrised modulo $G$, and such that the whole collection is isotopic to a standard collection of parallel copies of $P$ in a collar neighbourhood of $\partial M$. The main theorem of \cite{Palmer2018HomologicalstabilitymoduliI} is that this sequence is homologically stable as $n\to\infty$, as long as:

\begin{itemizeb}
\item[(a)] The dimensions $p$ and $m$ satisfy $p \leq \tfrac12(m-3)$.
\item[(b*)] The subgroup $G \leq \mathrm{Diff}(P)$ is open.
\end{itemizeb}

In this paper we extend this result to moduli spaces of \emph{labelled} disconnected submanifolds. Let $Z$ be a right $G$-space and $\pi \colon Z \to \mathrm{Emb}(P,M)$ a $G$-equivariant Serre fibration with path-connected fibres (plus some auxiliary data; see \S\ref{s:mod-labelled} for the precise definitions). Then $C_{nP}(M,Z;G)$ is the space of submanifolds of the interior of $M$ (as above) that are equipped with labels in the appropriate fibres of $Z/G$.

\begin{rthm}[\ref{tlabelled}] \textup{[page \pageref{tlabelled}]}
There are natural stabilisation maps $C_{nP}(M,Z;G) \to C_{(n+1)P}(M,Z;G)$ that induce split-injections on homology in all degrees, and -- under the hypotheses \textup{(a)} and \textup{(b*)} -- isomorphisms up to degree $\tfrac{n}{2} - 1$.
\end{rthm}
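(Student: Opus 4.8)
The plan is to compute the homology of $C_{nP}(M,Z;G)$ by a Serre spectral sequence over the unlabelled moduli space $C_{nP}(M;G)$, reducing the theorem to a \emph{twisted} homological stability statement for the latter with polynomial (finite-degree) coefficients, which is in turn bootstrapped from the untwisted result of Part~I \cite{Palmer2018HomologicalstabilitymoduliI}.

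First I would introduce the label-forgetting map $\rho_n \colon C_{nP}(M,Z;G) \to C_{nP}(M;G)$. Since $\pi \colon Z \to \Emb(P,M)$ is a $G$-equivariant Serre fibration, $\rho_n$ is a Serre fibration, and its fibre over a collection $\{Q_1,\dots,Q_n\}$ is the product $\prod_{i=1}^{n} \bar F_i$ of the corresponding fibres of the induced fibration $Z/G \to \Emb(P,M)/G$. The standing hypothesis that $\pi$ has path-connected fibres makes each $\bar F_i$, hence each fibre of $\rho_n$, path-connected, so $H_0$ of the fibre is $\bZ$. In the resulting spectral sequence
\[
E^2_{p,q} = H_p\bigl(C_{nP}(M;G);\,\underline{H_q}\bigr) \;\Longrightarrow\; H_{p+q}\bigl(C_{nP}(M,Z;G)\bigr),
\]
the local coefficient system $\underline{H_q}$ is, by Künneth, built from $\bigoplus_{q_1+\dots+q_n=q}\bigotimes_{i} H_{q_i}(\bar F_i)$, in which the tensor factors with $q_i=0$ contribute only the unit; thus every class involves at most $q$ of the $n$ components, and $\underline{H_q}$ is a polynomial coefficient system of degree $\le q$. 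The stabilisation maps extend to a map of fibrations $\rho_n \to \rho_{n+1}$ --- one uses a fixed basepoint in the (non-empty, path-connected) standard fibre as the label of the newly adjoined copy of $P$ --- hence to a map of spectral sequences.

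The main input is a twisted homological stability theorem for $C_{nP}(M;G)$: for any polynomial coefficient system $T$ of degree $d$, the stabilisation map induces an isomorphism on $H_*(-;T)$ for $* \le \tfrac12(n-d) - 1$ and a split injection in all degrees. I would prove this by dévissage on $d$, using the untwisted Theorem of Part~I as the case $d=0$: a degree-$d$ polynomial system sits in a short exact sequence involving a degree-$(d-1)$ system and a system \emph{induced} from a constant system on a moduli space with one fewer movable copy of $P$; the homology of $C_{nP}(M;G)$ with such an induced system is identified, Shapiro-style, with the constant-coefficient homology of a moduli space in the same family, to which the untwisted result applies; and a long-exact-sequence argument, tracking the degree shifts, propagates the range. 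This bootstrapping --- together with setting up the appropriate symmetric monoidal framework (a category of configurations of parallel copies of $P$ modulo $G$, carrying the notions of polynomial and induced coefficient system) and checking that the systems $\underline{H_q}$ above genuinely have degree $\le q$ over $\bZ$, rather than only over a field (which needs a Künneth spectral sequence or a Bockstein argument) --- is the step I expect to be the main obstacle.

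Granting this, Theorem~\ref{tlabelled} follows by comparing the spectral sequences above for $n$ and $n+1$: applying the twisted statement to $T = \underline{H_q}$ (of degree $\le q$) shows that the stabilisation map is an isomorphism on $E^2_{p,q}$ whenever $2p+q \le n-2$, in particular throughout the range $p+q \le \tfrac n2 - 1$, and a split injection on $E^2_{p,q}$ in every bidegree. Since all differentials and extensions are compatible with the stabilisation maps, a routine comparison argument then yields that $H_k(C_{nP}(M,Z;G)) \to H_k(C_{(n+1)P}(M,Z;G))$ is an isomorphism for $k \le \tfrac n2 - 1$ and a split injection in all degrees; the latter can equally be obtained directly by transporting Part~I's splitting for $C_{nP}(M;G)$ through $\rho_n$, with the adjoined copy of $P$ carrying the chosen basepoint label.
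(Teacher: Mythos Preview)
Your overall architecture matches the paper's exactly: forget labels to get a Serre fibration over $C_{nP}(M;G)$, identify the fibre homology as a finite-degree coefficient system, feed this into a twisted homological stability theorem for the unlabelled moduli spaces (the paper's Theorem~\ref{ttwisted}), and conclude by spectral sequence comparison. Two points of divergence are worth noting. First, for the twisted result itself you propose a d{\'e}vissage on the degree via short exact sequences of coefficient systems, whereas the paper (following \cite{Palmer2018Twistedhomologicalstability}) instead writes $T(n)$ as a direct sum of induced modules indexed by $k\leq d$ and applies Shapiro's lemma and a fibre-bundle argument once for each summand; both work, and your inductive approach is the more standard one in the recent literature. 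Second, you flag the K{\"u}nneth-over-$\bZ$ issue for showing $\underline{H_q}$ has degree $\leq q$; the paper sidesteps this entirely by first proving the theorem with field coefficients (where K{\"u}nneth is clean) and then bootstrapping to $\bZ$ via the short exact sequences $0\to\bZ/p^r\to\bZ/p^{r+1}\to\bZ/p\to 0$ and $0\to\bZ\to\bQ\to\bQ/\bZ\to 0$, losing one degree in the process --- this is why the integral range is $\tfrac{n}{2}-1$ while the field range is $\tfrac{n}{2}$.

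One genuine gap: your first argument for split-injectivity, via split-injections on the $E^2$ page, does not work as stated --- splittings on $E^2$ need not survive to the abutment, since differentials and extensions can destroy them. Your fallback (``transport Part~I's splitting through $\rho_n$'') is the right idea and is exactly what the paper does: one constructs Dold-style transfer maps $C_{nP}(M,Z;G)\to\mathrm{Sp}^{\binom{n}{k}}(C_{kP}(M,Z;G))$ directly on the labelled spaces by forgetting $n-k$ labelled components in all possible ways, and applies \cite{Dold1962DecompositiontheoremsSn}. This is independent of the spectral sequence and requires no hypotheses on dimensions or on $G$.
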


This, in turn, follows from the fact that the sequence $C_{nP}(M;G)$ of (unlabelled) moduli spaces is homologically stable also with respect to (polynomial) twisted coefficient systems:

\begin{rthm}[\ref{ttwisted}] \textup{[page \pageref{ttwisted}]}
The stabilisation maps $C_{nP}(M;G) \to C_{(n+1)P}(M;G)$ induce split-injections on homology twisted by any polynomial functor $T$. Under the hypotheses \textup{(a)} and \textup{(b*)}, if $T$ has finite degree $d$, they induce isomorphisms on homology twisted by $T$ up to degree $\tfrac{n-d}{2}$.
\end{rthm}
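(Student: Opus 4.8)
The plan is to follow the route pioneered by van der Kallen and developed by Randal-Williams--Wahl: \emph{deduce} twisted stability from the untwisted statement by induction on the polynomial degree $d$ of the coefficient functor $T$, using a highly-connected semi-simplicial resolution of $C_{nP}(M;G)$ as the driving mechanism. The case $d=0$ is a constant coefficient system, where the assertion is precisely the main theorem of Part I \cite{Palmer2018HomologicalstabilitymoduliI} together with the split-injectivity proved there. For the inductive step one uses that the ``derivative'' $\Delta T$ --- the cokernel (or kernel, according to convention) of the natural map $T \to \delta T$ to the once-shifted functor --- is polynomial of degree $d-1$, so the inductive hypothesis applies to it with a range half a degree better; converting this into a statement about $T$ itself is exactly what the resolution is for.

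Concretely, I would first recall from Part I the augmented semi-simplicial space $C_{nP}(M;G)_\bullet \to C_{nP}(M;G)$ whose $p$-simplices parametrise an $n$-component submanifold together with an \emph{ordered} choice of $p+1$ components that can be simultaneously isotoped into standard position inside a collar of $\partial M$; forgetting these $p+1$ components presents the space of $p$-simplices, up to homotopy, as a fibration over $C_{(n-p-1)P}(M;G)$ with \emph{path-connected} fibre (this is where hypothesis (a) enters, ensuring that the pertinent spaces of disjoint embeddings of copies of $P$ are connected), and Part I shows that the realisation of the augmentation is highly connected, the connectivity growing like $\tfrac{n}{2}$. Pulling back along this resolution the local coefficient system determined by $T$ and forming the skeletal spectral sequence of the semi-simplicial space yields a first-quadrant spectral sequence with $E^1_{p,q}=H_q\big(C_{nP}(M;G)_p;T\big)$ which, in a range of total degrees growing linearly in $n$, converges to $H_{p+q}\big(C_{nP}(M;G);T\big)$, the augmentation contributing the column $p=-1$.

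The heart of the matter is the identification of this $E^1$-page. Because the value of $T$ on the $p$-simplices is pulled back along the face-forgetting fibration, and the fibre is not merely connected but --- again by hypothesis (a) --- ``$T$-simple'', one obtains $E^1_{p,q}\cong H_q\big(C_{(n-p-1)P}(M;G);\delta^{p+1}T\big)$, with $d^1$ the alternating sum of the maps induced by the $p+1$ face operators, which on coefficients realise the structure maps $\delta^{p+1}T\to\delta^{p}T$. The key homological-algebra lemma --- the analogue of the corresponding lemma of Randal-Williams--Wahl --- asserts that when $T$ is polynomial of degree $\leq d$ these rows $(E^1_{\ast,q},d^1)$ are, in the stable range, acyclic except at the two ends, where they detect $H_q(C_{nP};T)$ on one side and contributions governed by $\delta T$ and $\Delta T$ at smaller levels $n$ on the other. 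Feeding in the inductive hypothesis for $\Delta T$, the untwisted input for the $\delta$-shifts, and the connectivity of the resolution, and organising everything as a double induction --- lexicographic in $(d,n)$, so that for fixed $d$ one may assume the result at all smaller $n$ --- forces the stabilisation map $H_i\big(C_{nP}(M;G);T\big)\to H_i\big(C_{(n+1)P}(M;G);T\big)$ to be an isomorphism for $i\leq\tfrac{n-d}{2}$, recovering the Part I range when $d=0$ (modulo the usual off-by-one in such conventions).

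Split-injectivity on twisted homology in \emph{all} degrees is independent of the hypotheses (a) and (b*) and, as in the untwisted case in Part I, is a formal consequence of the ``disjoint union into a collar'' operation: this makes $\bigsqcup_{n}C_{nP}(M;G)$ a homotopy-module over an $E_1$-algebra that, after the appropriate group-completion (or scanning) argument, acts freely enough that $H_\ast(C_{nP}(M;G);T)$ retracts off $H_\ast(C_{(n+1)P}(M;G);T)$ naturally in $T$; no degree hypothesis on $T$ is needed for this. I expect the main obstacle to be the homological-algebra lemma above: fixing the correct notion of polynomial degree for coefficient systems over the category indexing the spaces $C_{nP}(M;G)$ --- on which, thanks to hypothesis (a), one should be able to argue as in the symmetric, $\mathrm{FI}$-module-like setting --- verifying that the face-forgetting maps are $T$-simple fibrations so the local system genuinely descends to $C_{(n-p-1)P}(M;G)$, and then threading the double induction so that the numerology reproduces exactly the range $\tfrac{n-d}{2}$.
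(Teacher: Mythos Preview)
Your route is the Randal-Williams--Wahl template: induct on the degree $d$ via the skeletal spectral sequence of a highly-connected semi-simplicial resolution. The paper does something genuinely different, generalising \cite{Palmer2018Twistedhomologicalstability} instead. It decomposes the $\pi_1(C_{nP}(M;G))$-module $T(n)$ directly as a finite sum $\bigoplus_{k=0}^{d}$ of modules induced up from the bicoloured covers $C_{(n-k,k)P}(M;G)$, the summand indexed by $k$ vanishing once $k>\mathrm{deg}(T)$; Shapiro's lemma then rewrites the twisted homology of $C_{nP}(M;G)$ as a sum of twisted homologies of these covers. The forgetful map $C_{(n-k,k)P}(M;G)\to C_{kP}(M;G)$ is a fibre bundle with fibre $C_{(n-k)P}(M';G)$, and the relevant piece $T_k^k$ of the coefficient system becomes \emph{constant} on the fibre, so the untwisted theorem of Part~I applies there and a Serre--Zeeman comparison over the fixed base $C_{kP}(M;G)$ finishes. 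This uses untwisted stability purely as a black box --- the resolution of Part~I is never reopened --- whereas your approach must go back into that resolution, verify it has exactly the shape you assume, and check that the local system descends along the face-forgetting maps (your ``$T$-simple'' step). Both strategies are sound; the paper's buys a short proof at the cost of importing the module-decomposition machinery, yours is more portable but is more work here and leaves the numerology to be threaded through a double induction rather than read off directly.

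One correction: your split-injectivity sketch invokes group-completion and scanning, but those compute the \emph{stable} homology and do not by themselves produce retractions at finite $n$. The paper instead (following \S 7 of \cite{Palmer2018Twistedhomologicalstability}, ultimately Dold's splitting lemma~\cite{Dold1962DecompositiontheoremsSn}) uses explicit ``forget $n-k$ components in all possible ways'' maps $C_{nP}(M;G)\to\mathrm{Sp}^{\binom{n}{k}}(C_{kP}(M;G))$, compatible with the twisted coefficients, to supply the one-sided inverse.
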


See \S\ref{s:ths} for the definition of ``polynomial functor'' (``finite-degree twisted coefficient system'') in this context. We prove this \emph{twisted} homological stability result as a consequence of the \emph{untwisted} homological stability result of \cite{Palmer2018HomologicalstabilitymoduliI}, adapting the techniques of \cite{Palmer2018Twistedhomologicalstability} to do so.

\phantomsection\label{diagram-implications}
In summary, the sequence of implications that we prove is:
\begin{center}
\begin{tikzpicture}
[x=1mm,y=1mm,font=\small]
\node (a) at (-5,0) {\cite{Palmer2018HomologicalstabilitymoduliI}};
\node (b) at (30,0) {Theorem \ref{ttwisted}};
\node (c) at (60,0) {Theorem \ref{tlabelled}};
\node (d) at (90,0) {Theorem \ref{tmain}};
\node (e) at (120,0) {Corollary \ref{coro-singularities}.};
\draw[very thick,->] (a) to node[above]{\S\ref{s:ths} and} node[below]{\cite{Palmer2018Twistedhomologicalstability}} (b);
\draw[very thick,->] (b) to node[above]{\S\ref{s:proof-labelled}} (c);
\draw[very thick,->] (c) to node[above]{\S\ref{s:proof-symm-diff-groups}} (d);
\draw[very thick,->] (d) to node[above]{\S\ref{s:conical-singularities}} (e);
\end{tikzpicture}
\end{center}

\begin{rmk}\label{r:after-linear-diagram}
In the special case where $P$ is a point, Theorem \ref{tmain} is Theorem 1.3 of \cite{Tillmann2016Homologystabilitysymmetric}, and states that the symmetric diffeomorphism groups of a sequence of manifolds obtained by iterated connected sum (in the usual sense) is homologically stable. When $P$ is a point, the manifolds $\mathbf{M}_n$ considered in Corollary \ref{coro-singularities} are in fact \emph{non}-singular, since in this case $\partial T = S^{m-1}$ and a singularity whose link (type) is a sphere is locally Euclidean. However, $\mathbf{M}_n$ is still a manifold with marked points, so in this case Corollary \ref{coro-singularities} is homological stability for diffeomorphism groups of manifolds with marked points, with respect to the number of marked points: this is Theorem 1.1 of \cite{Tillmann2016Homologystabilitysymmetric}.
\end{rmk}

\begin{rmk}
\label{rmk:surgery}
In the special case where $P \hookrightarrow N$ is the inclusion $S^p \hookrightarrow S^m$ (see Example \ref{eg:surgery}), Theorem \ref{tmain} is homological stability for the symmetric diffeomorphism groups of a sequence of manifolds obtained from $M$ by iterated $p$-surgery along a collection of pairwise disjoint, mutually isotopic, framed embeddings $S^p \times D^{m-p} \hookrightarrow M$, for $p \leq \tfrac12(m-3)$.
\end{rmk}

\subsection{The hypotheses}\label{ss:hypotheses}

\begin{rmk}
\label{rmk:Dehn-surgery}
The dimension condition (a) is not used in any of the proofs of the four implications in the diagram above; the only place where it is used is in the proof of homological stability for $C_{nP}(M;G)$ in \cite{Palmer2018HomologicalstabilitymoduliI}.\footnote{See Remark 1.8 of \cite{Palmer2018HomologicalstabilitymoduliI} for a discussion of exactly where the dimension hypothesis is used in that paper.} Thus we may apply the same implications to the main result of \cite{Kupers2020unlinkedcircles} instead of \cite{Palmer2018HomologicalstabilitymoduliI}, with the result that Theorems \ref{tmain}, \ref{tlabelled}, \ref{ttwisted} and Corollary \ref{coro-singularities} are also true if the dimension assumption (a) is replaced with the assumption that $P=S^1$, $M$ is a $3$-manifold and the embedding $P=S^1 \hookrightarrow \partial M$ extends to a $2$-disc.
\end{rmk}

Putting together Remarks \ref{rmk:surgery} and \ref{rmk:Dehn-surgery}, we have the following special cases of Theorem \ref{tmain}.

\begin{coro}
We have homological stability for the symmetric diffeomorphism groups of sequences of manifolds obtained from $M$ by:
\begin{itemizeb}
\item iterated surgery along a framed $p$-sphere that may be isotoped into $\partial M$, if $p \leq \tfrac12(\mathrm{dim}(M)-3)$,
\item iterated Dehn surgery along an unknot, if $\mathrm{dim}(M) = 3$.
\end{itemizeb}
\end{coro}

\begin{rmk}
The subgroup condition (b) is always satisfied if we take $H$ to be the full group $\mathrm{Diff}_{\mathrm{fib}}(T)$ of fibrewise diffeomorphisms of $T$. This is because the map $\mathrm{Diff}_{\mathrm{fib}}(T) \to \mathrm{Diff}(P)$, given by restriction along the zero-section, is a fibre bundle, so its image is open.
\end{rmk}

\begin{rmk}
When $P$ is a point, the fibrewise diffeomorphism group is the orthogonal group $O(m)$, and condition (b) says that $H$ must be a closed subgroup of $O(m)$, not contained in $SO(m)$. Condition (c) says that any element of $H$ must extend to a diffeomorphism of $N$. If $P$ is a point and $H \subseteq SO(m)$, then this is always possible. However, in view of condition (b), we know that $H$ must not be contained in $SO(m)$, and in this case condition (c) is satisfied if and only if $N$ is either non-orientable or admits an orientation-reversing diffeomorphism.
\end{rmk}

\subsection{Stable homology}
\label{ss:stable-homology}

Knowing that a sequence of groups or spaces is homologically stable motivates the question of identifying the \emph{stable homology} of the sequence, i.e.\ the colimit of the homology of the sequence. As far as the author is aware, this question is open, both for symmetric diffeomorphism groups (corresponding to Theorem \ref{tmain}) and for diffeomorphism groups of manifolds with conical singularities (corresponding to Corollary \ref{coro-singularities}). However, the stable homology in the latter case may be related to the work of Perlmutter~\cite{Perlmutter2015Cobordismcategorymanifolds,Perlmutter2013CobordismCategoryof} on cobordism categories of manifolds with Baas-Sullivan singularities.

For a discussion of the (also mostly open) question of identifying the stable homology of the moduli spaces of disconnected submanifolds $C_{nP}(M;G)$, see \S$\langle\text{vi.}\rangle$ of the introduction of \cite{Palmer2018HomologicalstabilitymoduliI}.

\begin{rmk}
\label{rmk:GriffinHatcher}
The author has been informed of forthcoming work of James Griffin and Allen Hatcher on the homology (both stable and unstable) of a space closely related to $C_{nS^1}(\bD^3)$. Here, we suppress mention of the subgroup $G \leq \mathrm{Diff}(S^1)$, which we take to be the full group of diffeomorphisms, so this is the moduli space of unoriented $n$-component unlinks in $\bR^3$. This has a subspace $\cR_n$ of all unlinks whose components are all \emph{round} (meaning a rotation, translation and dilation of the standard inclusion of $S^1$ into $\bR^3$), and it is shown in \cite{BrendleHatcher2013Configurationspacesrings} that the inclusion $\cR_n \hookrightarrow C_{nS^1}(\bD^3)$ is a homotopy equivalence. If we write $\cP\cR_n$ for the ordered version of this space, where the components of the unlink are numbered by $\{1,\ldots,n\}$, there is a projection $\cP\cR_n \to (\bR\bP^2)^n$ given by remembering the normal vectors of a configuration of round circles, which was shown in \cite{BrendleHatcher2013Configurationspacesrings} to be a quasifibration. The fibre is the space of ordered configurations of pairwise disjoint round circles in $\bR^3$ that are each contained in $\bR^2 \times \{h\}$ for some $h \in \bR$. The forthcoming result of Griffin and Hatcher is a computation of the integral homology of this fibre.
\end{rmk}

\subsection{Mapping class groups}
\label{ss:mcg}

In \S 5 of \cite{Tillmann2016Homologystabilitysymmetric} it is shown how to modify the methods of that paper --- whose main result is homological stability for symmetric diffeomorphism groups of manifolds with respect to connected sum at a point --- to prove homological stability instead for the symmetric \emph{mapping class groups}, in other words, the (discrete) groups of path-components of the symmetric diffeomorphism groups.

This depends on knowing homological stability for the fundamental groups $\pi_1(C_n(M))$ of configuration spaces (instead of homological stability for the configuration spaces themselves) as an input for the argument. The reason why $\pi_1(C_n(M))$ is homologically stable is:
\begin{itemizeb}
\item[(i)] If $M$ is a surface (which we are assuming to be connected and with non-empty boundary), then $C_n(M)$ is an aspherical space, so its homology is the same as the homology of its fundamental group.
\item[(ii)] If $\mathrm{dim}(M) \geq 3$, then $C_n(M)$ is not aspherical, but its fundamental group nevertheless has a configuration space model for its classifying space. Namely, $\pi_1(C_n(M))$ decomposes as the wreath product $\pi_1(M) \wr \Sigma_n$, as shown in Lemma 4.1 of \cite{Tillmann2016Homologystabilitysymmetric}, and a model for the classifying space of this wreath product is the labelled configuration space $C_n(\bR^\infty,B\pi_1(M))$, which is homologically stable.
\end{itemizeb}

When $P \neq \mathrm{pt}$, homological stability is in general \emph{not} known for the \emph{motion groups} $\pi_1(C_{nP}(M))$. The author is not aware of any $C_{nP}(M)$ that is aspherical, except when $P$ is a point and $M$ is a surface, so argument (i) does not help us. In particular, the moduli spaces $C_{nS^1}(\bD^3)$ are not aspherical. However, their fundamental groups nevertheless \emph{are} known to be homologically stable, by different means: they are isomorphic to certain quotients of mapping class groups of $3$-manifolds, and a special case of the main result of \cite{HatcherWahl2010Stabilizationmappingclass} implies that they are homologically stable. We may therefore adapt the methods of the present paper, as in \S 5 of \cite{Tillmann2016Homologystabilitysymmetric}, using as input the result of \cite{HatcherWahl2010Stabilizationmappingclass}, to deduce homological stability for the \emph{symmetric mapping class groups} of any sequence of $3$-manifolds obtained from $\bD^3$ by iterated parametric connected sum (with copies of a fixed manifold) along the components of an unlink.

It seems likely that argument (ii) above, i.e.\ the argument of \S 4 of \cite{Tillmann2016Homologystabilitysymmetric}, could be extended to obtain homological stability for the motion groups $\pi_1(C_{nP}(M))$ whenever the dimensions of $P$ and $M$ satisfy condition (a), i.e., $\mathrm{dim}(M) \geq 2.\mathrm{dim}(P) + 3$, and then to deduce homological stability for the corresponding symmetric mapping class groups with respect to iterated parametric connected sum.

\paragraph{Outline.}
\addcontentsline{toc}{subsection}{Outline}

In sections \ref{s:param-connected-sum} and \ref{s:symm-diff-groups} we give precise definitions of \emph{iterated parametric connected sum} and \emph{symmetric diffeomorphism groups}. Then in the short sections \ref{s:stab} and \ref{s:hs} we explain how to define stabilisation maps and state our first main result, Theorem \ref{tmain}. In section \ref{s:mod-labelled} we give a careful definition of the notion of \emph{moduli spaces of disconnected \textbf{labelled} submanifolds} that we will need, and state Theorem \ref{tlabelled}. In sections \ref{s:proof-symm-diff-groups}--\ref{s:proof-labelled} we prove our main results, as explained in the diagram of implications from \S\ref{ss:results}, which we repeat here for convenience:
\begin{center}
\begin{tikzpicture}
[x=1mm,y=1mm,font=\small]
\node (a) at (-5,0) {\cite{Palmer2018HomologicalstabilitymoduliI}};
\node (b) at (30,0) {Theorem \ref{ttwisted}};
\node (c) at (60,0) {Theorem \ref{tlabelled}};
\node (d) at (90,0) {Theorem \ref{tmain}};
\node (e) at (120,0) {Corollary \ref{coro-singularities},};
\draw[very thick,->] (a) to node[above]{\S\ref{s:ths} and} node[below]{\cite{Palmer2018Twistedhomologicalstability}} (b);
\draw[very thick,->] (b) to node[above]{\S\ref{s:proof-labelled}} (c);
\draw[very thick,->] (c) to node[above]{\S\ref{s:proof-symm-diff-groups}} (d);
\draw[very thick,->] (d) to node[above]{\S\ref{s:conical-singularities}} (e);
\end{tikzpicture}
\end{center}
where the longest and most delicate step is the deduction in \S\ref{s:proof-symm-diff-groups} of Theorem \ref{tmain} from Theorem \ref{tlabelled}.

\paragraph{Acknowledgements.}
\addcontentsline{toc}{subsection}{Acknowledgements}

The author would like to thank Federico Cantero Mor{\'a}n, S{\o}ren Galatius, James Griffin, Allen Hatcher, Geoffroy Horel, Alexander Kupers, Oscar Randal-Williams and Ulrike Tillmann for many enlightening discussions during the preparation of this article. He would also like to thank Diarmuid Crowley and Arkadiy Skopenkov for helpful discussions about parametric connected sum, and for pointing out several references.

\tableofcontents

\section{Iterated parametric connected sum}\label{s:param-connected-sum}

Let $M$ be a smooth connected $m$-dimensional manifold with boundary and write
\[
\hat{M} = M \underset{\partial M \times [0,\infty]}{\cup} (\partial M \times [-1,\infty]),
\]
where $\partial M \times [0,\infty]$ is considered as a subspace of $M$ via a choice of collar neighbourhood, namely a proper embedding $\mathrm{col} \colon \partial M \times [0,\infty] \hookrightarrow M$ sending $\partial M \times \{0\}$ to $\partial M$ by the obvious map. Also let $N$ be a smooth compact $m$-dimensional manifold and $P$ be a smooth closed $p$-dimensional manifold. Fix embeddings
\begin{align*}
i \colon P &\lhto \partial M \subseteq \hat{M}, \\
j \colon P &\lhto \mathring{N} = \mathrm{int}(N),
\end{align*}
assume that the normal bundles $\nu_i \to P$ and $\nu_j \to P$ are isomorphic, and choose an isomorphism between them. Choose a metric on the bundle $\nu_i$ so that its structure group is $O(m-p)$, and give $\nu_j$ the corresponding metric via the chosen isomorphism. Let $D(\nu_i) \subseteq \nu_i$ and $D(\nu_j) \subseteq \nu_j$ denote the subbundles consisting of vectors of norm at most one. We implicitly identify $D(\nu_i)$ and $D(\nu_j)$ via the chosen isomorphism, and write
\[
\xi \colon T = D(\nu_i) = D(\nu_j) \longrightarrow P,
\]
which is a fibre bundle with fibres diffeomorphic to the closed disc $D^{m-p}$ and structure group $O(m-p)$. Write $o \colon P \hookrightarrow T$ for the zero-section. We now also choose tubular neighbourhoods for $\nu_i$ and $\nu_j$, namely embeddings
\begin{align*}
\tau_i \colon T &\lhto \hat{M}, \\
\tau_j \colon T &\lhto \mathring{N},
\end{align*}
such that $i = \tau_i \circ o$ and $j = \tau_j \circ o$, and assume that $\tau_i(T) \subseteq \partial M \times (-\tfrac12,\tfrac12) \subset \hat{M}$. We may define an embedding
\[
\Phi \colon nT = \{ 1,\ldots, n \} \times T \lhto \mathring{M}
\]
of $n$ disjoint, parallel copies of the tubular neighbourhood $T$ in the interior of $M$ by
\[
\Phi(\alpha,x) = \tau_i(x) + \alpha - \tfrac12,
\]
where for $r \in [0,\infty)$ the notation $\! {} + r$ denotes the self-map $\partial M \times [-1,\infty] \to \partial M \times [-1,\infty]$ given by the identity on $\partial M$ and adding $r$ in the second coordinate.

\begin{notation}
For a space $X$, we will henceforth use the notations $nX$ and $\{1,\ldots,n\} \times X$ interchangeably. We will also write $\hat{n} X$ for $\{ 0,\ldots,n \} \times X$. Note that $\hat{n}X \supsetneq nX$!
\end{notation}

\begin{defn}\label{d:param-conn-sum}
With this data, we may form the \emph{parametric connected sum}
\[
M \underset{nP}{\sharp} nN \;=\; (M \smallsetminus \Phi(nT')) \; \underset{n(\mathring{T} \smallsetminus T')}{\cup} \; n(N \smallsetminus \tau_j(T')),
\]
where $\mathring{T} \to P$ is the interior of $T$, equivalently the subbundle of vectors of length less than $1$, and $T' \to P$ is the subbundle of $T$ of vectors of length at most one half. The union is formed along $n(\mathring{T} \smallsetminus T')$, which is viewed as a subspace of $M \smallsetminus \Phi(nT')$ via $\Phi$, and as a subspace of $n(N \smallsetminus \tau_j(T'))$ via $\mathrm{id} \times \tau_j$ precomposed with the involution of $\mathring{T} \smallsetminus T' = \partial T \times (0.5,1)$ given by $(x,t) \mapsto (x,1.5-t)$.
\end{defn}

\begin{rmk}
The boundary of the parametric connected sum is $\partial \bigl( M \underset{nP}{\sharp} nN \, \bigr) \,\cong\, \partial M \sqcup n(\partial N)$.
\end{rmk}

\section{Symmetric diffeomorphism groups}\label{s:symm-diff-groups}

Recall that we have a disc bundle $\xi \colon T \to P$ with structure group $O(m-p)$. Let
\[
\mathrm{Diff}_{\mathrm{fib}}(T) \leq \mathrm{Diff}(T)
\]
denote the subgroup of diffeomorphisms $\varphi$ such that $\xi \circ \varphi = \bar{\varphi} \circ \xi$ for some diffeomorphism $\bar{\varphi}$ of $P$ and $\varphi$ restricts to a linear isometry on each fibre of $\xi$. Write 
\[
z \colon \mathrm{Diff}_{\mathrm{fib}}(T) \longrightarrow \mathrm{Diff}(P)
\]
for the continuous homomorphism given by $\varphi \mapsto \bar{\varphi}$, equivalently, by restricting fibrewise diffeomorphisms of $T$ to $P$ via the zero-section of $\xi$. Its kernel is the group of bundle automorphisms of $\xi$. Choose a subgroup
\[
H \leq \mathrm{Diff}_{\mathrm{fib}}(T),
\]
and write $G = z(H) \leq \mathrm{Diff}(P)$ and $K = H \cap \mathrm{ker}(z) \leq \mathrm{ker}(z)$.

\begin{defn}\label{d:symm-diff-group}
The \emph{symmetric diffeomorphism group} $\Sigma_H \mathrm{Diff}(M \underset{nP}{\sharp} nN) \leq \mathrm{Diff}(M \underset{nP}{\sharp} nN)$ consists of those diffeomorphisms that are the identity on a neighbourhood of $\partial M$, send the submanifold
\[
n(\mathring{T} \smallsetminus T') \;\subset\; M \underset{nP}{\sharp} nN
\]
to itself setwise and act on this submanifold through the wreath product $H \wr \Sigma_n$.

The subgroup $\Sigma_H \mathrm{Diff}(M,nT)$ of $\mathrm{Diff}(M)$ consists of those diffeomorphisms that are the identity on a neighbourhood of the boundary, send the submanifold
\[
\Phi(nT) \;\subset\; M
\]
to itself setwise and act on this submanifold through the wreath product $H \wr \Sigma_n$.
\end{defn}

\begin{rmk}\label{r:mphi}
Each diffeomorphism of $\Sigma_H \mathrm{Diff}(M,nT)$ is determined by its restriction to the submanifold $M_\Phi = M \smallsetminus \Phi(n\mathring{T})$, whose boundary splits as $\partial M_\Phi = \partial M \sqcup \Phi(n\partial T)$, so it may also be viewed as the subgroup of $\mathrm{Diff}(M_\Phi)$ of diffeomorphisms that act by the identity on a neighbourhood of $\partial M \subset \partial M_\Phi$ and by $H \wr \Sigma_n$ on $\Phi(n\partial T) \subset \partial M_\Phi$. We will call this the \emph{boundary-permuting diffeomorphism group} of $M_\Phi$. See \S\ref{s:conical-singularities} for another interpretation in terms of manifolds with conical singularities.
\end{rmk}

\begin{rmk}\label{r:pullback-of-groups}
There is a continuous homomorphism
\begin{equation}\label{eq:comparision-hom}
\Sigma_H \mathrm{Diff}(M \underset{nP}{\sharp} nN) \longrightarrow \Sigma_H \mathrm{Diff}(M,nT)
\end{equation}
given by restricting a diffeomorphism to $M \smallsetminus \Phi(nT')$ and then extending to $M$ by extending linearly across each fibre of $\xi \colon T \to P$. Similarly, there is a continuous homomorphism
\[
\Sigma_H \mathrm{Diff}(M \underset{nP}{\sharp} nN) \longrightarrow \mathrm{Diff}_H(N) \wr \Sigma_n,
\]
where $\mathrm{Diff}_H(N) \leq \mathrm{Diff}(N)$ denotes the subgroup of diffeomorphisms that send the submanifold $\tau_j(T) \subset N$ to itself setwise and act on it through the subgroup $H \leq \mathrm{Diff}(T)$. These homomorphisms fit into the following pullback square of topological groups:
\begin{equation}\label{eq:pullback-symm-boundary}
\centering
\begin{split}
\begin{tikzpicture}
[x=1mm,y=1mm]
\node (tl) at (0,15) {$\Sigma_H \mathrm{Diff}(M \underset{nP}{\sharp} nN)$};
\node (tr) at (50,15) {$\mathrm{Diff}_H(N) \wr \Sigma_n$};
\node (bl) at (0,0) {$\Sigma_H \mathrm{Diff}(M,nT)$};
\node (br) at (50,0) {$H \wr \Sigma_n$};
\draw[->] ($ (tl.east) + (0,0.75) $) to ($ (tr.west) + (0,0.75) $);
\draw[->] (bl) to (br);
\draw[->] ($ (tl.south) + (0,2) $) to (bl);
\draw[->] (tr) to (br);
\end{tikzpicture}
\end{split}
\end{equation}
Via this description, we could generalise the notion of \emph{symmetric diffeomorphism group}, exactly as on page 133 of \cite{Tillmann2016Homologystabilitysymmetric} (the diagram \eqref{eq:pullback-symm-boundary} corresponds exactly to the diagram at the top of that page), by replacing $\mathrm{Diff}_H(N)$ with an arbitrary topological group $L$ equipped with a surjective\footnote{We will shortly impose the assumption that $\mathrm{Diff}_H(N) \to H$ is surjective, in other words, that each element of $H \leq \mathrm{Diff}(T)$ may be extended over $N$. We do not, however, need this extension to preserve composition, i.e.\ we do not require this surjection to split.} continuous homomorphism $L \to H$, and then taking the pullback of the diagram
\[
\Sigma_H \mathrm{Diff}(M,nT) \longrightarrow H \wr \Sigma_n \longleftarrow L \wr \Sigma_n.
\]
The results of this paper hold also in this higher level of generality, but for concreteness we will stick to the symmetric diffeomorphism groups as defined above, with $L = \mathrm{Diff}_H(N)$ for some manifold $N$ equipped with an embedding $T \hookrightarrow \mathring{N}$.
\end{rmk}

\begin{rmk}\label{r:canonical-heq}
Up to a canonical homotopy equivalence, the boundary-permuting diffeomorphism group $\Sigma_H \mathrm{Diff}(M,nT)$ is a special case of the symmetric diffeomorphism group $\Sigma_H \mathrm{Diff}(M \underset{nP}{\sharp} nN)$.

To see this, let us fix the initial data of an embedding $i \colon P \hookrightarrow \hat{M}$ whose image lies in $\partial M \subset \hat{M}$, a metric on the normal bundle $\nu_i \to P$ of this embedding and a tubular neighbourhood $T = D(\nu_i) \hookrightarrow \hat{M}$. Together with a choice of subgroup $H \leq \mathrm{Diff}_{\mathrm{fib}}(T)$, this determines the group $\Sigma_H \mathrm{Diff}(M,nT)$.

We are now free to choose any compact $m$-dimensional manifold $N$, an embedding $j \colon P \hookrightarrow \mathring{N}$ and isomorphism $\nu_i \cong \nu_j$. In particular, we may choose $N = D_2(\nu_i)$, the total space of the subbundle of $\nu_i$ consisting of all vectors of norm at most two, and let $j \colon P \hookrightarrow \mathring{N} = \mathring{D}_2(\nu_i)$ be the zero-section. The normal bundles $\nu_i$ and $\nu_j$ are then canonically isomorphic. We also have to choose a tubular neighbourhood
\[
T = D(\nu_j) \lhto \mathring{D}_2(\nu_j) = \mathring{N},
\]
which we simply take to be the inclusion. It is then easy to see that, in this case, the parametric connected sum $M \underset{nP}{\sharp} nD_2(\nu_i)$ deformation retracts onto its submanifold $M_\Phi$ (\cf Remark \ref{r:mphi}). Moreover, in this case, the continuous homomorphism \eqref{eq:comparision-hom} admits a continuous, homomorphic section given by extending fibrewise automorphisms of $D(\nu_i) \to P$ linearly to $D_2(\nu_i) \to P$, and this is a homotopy inverse (in the $2$-category of topological groups) for \eqref{eq:comparision-hom}.\footnote{Here it is important that, in Definition \ref{d:symm-diff-group}, we require symmetric diffeomorphisms to act as the identity near $\partial M$, but there is no condition on how they act near the rest of the boundary of $M \underset{nP}{\sharp} nN$, i.e.\ the $n$ copies of $\partial N$.}
\end{rmk}

\section{Stabilisation maps}\label{s:stab}

We may extend $\Phi$ to an embedding
\[
\hat{\Phi} \colon \hat{n}T = \{ 0,\ldots,n \} \times T \lhto \hat{M}
\]
defined by the same formula $\hat{\Phi}(\alpha,x) = \tau_i(x) + \alpha - \tfrac12$ as before. In other words, we adjoin the embedding $\tau_i - \tfrac12$ to $\Phi$. Using this $\hat{\Phi}$ and $n+1$ copies of the tubular neighbourhood $\tau_j \colon T \hookrightarrow N$ we define
\[
M \underset{\hat{n}P}{\sharp} \hat{n}N \;=\; (\hat{M} \smallsetminus \hat{\Phi}(\hat{n}T')) \; \underset{\hat{n}(\mathring{T} \smallsetminus T')}{\cup} \; \hat{n}(N \smallsetminus \tau_j(T'))
\]
as above, as well as groups $\Sigma_H \mathrm{Diff}(M \underset{\hat{n}P}{\sharp} \hat{n}N)$ and $\Sigma_H \mathrm{Diff}(M,\hat{n}T)$ and a continuous homomorphism
\[
\Sigma_H \mathrm{Diff}(M \underset{\hat{n}P}{\sharp} \hat{n}N) \longrightarrow \Sigma_H \mathrm{Diff}(M,\hat{n}T).
\]
Extending diffeomorphisms by the identity along the inclusion
$M \underset{nP}{\sharp} nN \lhto M \underset{\hat{n}P}{\sharp} \hat{n}N$, we obtain the horizontal maps in the commutative square
\begin{equation}\label{eq:symdiff-square}
\centering
\begin{split}
\begin{tikzpicture}
[x=1mm,y=1mm]
\node (tl) at (0,15) {$\Sigma_H \mathrm{Diff}(M \underset{nP}{\sharp} nN)$};
\node (tr) at (50,15) {$\Sigma_H \mathrm{Diff}(M \underset{\hat{n}P}{\sharp} \hat{n}N)$};
\node (bl) at (0,0) {$\Sigma_H \mathrm{Diff}(M,nT)$};
\node (br) at (50,0) {$\Sigma_H \mathrm{Diff}(\hat{M},\hat{n}T)$};
\draw[->] ($ (tl.east) + (0,1.5) $) to ($ (tr.west) + (0,1.5) $);
\draw[->] (bl) to (br);
\draw[->] ($ (tl.south) + (0,2) $) to (bl);
\draw[->] ($ (tr.south) + (0,2) $) to (br);
\end{tikzpicture}
\end{split}
\end{equation}
of topological groups. These are the \emph{stabilisation maps}.

\section{Homological stability for symmetric diffeomorphism groups}\label{s:hs}

We now make three assumptions (\cf \S\ref{ss:hypotheses}).
\begin{itemizeb}
\item[(a)] The dimensions $p$ and $m$ satisfy $p \leq \tfrac12(m-3)$.
\item[(b)] The subgroup $G = z(H) \leq \mathrm{Diff}(P)$ is open and $K \leq \mathrm{ker}(z)$ is closed. Moreover, we assume that the coset space $\mathrm{ker}(z)/K$ is path-connected.
\item[(c)] Every diffeomorphism in $H$ extends across $N$. More precisely, given any $h \in H \leq \mathrm{Diff}(T)$, there exists a diffeomorphism $\varphi$ of $N$ such that $\varphi(\tau_j(T)) = \tau_j(T)$ and $\varphi|_{\tau_j(T)} = \tau_j \circ h \circ \tau_j^{-1}$. In the notation of Remark \ref{r:pullback-of-groups}, this says that the continuous homomorphism $\mathrm{Diff}_H(N) \to H$ is surjective (but not necessarily split).
\end{itemizeb}

The first main result of this paper is the following theorem.

\begin{athm}\label{tmain}
Under these assumptions, the two horizontal morphisms in \eqref{eq:symdiff-square} induce split-injections on homology in all degrees and isomorphisms in degrees $* \leq \tfrac{n}{2} - 1$.
\end{athm}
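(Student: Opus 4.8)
The plan is to realise $B\Sigma_H\mathrm{Diff}(M\underset{nP}{\sharp}nN)$ as the total space of a fibration over the ($n$-independent) base $B\mathrm{Diff}_\partial(M)$ whose fibre is one of the labelled moduli spaces of Theorem \ref{tlabelled}, and then to run a comparison of Serre spectral sequences. Here $\mathrm{Diff}_\partial(M)$ is the group of diffeomorphisms of $M$ that are the identity near $\partial M$. Composing the homomorphism \eqref{eq:comparision-hom} with the inclusion $\Sigma_H\mathrm{Diff}(M,nT)\hookrightarrow\mathrm{Diff}_\partial(M)$ gives a continuous homomorphism $q_n\colon\Sigma_H\mathrm{Diff}(M\underset{nP}{\sharp}nN)\to\mathrm{Diff}_\partial(M)$, which is surjective by hypothesis (c) (via the pullback square \eqref{eq:pullback-symm-boundary}, since $\mathrm{Diff}_H(N)\to H$ is surjective). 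The homotopy fibre of $Bq_n$ is the homotopy orbit space $\mathrm{Diff}_\partial(M)/\!\!/\Sigma_H\mathrm{Diff}(M\underset{nP}{\sharp}nN)$, where the group acts on $\mathrm{Diff}_\partial(M)$ by right translation through $q_n$. The main step is to identify this homotopy orbit space with a labelled moduli space $C_{nP}(M,Z;G)$, where $G=z(H)$ — open in $\mathrm{Diff}(P)$ by hypothesis (b), so that hypotheses (a) and (b*) of Theorem \ref{tlabelled} are met — and where $Z\to\mathrm{Emb}(P,M)$ is the $G$-equivariant Serre fibration whose fibre over an embedding $e\colon P\hookrightarrow M$ is the product of: (i) the space of tubular-neighbourhood embeddings of $e$, with their normal bundles identified with $T$, taken modulo $K=H\cap\ker(z)$ — this is homotopy equivalent to $\ker(z)/K$, path-connected by hypothesis (b); and (ii) the classifying space of the group of diffeomorphisms of $N$ fixing $\tau_j(T)$ pointwise, which records a copy of $N$ glued along $e$ and is again path-connected. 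The identification sends a class $[\rho]$ to the configuration obtained by transporting the standard configuration $\Phi(nT)$ by $\rho$, decorated by the induced $H$-framing and the glued-in copies of $N$; the content is that the $\mathrm{Diff}_\partial(M)$-action on configurations isotopic to the standard one is transitive (isotopy extension theorem) with exactly the right point-set stabiliser. This yields, naturally in $n$, a fibration sequence
\[ C_{nP}(M,Z;G)\;\longrightarrow\;B\Sigma_H\mathrm{Diff}(M\underset{nP}{\sharp}nN)\;\longrightarrow\;B\mathrm{Diff}_\partial(M). \]

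\textbf{Comparison and the boundary-permuting case.} Using the diffeomorphism $\hat M\cong M$, the stabilisation map in \eqref{eq:symdiff-square} covers $\mathrm{id}_{B\mathrm{Diff}_\partial(M)}$ and restricts on fibres to the stabilisation map $C_{nP}(M,Z;G)\to C_{(n+1)P}(M,Z;G)$ of Theorem \ref{tlabelled}, compatibly with the $\pi_1(B\mathrm{Diff}_\partial(M))$-actions on the homology of the fibres. The comparison theorem for the associated Serre spectral sequences then promotes the homology isomorphism on fibres in degrees $*\leq\tfrac{n}{2}-1$ to the same isomorphism range for $B\Sigma_H\mathrm{Diff}(M\underset{nP}{\sharp}nN)$, and the split-injectivity in all degrees propagates in the same way from the ($\pi_1$-equivariant, spectral-sequence-compatible) homological splitting furnished by Theorem \ref{tlabelled}. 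For the second horizontal morphism in \eqref{eq:symdiff-square}, concerning the boundary-permuting groups $\Sigma_H\mathrm{Diff}(M,nT)$, no further argument is needed: by Remark \ref{r:canonical-heq} the choice $N=D_2(\nu_i)$ yields a canonical, stabilisation-compatible homotopy equivalence $B\Sigma_H\mathrm{Diff}(M,nT)\simeq B\Sigma_H\mathrm{Diff}(M\underset{nP}{\sharp}nD_2(\nu_i))$.

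\textbf{Main obstacle.} The crux is the first step — constructing the fibration and, above all, identifying its fibre with a labelled moduli space to which Theorem \ref{tlabelled} genuinely applies. Choosing the labelling fibration $Z$ so that its fibres are path-connected is precisely what hypotheses (b) and (c) are for, and since all the groups in sight are infinite-dimensional (Fréchet) Lie groups one must be careful about which maps are Serre fibrations — invoking the standard fibration criteria for spaces of embeddings and tubular neighbourhoods together with the isotopy extension theorem — and about making every identification natural in $n$ and compatible with the specific stabilisation maps of \S\ref{s:stab}. This is bookkeeping rather than a single hard idea, which is exactly what makes it the longest step.
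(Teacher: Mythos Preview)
Your overall strategy is the same as the paper's: fibre $B\Sigma_H\mathrm{Diff}(M\underset{nP}{\sharp}nN)$ over $B\mathrm{Diff}_\partial(M)$, identify the fibre with a labelled moduli space $C_{nP}(M,Z;G)$ to which Theorem \ref{tlabelled} applies, and compare Serre spectral sequences. The paper carries this out with explicit embedding-space models (quotients of $X_n\subset\mathrm{Emb}_b(M,\bR^{\infty,0})\times\mathrm{Emb}(M\underset{nP}{\sharp}nN,\bR^{\infty,0})$) rather than abstract homotopy quotients, but that is a difference of implementation, not of idea.

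There is one genuine gap. Your argument for split-injectivity is that the $\pi_1$-equivariant splitting on fibre homology from Theorem \ref{tlabelled} ``propagates'' through the spectral sequence. It does not, in general: a split injection on $E^2$ pages need not survive to a split injection on $E^\infty$ or on the abutment, because differentials have no reason to respect the splitting. The paper does \emph{not} deduce split-injectivity from the spectral sequence at all; instead it proves it directly at the level of the classifying spaces (Lemma \ref{l:split-injectivity}), by constructing transfer-type maps
\[
X_n/\Sigma_H\mathrm{Diff}(M\underset{nP}{\sharp}nN)\;\longrightarrow\;\mathrm{Sp}^{\binom{n}{k}}\bigl(X_k/\Sigma_H\mathrm{Diff}(M\underset{kP}{\sharp}kN)\bigr)
\]
and invoking Dold's decomposition lemma. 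You would need to lift the ``forget some components'' transfers from the moduli-space level to the symmetric-diffeomorphism-group level and check Dold's relation there; merely knowing a splitting exists on the fibre is not enough.

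A minor point: $q_n$ is \emph{not} surjective --- a generic element of $\mathrm{Diff}_\partial(M)$ does not preserve $\Phi(nT)$ --- so hypothesis (c) does not give what you claim there. This does not damage the argument (the homotopy fibre of $Bq_n$ is $\mathrm{Diff}_\partial(M)/\!\!/\Sigma_H\mathrm{Diff}(M\underset{nP}{\sharp}nN)$ regardless, and transitivity on the relevant path-component follows from isotopy extension, as you later say), but the sentence as written is wrong and should be removed.
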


\begin{rmk}
If we take coefficients in a field or $\bQ/\bZ$, the range of surjectivity improves to $* \leq \tfrac{n}{2}$.
\end{rmk}

\section{Moduli spaces of labelled disconnected submanifolds}\label{s:mod-labelled}

We will deduce Theorem \ref{tmain} from a generalisation of the main theorem of \cite{Palmer2018HomologicalstabilitymoduliI}, so we will first state this generalisation precisely (see Theorem \ref{tlabelled} on page \pageref{tlabelled}).

\begin{notation}\label{not:Mr}
By construction of $\hat{M}$, there is a smooth embedding
\[
\colhat \colon \partial M \times [-1,\infty] \lhto \hat{M}.
\]
For any $r \in [-1,\infty)$ we will write $M(r) = \hat{M} \smallsetminus \colhat([-1,r])$ and $M[r] = \hat{M} \smallsetminus \colhat([-1,r))$.
\end{notation}

\begin{defn}
Write $\mathrm{Diff}_{[-1,1]}(\bR)$ for the topological group of diffeomorphisms $\bR \to \bR$ whose support is contained in $[-1,1] \subset \bR$. There is an evaluation map
\[
\mathrm{ev}_0 \colon \mathrm{Diff}_{[-1,1]}(\bR) \longrightarrow (-1,1)
\]
taking $\varphi$ to $\varphi(0)$, which is a fibre bundle. We now make some choices that will be used in several constructions in this section and in subsequent sections. First, we choose an odd\footnote{In the sense of odd functions, i.e.\ $\theta(-t) = -\theta(t)$.} diffeomorphism $\theta \colon \bR \to (-1,1)$. Second, we choose a lift $\bar{\theta} \colon \bR \to \mathrm{Diff}_{[-1,1]}(\bR)$ of $\theta$ (i.e.\ $\mathrm{ev}_0 \circ \bar{\theta} = \theta$) that is also a homomorphism with respect to addition on $\bR$ and composition of diffeomorphisms.
\end{defn}

\begin{rmk}
Given $\theta$, one may try to define $\bar{\theta}(r) \colon \bR \to \bR$ on $t \in (-1,1)$ by
\[
\bar{\theta}(r)(t) = \theta(\theta^{-1}(t) + r)
\]
and extend by the identity outside of $(-1,1)$. This will work as long as the function so defined is smooth at $t=1$, which depends on how well $\theta$ has been chosen. For example, if we only care about $C^1$ diffeomorphisms, the function $\theta(t) = \tfrac{2}{\pi}\mathrm{arctan}(t)$ would work for this construction of $\bar{\theta}$.
\end{rmk}

\begin{defn}\label{d:shift}
Recall that we chose an embedding $i \colon P \hookrightarrow \partial M$. For any $r \in \bR$ we denote the shifted embedding
\[
\colhat \circ (i(-),\theta(r)) \colon P \lhto \hat{M}
\]
by $i_r$ and we define a diffeomorphism
\[
\mathrm{sh}_r \colon \hat{M} \longrightarrow \hat{M}
\]
by $\mathrm{sh}_r(\colhat(x,t)) = \colhat(x,\bar{\theta}(r)(t))$ and $\mathrm{sh}_r(x) = x$ if $x \in \hat{M}$ is not in the image of $\colhat$.

Now write $E = \mathrm{Emb}(P,\hat{M})$. There is a continuous group homomorphism $\gamma \colon \bR \to \mathrm{Homeo}(E)$ given by $\gamma(r)(\varphi) = \mathrm{sh}_r \circ \varphi$. Moreover, since $G \leq \mathrm{Diff}(P)$ acts on $E$ by precomposition, and $\gamma(r)$ acts by postcomposition, we in fact have a continuous group homomorphism
\[
\gamma \colon \bR \longrightarrow \mathrm{Homeo}^G(E)
\]
into the topological group of $G$-equivariant self-homeomorphisms of $E$. Note that $\gamma(r)(i_s) = i_{r+s}$ for all $r,s \in \bR$.
\end{defn}

\begin{inputdata}\label{input-data}
Now we fix the additional input data needed to define moduli spaces of disconnected submanifolds \emph{with labels} (see Definition \ref{d:labelled-disconn-submfld}). Choose a $G$-equivariant Serre fibration $\pi \colon Z \to E$ and a continuous homomorphism
\[
\bar{\gamma} \colon \bR \longrightarrow \mathrm{Homeo}^G(Z)
\]
such that $\pi \circ \bar{\gamma}(r) = \gamma(r) \circ \pi$ for all $r \in \bR$. Also choose a basepoint $\bar{\imath}_0 \in Z$ such that $\pi(\bar{\imath}_0) = i_0$. For any $r \in \bR$, define $\bar{\imath}_r = \bar{\gamma}(r)(\bar{\imath}_0)$ and note that $\pi(\bar{\imath}_r) = i_r$.

(The purpose of the data $(\bar{\gamma},\bar{\imath}_0)$ is to allow us to lift the operation of ``shifting'' an embedding via $\mathrm{sh}_r \circ -$ from the space of embeddings to the total space $Z$ of the fibration.)
\end{inputdata}

\begin{defn}[\emph{Moduli spaces of labelled disconnected submanifolds}]\label{d:labelled-disconn-submfld}
Fix a subgroup $G \leq \mathrm{Diff}(P)$ as above. Then $\pi^n \colon Z^n \to E^n$ is a $(G \wr \Sigma_n)$-equivariant fibration. Let $\pi_n \colon Z_n \to \mathrm{Emb}(nP,M(0))$ be its restriction to the $(G \wr \Sigma_n)$-invariant subspace $\mathrm{Emb}(nP,M(0)) \subset E^n$. We then define
\[
C_{nP}(M,Z;G) \;=\; \bigl( Z_n / (G \wr \Sigma_n) \bigr)_{\{ [\bar{\imath}_1] ,\ldots, [\bar{\imath}_n] \}} ,
\]
the path-component of the quotient $Z_n / (G \wr \Sigma_n) \subseteq \mathrm{Sp}^n(Z/G)$ containing the element $\{ [\bar{\imath}_1] ,\ldots, [\bar{\imath}_n] \}$. Similarly, we may restrict $\pi^{n+1}$ to the subspace $\mathrm{Emb}(\hat{n}P,M(-1)) \subset E^{n+1}$ to obtain a $(G \wr \Sigma_{n+1})$-equivariant fibration $\pi_{\hat{n}} \colon Z_{\hat{n}} \to \mathrm{Emb}(\hat{n}P,M(-1))$, and define
\[
C_{\hat{n}P}(M,Z;G) \;=\; \bigl( Z_{\hat{n}} / (G \wr \Sigma_{n+1}) \bigr)_{\{ [\bar{\imath}_0] ,\ldots, [\bar{\imath}_n] \}} .
\]
Viewing these as subspaces of the symmetric powers $\mathrm{Sp}^n(Z/G)$ and $\mathrm{Sp}^{n+1}(Z/G)$ respectively, we may define a map
\[
s_n \colon C_{nP}(M,Z;G) \longrightarrow C_{\hat{n}P}(M,Z;G)
\]
by $s_n(\{ [\varphi_1] ,\ldots, [\varphi_n] \}) = \{ [\bar{\imath}_0] , [\varphi_1] ,\ldots, [\varphi_n] \}$.

These constructions are functorial in $P$, $M$, $G$ and $Z$ in an appropriate sense. We will describe how they are functorial in $Z$ when the other data $P$, $M$ and $G$ are fixed. For $i=1,2$ let $\pi_i \colon Z_i \to E$ be based, $G$-equivariant Serre fibrations and let $\bar{\gamma}_i \colon \bR \to \mathrm{Homeo}^G(Z_i)$ be continuous homomorphisms such that $\pi_i \circ \bar{\gamma}_i(r) = \gamma(r) \circ \pi_i$ for all $r$. Given any based, $G$-equivariant map $F \colon Z_1 \to Z_2$ such that $\pi_2 \circ F = \pi_1$ and $F \circ \bar{\gamma}_1(r) = \bar{\gamma}_2(r) \circ F$ for all $r$, there are induced maps making the square
\begin{center}
\begin{tikzpicture}
[x=1mm,y=1mm]
\node (tl) at (0,12) {$C_{nP}(M,Z_1;G)$};
\node (tr) at (40,12) {$C_{\hat{n}P}(M,Z_1;G)$};
\node (bl) at (0,0) {$C_{nP}(M,Z_2;G)$};
\node (br) at (40,0) {$C_{\hat{n}P}(M,Z_2;G)$};
\draw[->] (tl) to node[above,font=\small]{$s_n$} (tr);
\draw[->] (bl) to node[above,font=\small]{$s_n$} (br);
\draw[->] (tl) to (bl);
\draw[->] (tr) to (br);
\end{tikzpicture}
\end{center}
commute. These form a category with terminal object given by $(\pi,\bar{\gamma}) = (\mathrm{id} \colon E \to E,\gamma)$. When the fibration $\pi \colon Z \to E$ (and the map $\bar{\gamma}$) are taken to be this terminal object, we drop the $Z$ from the notation and write simply $s_n \colon C_{nP}(M;G) \to C_{\hat{n}P}(M;G)$. For any other choice of $(\pi \colon Z \to E,\bar{\gamma})$ there is a commutative square
\begin{equation}\label{eq:square-stabilisation}
\centering
\begin{split}
\begin{tikzpicture}
[x=1mm,y=1mm]
\node (tl) at (0,12) {$C_{nP}(M,Z;G)$};
\node (tr) at (40,12) {$C_{\hat{n}P}(M,Z;G)$};
\node (bl) at (0,0) {$C_{nP}(M;G)$};
\node (br) at (40,0) {$C_{\hat{n}P}(M;G).$};
\draw[->] (tl) to node[above,font=\small]{$s_n$} (tr);
\draw[->] (bl) to node[above,font=\small]{$s_n$} (br);
\draw[->] (tl) to (bl);
\draw[->] (tr) to (br);
\end{tikzpicture}
\end{split}
\end{equation}
\end{defn}

\begin{thm}[{\cite[Theorem A]{Palmer2018HomologicalstabilitymoduliI}}]
The map $s_n \colon C_{nP}(M;G) \to C_{\hat{n}P}(M;G)$ induces split-injections on homology in all degrees. It induces isomorphisms on homology up to degree $\tfrac{n}{2}$ if $p \leq \tfrac12(m-3)$ and $G$ is an open subgroup of $\mathrm{Diff}(P)$.
\end{thm}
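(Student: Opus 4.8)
Since this final statement is Theorem~A of Part~I~\cite{Palmer2018HomologicalstabilitymoduliI}, strictly its ``proof'' here is a citation; below I sketch the argument one would run, which is the one carried out there. The method is the standard semi-simplicial resolution together with a spectral-sequence induction, in the spirit of McDuff--Segal, Hatcher--Wahl and Randal-Williams--Wahl, adapted to moduli of disconnected submanifolds. First I would, for each $n$, build a semi-simplicial space $\mathcal{R}_\bullet(n)$ augmented over $C_{nP}(M;G)$: using the collar $\partial M \times [0,\infty]$ and the standard boundary-parallel copies $i_1,i_2,\dots$, a $k$-simplex over a configuration $c \in C_{nP}(M;G)$ is an ordered $(k{+}1)$-tuple of pairwise disjoint embedded ``tubes'' --- embeddings of $P \times [0,1]$ with compatible normal-bundle and $G$-structure data --- each running from one component of $c$ out to a standard position $i_j$ in the collar and with interior disjoint from $c$. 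Face maps forget a tube and the augmentation forgets all tubes; this exhibits the sequence $C_{nP}(M;G)$ as a module over a suitable monoid of configurations in a collar, with the stabilisation map being multiplication by the class of a single standard copy.

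Next I would identify the spaces of simplices. Sliding the chosen $k{+}1$ components along their tubes out into the collar and deleting them identifies, up to a zig-zag of weak equivalences, the space of $k$-simplices with $C_{(n-k-1)P}(M;G)$ --- the tube data contributing nothing beyond an ordering, by contractibility of spaces of tubular neighbourhoods and path-connectivity of the relevant relative embedding spaces --- compatibly with the face maps. Filtering $\|\mathcal{R}_\bullet(n)\|$ by skeleta then yields a spectral sequence with $E^1_{p,q} \cong H_q(C_{(n-p-1)P}(M;G))$, whose $d^1$ is an alternating sum of stabilisation maps, converging to $H_{p+q}(C_{nP}(M;G))$ in a range governed by the connectivity of the augmentation, treated in the next paragraph. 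Inducting on $n$ and doing the usual diagram chase --- an alternating sum of maps all homotopic to the stabilisation is $0$ or $\pm s$ according to the parity of the number of terms --- one gets that $s_n$ is an isomorphism up to degree $\approx n/2$.

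The geometric heart, and the step I expect to be the main obstacle, is to show that the augmentation $\|\mathcal{R}_\bullet(n)\| \to C_{nP}(M;G)$ is roughly $\tfrac n2$-connected. This is a ``surjectivity up to isotopy'' statement for families: given a map of a finite complex into $C_{nP}(M;G)$ with a lift prescribed on a subcomplex, one must, after a small isotopy of the family, produce over the whole complex a system of $p{+}1$ disjoint embedded tubes from components to the collar, extending the given one. The proof is by general position: a generic tube from a component to the collar, missing the other components and the previously chosen tubes, exists whenever the codimension is large enough to make the relevant intersection manifolds empty in the given parameter range --- and this is exactly where the hypothesis $p \le \tfrac12(m-3)$, i.e.\ $2(p{+}1) \le m-1$ with one dimension to spare for the family parameter, is used; it is the analogue of Hatcher's connectivity theorems for arc and disc complexes. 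The argument is a delicate inductive transversality: first make the tubes embedded and disjoint from the configuration, then make them mutually disjoint, all while respecting the prescribed lift on the subcomplex.

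Finally, split injectivity on homology in \emph{all} degrees --- which does not need the dimension hypothesis --- I would obtain from a separate, more elementary argument using the module structure of the first paragraph together with a stabilisation-to-infinity or transfer construction, producing a one-sided homotopy inverse to $s_n$ on homology. Throughout, the openness of $G$ in $\Diff(P)$ is what guarantees that $\Emb(P,M) \to \Emb(P,M)/G$ and its multi-component analogues are fibre bundles, so that all the quotient spaces above have the expected homotopy type; the dimension hypothesis itself enters only in the connectivity estimate.
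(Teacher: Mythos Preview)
You are right that in this paper the statement is simply quoted from Part~I and carries no proof here beyond the citation; your opening sentence already says this, and that is the complete ``proof'' as far as Part~II is concerned. Your sketch of the argument in Part~I is accurate in outline: split-injectivity is obtained separately via Dold's transfer (the maps to symmetric powers given by forgetting components in all possible ways --- exactly the mechanism reused in \S\ref{s:proof-symm-diff-groups} and \S\ref{s:proof-labelled} of this paper), and the isomorphism in the stable range is obtained by a semi-simplicial resolution over $C_{nP}(M;G)$ together with a spectral-sequence induction, with the dimension hypothesis $p \leq \tfrac12(m-3)$ entering only in the connectivity estimate for the augmentation, and the openness of $G$ ensuring the relevant quotient maps are fibre bundles.

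One small point of comparison: the resolution actually used in Part~I is not built from ``tubes'' $P\times[0,1]$ running from components of the configuration out to the collar, but rather from tuples of components already sitting in standard position in disjoint sub-collars (a ``flag''-type resolution, closer in spirit to the resolutions of Randal-Williams and Galatius--Randal-Williams than to arc complexes). This avoids having to control isotopies of embedded $P\times[0,1]$'s, which would require a stronger dimension hypothesis; the connectivity argument instead pushes components into the collar one at a time and uses general position only to make copies of $P$ (not $P\times[0,1]$) disjoint from one another, which is where $2p+2\leq m-1$ suffices. Your tube-based resolution would work in principle but would likely force a worse dimension bound.
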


Recall that $p$ and $m$ are the dimensions of $P$ and $M$ respectively. We will lift this to the top horizontal map in \eqref{eq:square-stabilisation}, under a condition on the fibration $\pi \colon Z \to E$.

\begin{athm}\label{tlabelled}
The map $s_n \colon C_{nP}(M,Z;G) \to C_{\hat{n}P}(M,Z;G)$ induces split-injections on homology in all degrees. If the fibres of $\pi$ are path-connected, $p \leq \tfrac12(m-3)$ and $G$ is an open subgroup of $\mathrm{Diff}(P)$, then it induces isomorphisms on integral homology up to degree $\tfrac{n}{2} - 1$ and on homology with coefficients in a field up to degree~$\tfrac{n}{2}$.
\end{athm}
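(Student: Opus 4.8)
The plan is to exhibit $C_{nP}(M,Z;G)$ as the total space of a Serre fibration over $C_{nP}(M;G)$ whose fibre is a power of the fibre of $\pi$, and then to feed the homology of that fibre, regarded as a polynomial twisted coefficient system, into Theorem~\ref{ttwisted}. First I would build the fibration. The restriction $\pi_n\colon Z_n\to\mathrm{Emb}(nP,M(0))$ is a $(G\wr\Sigma_n)$-equivariant Serre fibration, and $G\wr\Sigma_n$ acts freely on the base (distinct disjoint submanifolds admit no nontrivial permutations among themselves, and an embedding has no nontrivial self-reparametrisations). Under hypothesis (b*), openness of $G$, the quotient map $\mathrm{Emb}(nP,M(0))\to\mathrm{Emb}(nP,M(0))/(G\wr\Sigma_n)$ is a locally trivial bundle, so passing to quotients of the equivariant fibration and then restricting to the relevant path-components produces a Serre fibration $p_n\colon C_{nP}(M,Z;G)\to C_{nP}(M;G)$. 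Writing $F$ for the fibre of $\pi$ over $i_0\in E$, and using that every submanifold occurring in $C_{nP}(M;G)$ is isotopic to the standard one and that $i_1,\dots,i_n$ lie in the path-component of $i_0$ in $E$ (joined to it along the shift paths $\mathrm{sh}_t\circ i_0=i_t$), the fibre of $p_n$ is homotopy equivalent to $F^{\times n}$; when $F$ is path-connected this is connected, which is exactly why $C_{nP}(M,Z;G)$ is a single path-component. The stabilisation map $s_n$ is covered by a map of fibrations which on fibres is $F^{\times n}\hookrightarrow F^{\times(n+1)}$, $x\mapsto(\bar{\imath}_0,x)$.

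Next I would organise the homology of the fibres into coefficient systems. For each $t\ge 0$, the rule $n\mapsto H_t(F^{\times n})$ underlies a twisted coefficient system $\mathcal H_t$ on the category governing the sequence $C_{nP}(M;G)$: automorphisms act by the monodromy of $p_n$, and the stabilisation morphisms act by crossing with the class $[\bar{\imath}_0]\in H_0(F)$. Making this genuinely functorial is precisely what the auxiliary data $(\bar{\gamma},\bar{\imath}_0)$ of Input~\ref{input-data} is designed to arrange: the shift homeomorphisms $\bar{\gamma}$ supply a canonical label $\bar{\imath}_r$ wherever a new submanifold is inserted, so that ``adding a submanifold'' has a well-defined effect on labelled homology. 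I then claim $\mathcal H_t$ is polynomial of degree $\le t$: by the Künneth theorem $H_t(F^{\times n})$ is a sum of terms distributing the total degree $t$ among the $n$ coordinates (each Tor term also consuming at least one unit of degree), and since $H_0(F)$ is the coefficient ring, at most $t$ coordinates can contribute nontrivially to any term --- this is the vanishing of the $(t{+}1)$-st cross-effect. Over a field the Künneth splitting has no Tor terms, which I would exploit for the sharper range.

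I would then compare the Serre spectral sequences of $p_n$ and of the analogous fibration over $C_{\hat{n}P}(M;G)$ along $s_n$. Their $E^2$-pages are $H_s(C_{nP}(M;G);\mathcal H_t)$ and $H_s(C_{\hat{n}P}(M;G);\mathcal H_t)$, and the induced maps are precisely the twisted stabilisation maps; so Theorem~\ref{ttwisted} applies, giving split-injectivity in all degrees and --- under hypotheses (a) and (b*) --- isomorphisms for $s\le\tfrac{n-t}{2}$. Plugging this into Zeeman's comparison theorem (an isomorphism on $E^2$ in total degrees $\le d$ together with a surjection in total degree $d+1$ forces an isomorphism on abutments through degree $d$) and tracking the worst case $t=0$, which forces $d+1\le\tfrac n2$, yields isomorphisms on $H_*(C_{nP}(M,Z;G))$ up to degree $\tfrac n2-1$. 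Over a field, the field-coefficient improvement to the surjectivity range of Theorem~\ref{ttwisted}, together with the Tor-free Künneth splitting, pushes the same computation to degree $\tfrac n2$. Split-injectivity of $s_n$ in all degrees (asserted with no hypotheses) I would deduce from the unconditional split-injectivity in Theorem~\ref{ttwisted}: those splittings are natural and compatible with the spectral-sequence differentials, so they assemble over the pages into a splitting of the map of spectral sequences and hence of the map on abutments.

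The main obstacle, I expect, is not the spectral-sequence bookkeeping but the verification that $n\mapsto H_t(F^{\times n})$ really is a polynomial coefficient system \emph{in the precise sense required by Theorem~\ref{ttwisted}} --- i.e.\ that the monodromy actions and the insertion maps cohere into a functor on the correct indexing category --- which is the conceptual heart of the reduction and the reason Input~\ref{input-data} carries the extra data $(\bar{\gamma},\bar{\imath}_0)$. A secondary point needing care is making the map $p_n$ an honest fibration after the quotient-by-$(G\wr\Sigma_n)$ and pass-to-component operations, and ensuring the split-injectivity argument still goes through (via the ambient symmetric-product model) when $p_n$ is only a homology fibration; but neither difficulty is deep.
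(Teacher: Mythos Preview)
Your overall architecture --- Serre fibration over $C_{nP}(M;G)$ with fibre $F^{\times n}$, the fibre-homology as a degree-$t$ polynomial coefficient system, Theorem~\ref{ttwisted} on the $E^2$-page, then Zeeman --- is exactly what the paper does. Two points, however, diverge from the paper and one of them is a real gap.

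\textbf{Split-injectivity.} Your proposed route, pushing the Theorem~\ref{ttwisted} splittings through the Serre spectral sequence, does not work as stated. Split-injectivity on each row of $E^2$ does not automatically survive to later pages (the splittings would have to be compatible with differentials $d_r$ that mix different values of $t$), and even split-injectivity on $E^\infty$ does not give split-injectivity on the abutment because of filtration extension problems. The paper does \emph{not} attempt this; it proves split-injectivity directly and independently of the spectral sequence, by constructing transfer-type maps
\[
C_{nP}(M,Z;G) \longrightarrow \mathrm{Sp}^{\binom{n}{k}}\bigl(C_{kP}(M,Z;G)\bigr)
\]
(forgetting $n-k$ labelled submanifolds in all possible ways, using the symmetric-product model of Definition~\ref{d:labelled-disconn-submfld}) and applying Dold's lemma. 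You allude to this model as a ``secondary point needing care'', but it is in fact the primary (and only) argument for this half of the theorem.

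\textbf{Integral coefficients.} Your plan to handle $\bZ$ directly by arguing that K{\"u}nneth Tor-terms also ``consume a unit of degree'' may be salvageable, but the paper takes a cleaner route: it proves the polynomiality of $n\mapsto H_t(F^{\times n};R)$ only under the flatness hypothesis (so in particular when $R$ is a field), runs the spectral-sequence comparison to get the field-coefficient range $\tfrac{n}{2}$, and then deduces the integral range $\tfrac{n}{2}-1$ from the field case by the five-lemma applied to the coefficient short exact sequences $0\to\bZ/p^r\to\bZ/p^{r+1}\to\bZ/p\to 0$ and $0\to\bZ\to\bQ\to\bQ/\bZ\to 0$. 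This avoids having to control the functoriality of Tor-terms.
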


\begin{rmk}
If a map induces isomorphisms on homology (up to a certain degree) with coefficients in any field, then it also induces isomorphisms up to the same degree with coefficients in any ring that may be constructed from fields by iterated (ring) extensions and colimits. In particular, $\bQ/\bZ$ is such a ring, so the conclusion of the above theorem also implies that $s_n$ induces isomorphisms on homology with $\bQ/\bZ$ coefficients up to degree $\tfrac{n}{2}$.
\end{rmk}

\section{Proof of stability for symmetric diffeomorphism groups}\label{s:proof-symm-diff-groups}

We will deduce Theorem \ref{tmain} from Theorem \ref{tlabelled} by a spectral sequence comparison argument. First we need some more constructions to set up the appropriate map of spectral sequences.

\subsection{Some fibre bundles}

\begin{notation}
For any real number $r$ we will write $\bR^{\infty,r} = \bR^\infty \times [r,\infty)$. Most of the time $r$ will be $0$, $-1$ or $-2$.
\end{notation}

Fix an embedding $b \colon \partial M \hookrightarrow \bR^\infty$. Write $\mathrm{Emb}_b(M,\rinftyz)$ for the space of embeddings $e \colon M \hookrightarrow \rinftyz$ such that
\begin{itemizeb}
\item[(i)] $e \circ \mathrm{\colhat}|_{\partial M \times [0,\epsilon)} = b \times \mathrm{incl}$ for some $\epsilon > 0$.
\end{itemizeb}
We then define
\[
X_n \subseteq \mathrm{Emb}_b(M,\rinftyz) \times \mathrm{Emb}(M \underset{nP}{\sharp} nN,\rinftyz)
\]
to be the subspace of pairs of embeddings $(e,f)$ such that
\begin{itemizeb}
\item[(ii)] $e|_{M \smallsetminus \Phi(nT')} = f|_{M \smallsetminus \Phi(nT')}$.
\end{itemizeb}
Note that there is a continuous action of the symmetric diffeomorphism group $\Sigma_H \mathrm{Diff}(M \underset{nP}{\sharp} nN)$ on $X_n$ by precomposition in each factor (and the homomorphism \eqref{eq:comparision-hom} for the first factor). Similarly, we write $\mathrm{Emb}_b(\hat{M},\rinftym)$ for the space of embeddings $e \colon \hat{M} \hookrightarrow \rinftym$ such that
\begin{itemizeb}
\item[(\^{i})] $e \circ \mathrm{\colhat}|_{\partial M \times [-1,-1+\epsilon)} = b \times \mathrm{incl}$ for some $\epsilon > 0$
\end{itemizeb}
and we define
\[
X_{\hat{n}} \subseteq \mathrm{Emb}_b(\hat{M},\rinftym) \times \mathrm{Emb}(M \underset{\hat{n}P}{\sharp} \hat{n}N,\rinftym)
\]
to be the subspace of pairs of embeddings $(e,f)$ such that
\begin{itemizeb}
\item[(\^{i}\^{i})] $e|_{\hat{M} \smallsetminus \hat{\Phi}(\hat{n}T')} = f|_{\hat{M} \smallsetminus \hat{\Phi}(\hat{n}T')}$.
\end{itemizeb}
There is a continuous action of the symmetric diffeomorphism group $\Sigma_H \mathrm{Diff}(M \underset{\hat{n}P}{\sharp} \hat{n}N)$ on $X_{\hat{n}}$, given by precomposition in each factor and the right-hand vertical map of \eqref{eq:symdiff-square} for the first factor.

\begin{lem}\label{l:pbundle}
The spaces $X_n$ and $X_{\hat{n}}$ are contractible and the quotient maps
\[
X_n \longrightarrow X_n / \Sigma_H \mathrm{Diff}(M \underset{nP}{\sharp} nN) \qquad\qquad X_{\hat{n}} \longrightarrow X_{\hat{n}} / \Sigma_H \mathrm{Diff}(M \underset{\hat{n}P}{\sharp} \hat{n}N)
\]
are principal bundles with structure groups $\Sigma_H \mathrm{Diff}(M \underset{nP}{\sharp} nN)$ and $\Sigma_H \mathrm{Diff}(M \underset{\hat{n}P}{\sharp} \hat{n}N)$ respectively.
\end{lem}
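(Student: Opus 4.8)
The plan is to verify that each of the two quotient maps is a principal bundle by exhibiting it as a restriction of the classical "embeddings modulo diffeomorphisms" principal bundle, and then to check contractibility of the total space by a parametrised tubular-neighbourhood argument. I will carry out the argument for $X_n$; the case of $X_{\hat{n}}$ is identical with the obvious changes ($M \rightsquigarrow \hat{M}$, $n \rightsquigarrow \hat{n}$, $\rinftyz \rightsquigarrow \rinftym$, and the shift by $-1$ in the collar).

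\textbf{Step 1: $X_n$ is contractible.} The projection $X_n \to \mathrm{Emb}_b(M,\rinftyz)$ onto the first factor has fibre over $e$ the space of embeddings $f \colon M \mathbin{\underset{nP}{\sharp}} nN \hookrightarrow \rinftyz$ agreeing with $e$ on $M \smallsetminus \Phi(nT')$. Since $M \mathbin{\underset{nP}{\sharp}} nN$ is obtained from $M \smallsetminus \Phi(nT')$ by gluing on a compact manifold-with-corners along part of its boundary, extending a fixed embedding of the former amounts to choosing an embedding of the glued-on piece with prescribed behaviour near the gluing locus; by the isotopy extension theorem and the parametrised existence/uniqueness of tubular neighbourhoods (applied to a neighbourhood of $e(M \smallsetminus \Phi(n\mathring{T}))$ in $\rinftyz$), this fibre is contractible -- indeed it deformation retracts onto a point by a straight-line homotopy in $\rinftyz$ after choosing a standard model. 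The base $\mathrm{Emb}_b(M,\rinftyz)$ is contractible because embeddings into $\bR^\infty$ with a fixed boundary condition form a contractible space (Whitney embedding plus a general-position/convexity argument, exactly as in the untwisted setting of \cite{Palmer2018HomologicalstabilitymoduliI}). A map with contractible base and contractible fibres, which one checks is a fibration (again via isotopy extension), has contractible total space.

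\textbf{Step 2: the quotient map is a principal bundle.} The group $\Sigma_H \mathrm{Diff}(M \mathbin{\underset{nP}{\sharp}} nN)$ acts on $X_n$ by precomposition on the second factor and, via the homomorphism \eqref{eq:comparision-hom}, on the first. The action is free: if $(e,f) = (e \circ \psi', f \circ \psi)$ then $f = f \circ \psi$ with $f$ an embedding forces $\psi = \mathrm{id}$. To produce local sections of the quotient, fix $(e_0,f_0) \in X_n$ and use a tubular neighbourhood of the image $f_0(M \mathbin{\underset{nP}{\sharp}} nN)$ together with the compatibility condition (ii): any $(e,f)$ near $(e_0,f_0)$ determines, by projecting $f$ fibrewise onto $f_0$ along the normal directions, a symmetric diffeomorphism carrying $(e_0,f_0)$ to $(e,f)$, depending continuously on $(e,f)$; this is a standard slice argument for diffeomorphism-group actions on embedding spaces (\cf the Palais–Cerf local-section theorem), and the $\Sigma_H$-symmetry is automatic because condition (ii) forces the resulting diffeomorphism to preserve $n(\mathring{T}\smallsetminus T')$ and to act there through $H \wr \Sigma_n$ -- the constraint that $e = f$ off $\Phi(nT')$ is exactly what pins down the fibrewise ($H$-linear) behaviour near the gluing region. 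Hence the quotient map admits local sections, so it is a principal $\Sigma_H \mathrm{Diff}(M \mathbin{\underset{nP}{\sharp}} nN)$-bundle.

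\textbf{Main obstacle.} The delicate point is Step 2: one must check that the diffeomorphism produced by the local slice genuinely lies in the \emph{symmetric} subgroup, i.e.\ that it acts through $H \wr \Sigma_n$ (not merely through $\mathrm{Diff}_{\mathrm{fib}}(T) \wr \Sigma_n$) on $n(\mathring T \smallsetminus T')$, and is the identity near $\partial M$. The identity-near-$\partial M$ condition follows from the boundary condition (i) (both $e$ and $f$ are pinned to $b \times \mathrm{incl}$ on the collar). The $H$-constraint is the subtle part: it must be read off from the interplay of conditions (ii) and the pullback square \eqref{eq:pullback-symm-boundary}, using that the chosen $(e,f)$-pair already encodes, via the first factor and \eqref{eq:comparision-hom}, a diffeomorphism in $\Sigma_H \mathrm{Diff}(M,nT)$, whose fibrewise part over the gluing locus is by definition valued in $H$. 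Making this bookkeeping precise -- so that the local section lands in the correct group -- is where the real work of the lemma lies; the contractibility in Step 1 and the freeness are routine by comparison.
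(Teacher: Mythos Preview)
Your approach differs substantially from the paper's, and the difference is instructive. The paper does not attempt to build local sections into the symmetric diffeomorphism group directly. Instead it observes that the large group $\mathrm{Diff}_\partial(\rinftyz)$ acts on both $X_n$ and its quotient by postcomposition, and invokes a mild extension of Proposition~4.15 of \cite{Palmer2018HomologicalstabilitymoduliI} to conclude that this action on the quotient is \emph{locally retractile}; Proposition~4.8 of \cite{Palmer2018HomologicalstabilitymoduliI} then gives the principal bundle structure immediately. (Contractibility is not even argued; it is taken as a standard fact about embeddings into $\bR^\infty$.) The point is that the ambient group $\mathrm{Diff}_\partial(\rinftyz)$ acts equivariantly on the whole picture, so the local retraction in the target lifts for free to a local trivialisation of the quotient map --- no bookkeeping about the structure group is required.

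This completely sidesteps what you correctly identify as the ``main obstacle'' in your approach: verifying that the Palais--Cerf slice diffeomorphism lands in the \emph{symmetric} subgroup $\Sigma_H\mathrm{Diff}$ rather than merely in the full diffeomorphism group. Your sketch of why the $H$-constraint should hold is not convincing as written: the first factor $e$ is just an embedding, not a diffeomorphism, so the claim that the pair ``already encodes, via the first factor and \eqref{eq:comparision-hom}, a diffeomorphism in $\Sigma_H\mathrm{Diff}(M,nT)$'' does not parse. What condition~(ii) actually buys you is that the diffeomorphism $f_0^{-1}\circ p\circ f$ you produce preserves the submanifold $n(\mathring{T}\smallsetminus T')$ setwise and acts fibrewise-linearly there (because both $e$ and $e_0$ extend linearly across $\Phi(nT')$), which gives $\mathrm{Diff}_{\mathrm{fib}}(T)\wr\Sigma_n$; but nothing in your argument forces the action into the subgroup $H\wr\Sigma_n$. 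One can repair this by using the first factor as well (projecting $e$ onto $e_0$ simultaneously and checking consistency), but this needs to be spelled out, and once you do so you are essentially reinventing the locally-retractile machinery. The paper's route is both shorter and avoids this trap entirely.
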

\begin{proof}
The contractibility of the spaces $X_n$ and $X_{\hat{n}}$ may be seen by the usual argument for the contractibility of spaces of embeddings into $\bR^\infty$: any family of such embeddings parametrised by $S^i$ is contained in some finite-dimensional subspace of $\bR^\infty$ (by compactness of $S^i$), and this may be used to extend it to $D^{i+1}$.

A mild extension of Proposition 4.15 of \cite{Palmer2018HomologicalstabilitymoduliI} shows that the action of $\mathrm{Diff}_\partial(\rinftyz)$ on the quotient $X_n / \Sigma_H \mathrm{Diff}(M \underset{nP}{\sharp} nN)$ is locally retractile. Then Proposition 4.8 of \cite{Palmer2018HomologicalstabilitymoduliI} implies that the projection of $X_n$ onto this quotient is a principal bundle. An identical argument implies that the other projection is also a principal bundle.
\end{proof}

This gives us explicit models for the classifying spaces of $\Sigma_H \mathrm{Diff}(M \underset{nP}{\sharp} nN)$ and $\Sigma_H \mathrm{Diff}(M \underset{\hat{n}P}{\sharp} \hat{n}N)$.

Write $\mathrm{Diff}_\partial(M)$ for the group of diffeomorphisms of $M$ that act by the identity on a neighbourhood of its boundary. There is a forgetful map
\begin{equation}\label{eq:main-fibre-bundle}
\Psi \colon X_n / \Sigma_H \mathrm{Diff}(M \underset{nP}{\sharp} nN) \longrightarrow \mathrm{Emb}_b(M,\rinftyz) / \mathrm{Diff}_\partial(M),
\end{equation}
and an analogous map $\hat{\Psi}$, replacing $n$ with $\hat{n}$ and $\rinftyz$ with $\rinftym$.

\begin{lem}\label{l:bundlepsi}
The maps $\Psi$ and $\hat{\Psi}$ are fibre bundles.
\end{lem}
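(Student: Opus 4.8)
The plan is to exhibit $\Psi$ as the map on quotients induced by a map of spaces carrying compatible free group actions, and then appeal to the ``locally retractile'' machinery from \cite{Palmer2018HomologicalstabilitymoduliI} exactly as in the proof of Lemma \ref{l:pbundle}. First I would identify the fibre of $\Psi$ over the orbit of an embedding $e_0 \colon M \hookrightarrow \rinftyz$: it consists, up to the action of the subgroup of $\mathrm{Diff}_\partial(M)$ fixing $e_0$ (which is trivial, since an embedding has no automorphisms acting by precomposition), of pairs $(e_0,f)$ with $f$ an embedding of $M \sharp_{nP} nN$ agreeing with $e_0$ away from $\Phi(nT')$, taken modulo $\Sigma_H\mathrm{Diff}(M \sharp_{nP} nN)$ acting through the subgroup that fixes $e_0$, i.e. through $\mathrm{Diff}_H(N)\wr\Sigma_n$ via the right-hand vertical map of \eqref{eq:pullback-symm-boundary}. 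So the fibre should be a moduli space of ``decorations'' of the fixed embedded copy of $M$ by $n$ copies of $N$ glued along the parallel tubular neighbourhoods, modulo $H$-symmetry.

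Concretely, I would factor $\Psi$ through the intermediate quotient $\mathrm{Emb}_b(M,\rinftyz)/\Sigma_H\mathrm{Diff}(M,nT)$: the homomorphism \eqref{eq:comparision-hom} and the pullback square \eqref{eq:pullback-symm-boundary} let one write $X_n/\Sigma_H\mathrm{Diff}(M\sharp_{nP}nN)$ as a fibre bundle over $\mathrm{Emb}_b(M,\rinftyz)/\Sigma_H\mathrm{Diff}(M,nT)$ with fibre a classifying-space-type quotient for $\mathrm{Diff}_H(N)\wr\Sigma_n$, and then observe that $\mathrm{Emb}_b(M,\rinftyz)/\Sigma_H\mathrm{Diff}(M,nT) \to \mathrm{Emb}_b(M,\rinftyz)/\mathrm{Diff}_\partial(M)$ is a fibre bundle because the residual action of $\mathrm{Diff}_\partial(M)$ (through which the second quotient is formed) is again a free, locally retractile action after passing to the first quotient. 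Both statements reduce to: a locally retractile action on a space descends to a locally retractile action on a quotient by a commuting group of homeomorphisms, together with Proposition 4.8 of \cite{Palmer2018HomologicalstabilitymoduliI} (a principal bundle / free locally retractile quotient is a fibre bundle), and a mild extension of Proposition 4.15 of \cite{Palmer2018HomologicalstabilitymoduliI} to produce the local sections. The argument for $\hat{\Psi}$ is verbatim the same with $\rinftyz$ replaced by $\rinftym$, $n$ by $\hat{n}$, and $\Phi$ by $\hat{\Phi}$.

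The main obstacle I expect is purely bookkeeping rather than conceptual: one must check that the local retractions supplied by (the extension of) Proposition 4.15 of \cite{Palmer2018HomologicalstabilitymoduliI} for the $\mathrm{Diff}_\partial(\rinftyz)$-action are compatible with the various quotients, i.e. that one can choose the ``extension of an embedding of $M$ to an ambient isotopy'' so that it also moves, coherently, the attached copies of $N$ and the tubular neighbourhoods $\Phi(nT')$ — this is exactly where the hypothesis in Definition \ref{d:symm-diff-group} that symmetric diffeomorphisms are the identity near $\partial M$ (but unconstrained near the copies of $\partial N$) is used, so that the attached $N$-pieces are free to be carried along. Once the local retractility is in place over the base $\mathrm{Emb}_b(M,\rinftyz)/\mathrm{Diff}_\partial(M)$, local triviality of $\Psi$ follows formally, and the fibre is the same over every point of a path-component of the base.
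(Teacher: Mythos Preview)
Your proposal would eventually work, but it takes a considerably longer route than the paper does, and mixes two strategies without clearly committing to either. The paper's proof is a two-line application of Palais' Theorem~A (Proposition~4.7 of \cite{Palmer2018HomologicalstabilitymoduliI}): the ambient group $\mathrm{Diff}_\partial(\rinftyz)$ acts on both source and target of $\Psi$ by postcomposition, the map $\Psi$ is equivariant for this action, and a mild extension of Proposition~4.15 of \cite{Palmer2018HomologicalstabilitymoduliI} shows the action on the target $\mathrm{Emb}_b(M,\rinftyz)/\mathrm{Diff}_\partial(M)$ is locally retractile. Palais then gives local triviality of $\Psi$ immediately. No factorisation through the intermediate quotient $\mathrm{Emb}_b(M,\rinftyz)/\Sigma_H\mathrm{Diff}(M,nT)$ is needed, and no analysis of the fibre is needed here --- the identification of the fibre with $C_{nP}(M,Z;G)$ is carried out separately and at length in Proposition~\ref{p:identify-fibres}.

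Two remarks on your plan specifically. First, if you do factor $\Psi$ as a composite of two maps and show each is a fibre bundle, you only conclude that $\Psi$ is a Serre fibration, not a fibre bundle (the paper makes exactly this point in the proof of Lemma~\ref{l:path-connected}); this would suffice for the spectral sequence argument later, but it is weaker than the lemma as stated. Second, your final paragraph does land on the right idea --- local retractility of the $\mathrm{Diff}_\partial(\rinftyz)$-action on the base --- but once you have that, Palais applies to $\Psi$ directly and the entire factorisation through $\Sigma_H\mathrm{Diff}(M,nT)$ becomes superfluous. The ``bookkeeping obstacle'' you anticipate (coherently moving the attached copies of $N$) is automatically handled because the ambient group acts by postcomposition on the pair $(e,f)$, so it moves everything simultaneously.
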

\begin{proof}
As above, by a mild extension of Proposition 4.15 of \cite{Palmer2018HomologicalstabilitymoduliI}, the action of $\mathrm{Diff}_\partial(\rinftyz)$ on the quotient $\mathrm{Emb}_b(M,\rinftyz)/\mathrm{Diff}_\partial(M)$ is locally retractile, and then Theorem A of \cite{Palais1960Localtrivialityof} (see also Proposition 4.7 of \cite{Palmer2018HomologicalstabilitymoduliI}) implies that $\Psi$ is a fibre bundle (and an identical argument implies the same for $\hat{\Psi}$).
\end{proof}

\subsection{Moduli spaces of submanifolds labelled by parametric-connected-sum-data}

Recall from \S\ref{s:symm-diff-groups} that $K = H \cap \mathrm{ker}(z) \leq H \leq \mathrm{Diff}_{\mathrm{fib}}(T)$, and that $G = z(H) \leq \mathrm{Diff}(P)$, where
\[
z \colon \mathrm{Diff}_{\mathrm{fib}}(T) \longrightarrow \mathrm{Diff}(P)
\]
is the restriction along the zero-section $o \colon P \hookrightarrow T$ of $\xi \colon T \to P$.

\begin{notation}\label{not:nprime}
Write $N' = N \smallsetminus \tau_j(T')$ and $U = \mathring{T} \smallsetminus T'$. There is an involution
\[
\sigma \colon U \longrightarrow U
\]
given by $(x,t) \mapsto (x,1.5-t)$, where we identify $U$ with $\partial T \times (0.5,1)$ (\cf Definition \ref{d:param-conn-sum}). Note also that $\tau_j$ restricts to an embedding $U \hookrightarrow N'$.
\end{notation}

\begin{defn}
The subgroup $\mathrm{Diff}_K(N')$ of $\mathrm{Diff}(N')$ consists of those diffeomorphisms that send the subset $\tau_j(U) \subset N'$ to itself and act on it via $\tau_j K \tau_j^{-1}$.
\end{defn}

\begin{construction}\label{construction:Z}
Fix an embedding $e_0 \in \mathrm{Emb}_b(M,\rinftyz)$. For convenience, we assume that
\begin{itemizeb}
\item[(a)] $e_0(\mathrm{col}(x,t)) = (b(x),t)$ for $(t,x) \in \partial M \times [0,1]$,
\item[(b)] $e_0(M \smallsetminus \mathrm{col}(\partial M \times [0,1])) \subseteq \bR^\infty \times (1,\infty)$.
\end{itemizeb}
The fact that we are making this assumption will not cause problems later, since $\mathrm{Emb}_b(M,\rinftyz)$ is path-connected (in fact contractible). Also, it will be convenient to extend $\hat{M}$ slightly further to
\[
\doublehat{M} = M \underset{\partial M \times [0,\infty]}{\cup} (\partial M \times [-2,\infty]),
\]
and write $\coldoublehat \colon \partial M \times [-2,\infty] \hookrightarrow \doublehat{M}$ for the inclusion of the right-hand side of this pushout. We may extend $e_0$ to an embedding
\[
\doublehat{e}_0 \colon \doublehat{M} \lhto \rinftymt,
\]
defining $\doublehat{e}_0(\coldoublehat(x,t)) = (b(x),t)$ for $(x,t) \in \partial M \times [-2,0]$. There is a diagram of topological spaces
\begin{equation}\label{eq:pullback-diagram}
\centering
\begin{split}
\begin{tikzpicture}
[x=1mm,y=1mm]
\node (tr) at (50,15) {$\mathrm{Emb}(N',\rinftymt) / \mathrm{Diff}_K(N')$};
\node (bl) at (0,0) {$\mathrm{Emb}^c(T,\doublehat{M})/K$};
\node (br) at (50,0) {$\mathrm{Emb}(U,\rinftymt) / K,$};
\draw[->] (tr) to (br);
\draw[->] (bl) to (br);
\end{tikzpicture}
\end{split}
\end{equation}
where $\mathrm{Emb}^c(T,\doublehat{M}) \subset \mathrm{Emb}(T,\doublehat{M})$ is the subspace of embeddings $T \hookrightarrow \doublehat{M}$ such that the image of the zero-section $o(P) \subset T$ is contained in $\hat{M} \subset \doublehat{M}$. The vertical map is given by precomposition by $\tau_j \circ \sigma$ and the horizontal map is given by postcomposition by $\doublehat{e}_0$ (and restriction of the domain).
\end{construction}

\begin{lem}
The diagram \eqref{eq:pullback-diagram} is a diagram of right $G$-spaces with respect to the following well-defined actions. For the bottom two spaces, the action of $g \in G$ is given by choosing any element $h \in H$ such that $z(h)=g$ and acting by precomposition by $h$. For the top-right space, the action is given by first choosing any element $h \in H$ such that $z(h)=g$ and then any diffeomorphism of $N'$ whose restriction to $U$ along the embedding $\tau_j$ is $h$.
\end{lem}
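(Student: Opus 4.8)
The plan is to check, one space and one map at a time: that each prescribed recipe really does produce a well-defined self-map of the relevant quotient for every $g\in G$ (independence of the auxiliary choices), that these assemble into a continuous right $G$-action, and finally that the two maps of \eqref{eq:pullback-diagram} intertwine the actions. Two structural facts do essentially all the work. First, $K=H\cap\kernel(z)$ is exactly the kernel of $z$ restricted to $H$, so $K\trianglelefteq H$ and $z$ induces an isomorphism $H/K\cong G$; in particular $h^{-1}Kh=K$ for every $h\in H$, which is the identity behind every ``independence of choices'' step. Second, every $h\in H\leq\mathrm{Diff}_{\mathrm{fib}}(T)$ restricts to a linear \emph{isometry} on each fibre of $\xi$, hence preserves fibrewise norm; therefore it preserves each of $T'$, $\mathring T$, $\partial T$, $U=\mathring T\smallsetminus T'$ and the zero-section $o(P)$, and — writing $U\cong\partial T\times(0.5,1)$ — it acts only on the $\partial T$-coordinate while $\sigma$ acts only on the $(0.5,1)$-coordinate, so $h$ and $\sigma$ (and any $k\in K$ and $\sigma$) commute on $U$. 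I would also invoke hypothesis (c) of \S\ref{s:hs} to supply, for each $h\in H$, the diffeomorphism of $N'$ restricting to $\tau_j h\tau_j^{-1}$ on $\tau_j(U)$: restrict the extension over $N$ given by (c) to $N'=N\smallsetminus\tau_j(T')$, which is preserved because $h$ preserves $T'$.

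For the two bottom spaces, fix $g$ and $h,h'\in H$ with $z(h)=z(h')=g$. Precomposition by $h$ carries $\mathrm{Emb}^c(T,\doublehat{M})$ to itself (it fixes $o(P)$ setwise, hence preserves the condition $o(P)\subset\hat M$) and $\mathrm{Emb}(U,\rinftymt)$ to itself; it descends to the $K$-quotient since for $k\in K$ one has $k\circ h=h\circ(h^{-1}kh)$ with $h^{-1}kh\in K$; and it is independent of $h$ because $h^{-1}h'\in K$, so $\phi\circ h'$ and $\phi\circ h$ lie in the same $K$-orbit. The assignment $g\mapsto(-\circ h)$ is a right action because $z(h_1h_2)=g_1g_2$, and it is continuous, being the action induced through $G\cong H/K$ by the evidently continuous precomposition action of $H$. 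Equivariance of the horizontal map $[\phi]\mapsto[\doublehat{e}_0\circ\phi|_U]$ is then immediate from $(\phi\circ h)|_U=\phi|_U\circ h|_U$ (valid since $h(U)=U$) together with the same computation done for $K$ to see the map is well-defined at all.

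The delicate case is the top-right space $\mathrm{Emb}(N',\rinftymt)/\mathrm{Diff}_K(N')$, where $\mathrm{Diff}_K(N')$, the chosen extension $\psi$, $h$ and $k$ must all be tracked at once. Given a second extension $\psi'$ of some $h'$ with $z(h')=g$, the composite $\psi^{-1}\psi'$ fixes $\tau_j(U)$ setwise and restricts there to $\tau_j(h^{-1}h')\tau_j^{-1}$ with $h^{-1}h'\in K$, so $\psi^{-1}\psi'\in\mathrm{Diff}_K(N')$ and $[\psi_0\circ\psi']=[\psi_0\circ\psi]$ — this settles independence of both $\psi$ and $h$ simultaneously. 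Descent to the $\mathrm{Diff}_K(N')$-quotient uses $d\circ\psi=\psi\circ(\psi^{-1}d\psi)$ with $\psi^{-1}d\psi\in\mathrm{Diff}_K(N')$ (restricting to $\tau_j(h^{-1}kh)\tau_j^{-1}$ on $\tau_j(U)$). Since $\psi_1\circ\psi_2$ extends $h_1h_2$, this is a right action, and continuity follows as before. For equivariance of the vertical map $[\psi_0]\mapsto[\psi_0\circ\tau_j\circ\sigma]$: acting by $g$ then applying the map gives $[\psi_0\circ\psi\circ\tau_j\circ\sigma]$, and since $\psi\circ\tau_j|_U=\tau_j\circ h|_U$ and $h\circ\sigma=\sigma\circ h$ on $U$, this equals $[\psi_0\circ\tau_j\circ\sigma\circ h]$, i.e. the map followed by the $g$-action. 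The same commutation $k\sigma=\sigma k$ is exactly what makes the vertical map well-defined on the $K$-quotient.

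I expect the main obstacle to be precisely this top-right computation: one is working with a \emph{two-sided} quotient, and keeping the left/right conventions aligned while threading $h$, $k$, $d$ and $\psi$ through the conjugations takes care. The only genuinely non-formal ingredient anywhere is the commutation of elements of $H$ (and $K$) with $\sigma$ on $U$ — the one place the \emph{geometry} of $\mathrm{Diff}_{\mathrm{fib}}(T)$, not merely the group theory of $K\trianglelefteq H$, is used; once that and the identity $h^{-1}Kh=K$ are in hand, every assertion of the lemma reduces to a short bookkeeping check.
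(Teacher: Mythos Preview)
Your proof is correct and follows essentially the same approach as the paper: the paper packages the well-definedness arguments into two short exact sequences $1\to K\to H\to G\to 1$ and $1\to\mathrm{Diff}_K(N')\to\mathrm{Diff}_H(N')\to G\to 1$, which encode exactly the conjugation identities $h^{-1}Kh=K$ and $\psi^{-1}\mathrm{Diff}_K(N')\psi=\mathrm{Diff}_K(N')$ that you spell out, and it isolates the same geometric fact (the commutation of $H$ with $\sigma$ on $U\cong\partial T\times(0.5,1)$) as the one non-formal ingredient needed for equivariance of the vertical map. Your version is simply a more explicit unwinding of the same argument.
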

\begin{proof}
The well-defined-ness of the described $G$-actions on the bottom two spaces follows from the fact that we have a short exact sequence
\[
1 \to K \lhto H \xrightarrow{\, z|_H \,} G \to 1,
\]
and the horizontal map of \eqref{eq:pullback-diagram} is clearly equivariant with respect to these actions. By one of our assumptions just before the statement of Theorem \ref{tmain} on page \pageref{tmain}, the map $\mathrm{Diff}_H(N') = \mathrm{Diff}_H(N) \to H$ given by restriction along the embedding $\tau_j$ is surjective. It is therefore also surjective after composing with $z|_H \colon H \to G$, and we have another short exact sequence
\[
1 \to \mathrm{Diff}_K(N') \lhto \mathrm{Diff}_H(N') \longrightarrow G \to 1,
\]
which shows that the described $G$-action on the top-right space of \eqref{eq:pullback-diagram} is well-defined. To see that the vertical map of \eqref{eq:pullback-diagram} is equivariant, we note that the action of $H \leq \mathrm{Diff}_{\mathrm{fib}}(T)$ on $U \subset T$ commutes with the involution $\sigma \colon U \to U$. To see this, note that, under the identification $U \cong \partial T \times (0.5,1)$, the $H$-action is trivial on the second component and the involution $\sigma$ is trivial on the first component.
\end{proof}

\begin{defn}
Let $Z$ be the pullback in the category of topological spaces of the diagram \eqref{eq:pullback-diagram}. Since this is a diagram of right $G$-spaces, $Z$ is also a right $G$-space if we give it the diagonal action. In other words, we take $Z$ to be the pullback in the category of right $G$-spaces.
\end{defn}

\begin{lem}\label{l:path-connected}
The composite map
\[
\pi \colon Z \longrightarrow \mathrm{Emb}^c(T,\doublehat{M})/K \longrightarrow \mathrm{Emb}(P,\hat{M}) = E
\]
is a Serre fibration with path-connected fibres.
\end{lem}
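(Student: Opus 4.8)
The plan is to factor $\pi$ as a composite of two fibre bundles and then to compute its fibre.

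By construction $Z$ is the (ordinary topological) pullback of the cospan in \eqref{eq:pullback-diagram}, so $\pi$ is the composite
\[
Z \xrightarrow{\;\mathrm{pr}\;} \mathrm{Emb}^c(T,\doublehat{M})/K \xrightarrow{\;o^*\;} \mathrm{Emb}(P,\hat{M}) = E,
\]
where $\mathrm{pr}$ is the projection onto the bottom-left vertex of \eqref{eq:pullback-diagram} and $o^*$ is restriction along the zero-section $o\colon P \hookrightarrow T$ (well defined after the quotient by $K \leq \mathrm{ker}(z)$, since elements of $K$ fix $o(P)$ pointwise). I would prove that both $o^*$ and the vertical map of \eqref{eq:pullback-diagram} are fibre bundles by the method of Lemmas~\ref{l:pbundle} and~\ref{l:bundlepsi}: postcomposition by ambient diffeomorphisms of $\hat{M}$ (respectively of $\rinftymt$) defines compatible actions on the source and target of $o^*$ (respectively of the vertical map) with respect to which the map is equivariant; the action on the target is locally retractile by a mild extension of Proposition~4.15 of \cite{Palmer2018HomologicalstabilitymoduliI} (for $o^*$, by isotopy extension); and Proposition~4.7 of \cite{Palmer2018HomologicalstabilitymoduliI} then yields local triviality. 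Since the vertical map of \eqref{eq:pullback-diagram} is a fibre bundle, so is its pullback $\mathrm{pr}$ along the horizontal map, and hence $\pi = o^* \circ \mathrm{pr}$ is a composite of fibre bundles and in particular a Serre fibration.

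To see that the fibres are path-connected, fix $\varphi \in E$ and analyse $\pi^{-1}(\varphi)$ in two stages. The fibre of $o^*$ over $\varphi$ is the space of extensions of $\varphi$ to an embedding of $T$, modulo $K$; by the tubular neighbourhood theorem it is homotopy equivalent to the space of $O(m-p)$-bundle isomorphisms $\xi \xrightarrow{\cong} \nu(\varphi)$ modulo $K$, which is empty unless $\nu(\varphi) \cong \xi$ and is otherwise (non-canonically) homeomorphic to the coset space $\mathrm{ker}(z)/K$, hence path-connected by hypothesis~(b). The fibre of the vertical map of \eqref{eq:pullback-diagram} over a point in the image of the horizontal map is computed using that $\mathrm{Diff}_K(N') \to K$ is surjective: given $k \in K$, hypothesis~(c) provides a diffeomorphism of $N$ preserving $\tau_j(T)$ and restricting there to $\tau_j k \tau_j^{-1}$, and since $k$ is a fibrewise isometry it preserves $T'$, so this diffeomorphism preserves $\tau_j(T')$ and restricts to an element of $\mathrm{Diff}_K(N')$ lying over $k$; combining this surjectivity with the weak contractibility of $\mathrm{Emb}(N',\rinftymt)$ and of the space of embeddings $N' \hookrightarrow \rinftymt$ with a prescribed restriction to $\tau_j(U)$, that fibre is weakly equivalent to the classifying space of the topological group $\mathrm{ker}(\mathrm{Diff}_K(N') \to K)$, which is path-connected. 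Finally, $\pi^{-1}(\varphi)$ is the restriction of the fibre bundle $\mathrm{pr}$ over the fibre of $o^*$ at $\varphi$, so it is the total space of a fibre bundle whose base and fibre are the two path-connected spaces just described; the exact sequence $\pi_0(\text{fibre}) \to \pi_0(\pi^{-1}(\varphi)) \to \pi_0(\text{base})$ then shows $\pi^{-1}(\varphi)$ is path-connected. (It is empty precisely over those components of $E$ consisting of embeddings whose normal bundle is not isomorphic to $\xi$; in particular it is non-empty over the component of $i_0$, which is the one used in Definition~\ref{d:labelled-disconn-submfld}.)

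The main obstacle is the claim that the vertical map of \eqref{eq:pullback-diagram} is a fibre bundle. The submanifold $U = \mathring{T} \smallsetminus T' \cong \partial T \times (0.5,1)$ along which one restricts is non-compact, so isotopy extension for a compact submanifold does not apply directly; instead one must verify that, after forming the quotients by $K$ and by $\mathrm{Diff}_K(N')$, the relevant ambient $\mathrm{Diff}_\partial(\rinftymt)$-actions still descend, remain equivariant over the base, and stay locally retractile — this is where adapting \S4 of \cite{Palmer2018HomologicalstabilitymoduliI} requires care. Once the bundle structures are established, identifying the two fibres and assembling them is routine, with hypotheses~(b) and~(c) entering exactly at the two points indicated.
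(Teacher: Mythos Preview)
Your proposal is correct and follows essentially the same route as the paper: factor $\pi$ as $Z \to \mathrm{Emb}^c(T,\doublehat{M})/K \to E$, show each factor is a fibre bundle via locally retractile ambient diffeomorphism actions (the paper cites Theorems A--B of Palais and an extension of Proposition~4.15 of Part~I for exactly this), then identify the two fibres as $\mathrm{ker}(z)/K$ (via contractibility of tubular-neighbourhood spaces) and a path-connected space coming from embeddings into $\rinftymt$. The paper is terser about the second fibre, saying only that path-connectedness ``is not hard'' from infinite-dimensionality, whereas you make explicit that hypothesis~(c) (yielding surjectivity of $\mathrm{Diff}_K(N') \to K$) is what allows one to normalise the restriction to $\tau_j(U)$ and reduce to a contractible relative embedding space modulo a group --- this is a genuine clarification, since without that surjectivity the $K$-orbit in the base could break into several components.
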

\begin{proof}
By a mild extension of Theorem B of \cite{Palais1960Localtrivialityof} (see also \cite{Cerf1961Topologiedecertains}) allowing manifolds with boundary, the action of $\mathrm{Diff}_\partial(\hat{M})$ on $\mathrm{Emb}(P,\hat{M})$ is locally retractile. Similarly, a mild extension of Proposition 4.15 of \cite{Palmer2018HomologicalstabilitymoduliI} implies that the action of $\mathrm{Diff}_\partial(\rinftymt)$ on $\mathrm{Emb}(U,\rinftymt)/K$ is locally retractile. Theorem A of \cite{Palais1960Localtrivialityof} then implies that the right-hand map $\mathrm{Emb}^c(T,\doublehat{M})/K \to E$ above and the vertical map of \eqref{eq:pullback-diagram} are fibre bundles. The left-hand map $Z \to \mathrm{Emb}^c(T,\doublehat{M})/K$ above is a pullback of the vertical map of \eqref{eq:pullback-diagram}, so it is also a fibre bundle. A composition of two fibre bundles is not necessarily a fibre bundle, but it is at least a Serre fibration.

It is not hard to show that the fibres of the vertical map of \eqref{eq:pullback-diagram} are path-connected, using the fact that we are considering embeddings into infinite-dimensional Euclidean space. The fibres of $Z \to \mathrm{Emb}^c(T,\doublehat{M})/K$ are therefore also path-connected.

Fix an embedding $e \in \mathrm{Emb}(P,\hat{M})$ and denote the fibre of the map $\mathrm{Emb}^c(T,\doublehat{M}) \to \mathrm{Emb}(P,\hat{M})$ over $e$ by $\mathrm{Emb}(T,\doublehat{M})_e$. Note that we do not yet take the quotient by $K$. This is almost the space $\mathrm{Tub}(e)$ of tubular neighbourhoods of $e$. More accurately, there is a fibration $\mathrm{Emb}(T,\doublehat{M})_e \to \mathrm{Aut}(T)$ to the topological group of bundle automorphisms of $T$ (as a disc bundle with structure group $O(m-p)$), and the fibre over the identity is $\mathrm{Tub}(e)$. This may be summarised as follows:
\begin{equation}\label{eq:two-fibration-sequences}
\centering
\begin{split}
\begin{tikzpicture}
[x=1mm,y=1mm]
\node (tl) at (5,12) {$\mathrm{Tub}(e)$};
\node (tm) at (30,12) {$\mathrm{Emb}(T,\doublehat{M})_e$};
\node (tr) at (60,12) {$\mathrm{Emb}^c(T,\doublehat{M})$};
\node (bm) at (30,0) {$\mathrm{Aut}(T)$};
\node (br) at (60,0) {$\mathrm{Emb}(P,\hat{M})$};
\node at (69,0) [anchor=west] {$\ni e$};
\draw[->] (tl) to (tm);
\draw[->] (tm) to (tr);
\draw[->] (tm) to (bm);
\draw[->] (tr) to (br);
\end{tikzpicture}
\end{split}
\end{equation}
Now, the space $\mathrm{Tub}(e)$ of tubular neighbourhoods of $e$ is contractible \cite[Proposition 31]{Godin2007Higherstringtopology}, so the fibres of $\mathrm{Emb}^c(T,\doublehat{M}) \to \mathrm{Emb}(P,\hat{M})$ are homotopy equivalent to $\mathrm{Aut}(T)$. Note that this group is exactly the kernel of the map
\[
z \colon \mathrm{Diff}_{\mathrm{fib}}(T) \longrightarrow \mathrm{Diff}(P)
\]
defined at the beginning of \S\ref{s:symm-diff-groups}. To study the fibres of $\mathrm{Emb}^c(T,\doublehat{M})/K \to \mathrm{Emb}(P,\hat{M})$, we quotient three of the spaces in \eqref{eq:two-fibration-sequences} by the action of $K$:
\begin{equation}\label{eq:two-fibration-sequences-K}
\centering
\begin{split}
\begin{tikzpicture}
[x=1mm,y=1mm]
\node (tl) at (2,12) {$\mathrm{Tub}(e)$};
\node (tm) at (30,12) {$\mathrm{Emb}(T,\doublehat{M})_e/K$};
\node (tr) at (65,12) {$\mathrm{Emb}^c(T,\doublehat{M})/K$};
\node (bm) at (30,0) {$\mathrm{Aut}(T)/K$};
\node (br) at (65,0) {$\mathrm{Emb}(P,\hat{M})$};
\node at (74,0) [anchor=west] {$\ni e$};
\draw[->] (tl) to (tm);
\draw[->] (tm) to (tr);
\draw[->] (tm) to (bm);
\draw[->] (tr) to (br);
\end{tikzpicture}
\end{split}
\end{equation}
As $\mathrm{Tub}(e)$ is contractible, the fibres of $\mathrm{Emb}^c(T,\doublehat{M})/K \to \mathrm{Emb}(P,\hat{M})$ are homotopy equivalent to
\[
\mathrm{Aut}(T) / K = \mathrm{ker}(z) / K.
\]
But we assumed just before the statement of Theorem \ref{tmain} on page \pageref{tmain} that this coset space is path-connected. Thus both of the fibrations $Z \to \mathrm{Emb}^c(T,\doublehat{M})/K \to \mathrm{Emb}(P,\hat{M})$ have path-connected fibres, so the composite fibration $\pi$ also has path-connected fibres.
\end{proof}

Note that, by construction, the map $\pi$ is $G$-equivariant. Recall from \S\ref{s:mod-labelled} (see Input \ref{input-data}) that we also need to choose a basepoint $\bar{\imath}_0 \in Z$ and a continuous homomorphism $\bar{\gamma} \colon \bR \to \mathrm{Homeo}^G(Z)$ in order to define the moduli space of labelled submanifolds $C_{nP}(M,Z;G)$.

Choose an embedding $\upsilon \colon N' \hookrightarrow \bR^\infty \times (-0.5,0.5) \subset \rinftymt$ so that the following diagram commutes:
\begin{center}
\begin{tikzpicture}
[x=1mm,y=1mm]
\node (tl) at (0,12) {$U$};
\node (tm) at (20,12) {$N'$};
\node (bl) at (0,0) {$U$};
\node (bm) at (20,0) {$\doublehat{M}$};
\node (r) at (40,6) {$\rinftymt$};
\draw[<->] (tl) to node[left,font=\small]{$\sigma$} (bl);
\draw[->] (tl) to node[above,font=\small]{$\tau_j|_U$} (tm);
\draw[->] (bl) to node[below,font=\small]{$\tau_i|_U$} (bm);
\draw[->] (tm) to node[above,font=\small]{$\upsilon$} (r);
\draw[->] (bm) to node[below,font=\small]{$\doublehat{e}_0$} (r);
\end{tikzpicture}
\end{center}

\begin{rmk}
The choice of $\upsilon$ can be made independently of $e_0$, since we have prescribed how $e_0$ acts on $\mathrm{col}(\partial M \times [0,1])$, and therefore how $\doublehat{e}_0$ acts on $\coldoublehat(\partial M \times [-2,1])$, and the image $\tau_i(U)$ is contained in $\coldoublehat(\partial M \times (-0.5,0.5))$.
\end{rmk}

Then $([\tau_i],[\upsilon]) \in Z$ and $\pi([\tau_i],[\upsilon]) = i = i_0$. So we may set $\bar{\imath}_0 = ([\tau_i],[\upsilon])$.

We may extend the ``shift'' map of Definition \ref{d:shift} by the identity to a diffeomorphism $\mathrm{sh}_r \colon \doublehat{M} \to \doublehat{M}$ for each $r \in \bR$. We write $\mathrm{id} \times \bar{\theta}(r)$ for the self-diffeomorphism of $\rinftymt = \bR^\infty \times [-2,\infty)$ that is the identity on $\bR^\infty$ and acts by (the restriction of) $\bar{\theta}(r) \colon \bR \to \bR$ on $[-2,\infty)$. With this notation, we define a map $\bar{\gamma} \colon \bR \to \mathrm{Map}(Z,Z)$ by
\[
\bar{\gamma}(r) \colon ([\alpha],[\beta]) \longmapsto ([\mathrm{sh}_r \circ \alpha],[(\mathrm{id} \times \bar{\theta}(r)) \circ \beta]).
\]
One may easily check that $\bar{\gamma}$ is a well-defined, continuous map and that its image lies in $\mathrm{Homeo}^G(Z) \leq \mathrm{Map}(Z,Z)$. It is also a group homomorphism (since $\bar{\theta}$ is) and each $\bar{\gamma}(r)$ covers the self-homeomorphism $\gamma(r) = \mathrm{sh}_r \circ -$ of $E$. This completes the construction of the input data needed (see Input \ref{input-data}) in order to apply Definition \ref{d:labelled-disconn-submfld}.

\begin{defn}
We may now apply Definition \ref{d:labelled-disconn-submfld} to the data $(\pi \colon Z \to E , \bar{\gamma} , \bar{\imath}_0)$ constructed above to obtain spaces $C_{nP}(M,Z;G)$ and $C_{\hat{n}P}(M,Z;G)$, as well as a stabilisation map
\[
s_n \colon C_{nP}(M,Z;G) \longrightarrow C_{\hat{n}P}(M,Z;G).
\]
These may be thought of as \emph{moduli spaces of disconnected submanifolds labelled by parametric-connected-sum-data}.
\end{defn}

In the rest of this subsection, we will show that the moduli space $C_{nP}(M,Z;G)$, for this particular fibration $\pi \colon Z \to E$, is homotopy equivalent to the fibres of the bundle \eqref{eq:main-fibre-bundle}: see Proposition \ref{p:identify-fibres}. First we establish a lemma that we will need in the proof of this proposition.

\begin{lem}\label{l:path-connected-orbit}
The space $\mathrm{Emb}(nT,M)/(H \wr \Sigma_n)$ has a left-action of the group $\mathrm{Diff}_\partial(M)$ of diffeomorphisms of $M$ that act by the identity on a neighbourhood of its boundary. The embedding $\Phi$ from \textup{\S\ref{s:param-connected-sum}} gives us a basepoint $[\Phi]$ for $\mathrm{Emb}(nT,M)/(H \wr \Sigma_n)$. The orbit of this basepoint under the action of $\mathrm{Diff}_\partial(M)$ is path-connected.
\end{lem}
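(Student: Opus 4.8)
The plan is to reduce path-connectedness of the orbit to two elementary observations: that $\Phi(nT)$ is contained in a collar of $\partial M$, and that every element of $\mathrm{Diff}_\partial(M)$ is, by definition, the identity on \emph{some} neighbourhood of $\partial M$. First one checks that the left action is well-defined: $\mathrm{Diff}_\partial(M)$ acts on $\mathrm{Emb}(nT,M)$ by postcomposition whereas $H\wr\Sigma_n$ acts on it by precomposition (through self-diffeomorphisms of $nT=\{1,\dots,n\}\times T$), and these two actions commute, so the $\mathrm{Diff}_\partial(M)$-action descends to $\mathrm{Emb}(nT,M)/(H\wr\Sigma_n)$ and the image $[\Phi]$ of $\Phi$ under the quotient map is a well-defined point of it.

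To prove that the orbit is path-connected it will be enough to join each point $g\cdot[\Phi]=[g\circ\Phi]$ of the orbit, where $g$ ranges over $\mathrm{Diff}_\partial(M)$, to $[\Phi]$ by a path lying inside the orbit. I would fix $g$, choose $\epsilon>0$ so small that $g$ is the identity on $\mathrm{col}(\partial M\times[0,\epsilon])$, and use that $\tau_i(T)\subseteq\partial M\times(-\tfrac12,\tfrac12)$ (so $\Phi(nT)$ lies in a collar of $\partial M$) to produce a path $(h_t)_{t\in[0,1]}$ in $\mathrm{Diff}_\partial(M)$ with $h_0=\mathrm{id}_M$, each $h_t$ supported in a collar, such that the ``compressed'' embedding $\Phi^\epsilon:=h_1\circ\Phi$ has image inside $\mathrm{col}(\partial M\times(0,\epsilon))$. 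Since $g$ is the identity on the image of $\Phi^\epsilon$, one has $g\circ\Phi^\epsilon=\Phi^\epsilon$, hence $[g\circ\Phi^\epsilon]=[\Phi^\epsilon]$. The required path is then the concatenation of $t\mapsto[(g\circ h_t)\circ\Phi]$ — which stays in the orbit because $g\circ h_t\in\mathrm{Diff}_\partial(M)$, and runs from $[g\circ\Phi]$ to $[g\circ\Phi^\epsilon]=[\Phi^\epsilon]$ — with $t\mapsto[h_{1-t}\circ\Phi]$ — which stays in the orbit because $h_t\in\mathrm{Diff}_\partial(M)$, and runs from $[\Phi^\epsilon]$ back to $[\Phi]$.

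The only substantive ingredient is the existence of the compressing isotopy $(h_t)$, and I expect this to be the only point needing any care: it is the standard fact that a collar of $\partial M$ can be pushed into an arbitrarily thin sub-collar by an ambient isotopy fixing a neighbourhood of $\partial M$ (explicitly, transport via $\mathrm{col}$ an isotopy of $[0,\infty)$ rel $0$ that squeezes the relevant interval into $[0,\epsilon]$, and extend by the identity away from the collar), so it should cause no real difficulty. Note finally that $\epsilon$, and hence $(h_t)$, is allowed to depend on $g$; this is harmless, since we only ever connect a point of the orbit to the fixed basepoint $[\Phi]$.
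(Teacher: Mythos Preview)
Your proposal is correct and takes essentially the same approach as the paper's (first) proof: both exploit that $\Phi(nT)$ lies in a collar of $\partial M$ and that any element of $\mathrm{Diff}_\partial(M)$ is the identity on some sub-collar, linking the two via a collar-compressing isotopy. The only difference is cosmetic---the paper phrases it as isotoping $\varphi$ to a diffeomorphism that is the identity on the image of $\Phi$ (giving a single path $t\mapsto\varphi_t\circ\Phi$), whereas you instead compress $\Phi$ into the region where $g$ is already the identity and concatenate two paths; these are dual formulations of the same idea.
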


\begin{rmk}\label{r:path-connected-orbit}
The map $[- \circ \Phi] \colon \mathrm{Diff}_\partial(M) \to \mathrm{Emb}(nT,M)/(H \wr \Sigma_n)$ is a fibre bundle, by Propositions 4.15 and 4.7 of \cite{Palmer2018HomologicalstabilitymoduliI}, so its image, the orbit of $[\Phi]$, must be a union of path-components. By Lemma \ref{l:path-connected-orbit} it is exactly the path-component of $\mathrm{Emb}(nT,M)/(H \wr \Sigma_n)$ containing $[\Phi]$.
\end{rmk}

\begin{proof}[Proof of Lemma \ref{l:path-connected-orbit}]
Let $\varphi \in \mathrm{Diff}_\partial(M)$; we will find a path of embeddings $nT \hookrightarrow M$ from $\varphi \circ \Phi$ to $\Phi$. The image of $\Phi$ is contained in a collar neighbourhood of $\partial M$, so we may choose a path $t \mapsto \varphi_t$ in $\mathrm{Diff}_\partial(M)$ so that $\varphi_0 = \varphi$ and $\varphi_1$ restricts to the identity on this collar neighbourhood, in particular on the image of $\Phi$. Then $t \mapsto \varphi_t \circ \Phi$ is the required path of embeddings.
\end{proof}

\begin{proof}[Second proof of Lemma \ref{l:path-connected-orbit}, in a special case]
If the embedding $P \hookrightarrow \partial M$ has image contained in a coordinate neighbourhood $\bR^{m-1} \subseteq \partial M$, then we may give an alternative proof. We start with a certain collection of copies of $T$ embedded into $M$, we are given a diffeomorphism $\varphi$ of $M$ that fixes $\partial M$ and we need to show that there is a path of embeddings taking the images of the copies of $T$ under $\varphi$ back to their original positions. The assumption on the embedding $P \hookrightarrow \partial M$ means that we may assume that the original copies of $T$ are enclosed in pairwise disjoint codimension-zero balls, which reduces the problem to the case where $T$ is a codimension-zero ball: we need to find an isotopy taking each ball back to its original position before we applied the diffeomorphism $\varphi$. We may do this for one ball at a time, so let us assume that there is only one ball. If we denote the inclusion of this ball into $M$ by $b \colon B \hookrightarrow M$, then our task is to find a path of embeddings from $\varphi \circ b$ to $b$. First, since $M$ is path-connected, we may move $\varphi \circ b$ to a new embedding $b'$ such that $b'(0) = b(0)$. We may then modify $b'$ by a rotation near $b'(0)$ to a new embedding $b''$ whose derivative at $0 \in B$ agrees with that of $b$. This is possible using just a rotation, and not a reflection, because either $M$ is orientable, in which case $\varphi$ is an orientation-preserving diffeomorphism (since it fixes $\partial M$, and $\partial M$ is non-empty), or $M$ is non-orientable, in which case we may if necessary push $b'$ around a non-orientable loop in $M$ in order to change the sign of the determinant of its derivative at $0$. Next, we may shrink $b''$ so that $b''(B) \subseteq b(B)$, so now we have a self-embedding $c = b^{-1} \circ b'' \colon B \hookrightarrow B$ and we need to find an isotopy from this to the identity. Note that $c(0) = 0$ and the derivative of $c$ at $0$ is the identity. We may therefore define an isotopy $c_t$ by $c_t(x) = tc(x/t)$ for $t \in (0,1]$ and $c_0 = \mathrm{id}$.
\end{proof}

\begin{prop}\label{p:identify-fibres}
The fibre of \eqref{eq:main-fibre-bundle} over $[e_0] \in \mathrm{Emb}_b(M,\rinftyz) / \mathrm{Diff}_\partial(M)$ is homotopy-equivalent to $C_{nP}(M,Z;G)$. More precisely, there is a canonical inclusion $\Psi^{-1}([e_0]) \hookrightarrow C_{nP}(M,Z;G)$, which is a homotopy equivalence. The corresponding statement for $\hat{\Psi}$ also holds\,\textup{:} Write $\hat{e}_0 = \doublehat{e}_0|_{\hat{M}}$. There is a canonical inclusion $\hat{\Psi}^{-1}([\hat{e}_0]) \hookrightarrow C_{\hat{n}P}(M,Z;G)$, which is a homotopy equivalence.
\end{prop}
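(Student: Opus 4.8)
The plan is to build the map of the proposition by ``extracting local data'' from a pair $(e,f)$, to check that it descends to the relevant quotient and lands in the correct path-component, and then to prove it is an equivalence by realising both $\Psi^{-1}([e_0])$ and $C_{nP}(M,Z;G)$ as total spaces of fibre bundles over the unlabelled moduli space $C_{nP}(M;G)$ and comparing fibres.

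\emph{Construction of the map.} A point of $\Psi^{-1}([e_0])$ is represented by a pair $(e,f)\in X_n$ with $e$ in the $\mathrm{Diff}_\partial(M)$-orbit of $e_0$; write $e=\doublehat{e}_0|_M\circ\psi$ for the unique $\psi\in\mathrm{Diff}_\partial(M)$ realising this. For each $i\in\{1,\dots,n\}$ the pair $(e,f)$ produces: (i) the embedded copy $\psi\circ\Phi(i,o(-))\colon P\hookrightarrow\mathring{M}\subseteq\hat{M}$; (ii) its tubular neighbourhood $\psi\circ\Phi(i,-)\colon T\hookrightarrow\mathring{M}\subseteq\doublehat{M}$, a point of $\mathrm{Emb}^c(T,\doublehat{M})$; and (iii) the restriction of $f$ to the $i$-th glued-on copy of $N'=N\smallsetminus\tau_j(T')$, a point of $\mathrm{Emb}(N',\rinftyz)\subseteq\mathrm{Emb}(N',\rinftymt)$. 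Condition (ii) in the definition of $X_n$ — that $e$ and $f$ agree on $M\smallsetminus\Phi(nT')$ — together with the gluing identification of Definition \ref{d:param-conn-sum} (the involution $\sigma$ and the map $\mathrm{id}\times\tau_j$) says precisely that the images of (ii) and (iii) in $\mathrm{Emb}(U,\rinftymt)/K$ agree, so the triple is a point of the pullback space $Z$ of \eqref{eq:pullback-diagram}. Remembering this data for all $i$, up to the residual action, gives a point of $Z_n/(G\wr\Sigma_n)$, and by Remark \ref{r:path-connected-orbit} together with Lemma \ref{l:path-connected-orbit} the underlying configuration lies in the path-component of the standard one, so the triple lands in $C_{nP}(M,Z;G)$. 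That the assignment descends to $\Gamma_n:=\Sigma_H\mathrm{Diff}(M\underset{nP}{\sharp}nN)$-orbits is read off from the pullback square \eqref{eq:pullback-symm-boundary}: the homomorphism $\Gamma_n\to\Sigma_H\mathrm{Diff}(M,nT)$ is surjective (a pullback of the surjection of assumption (c)), its image acts on the configurations through $H\wr\Sigma_n$, hence on the $P$-parametrisations through $G\wr\Sigma_n$ and on the tubular-neighbourhood data through $K\wr\Sigma_n$, while the $\mathrm{Diff}_H(N)\wr\Sigma_n$-factor acts on the copies of $N'$ through $\mathrm{Diff}_K(N')\wr\Sigma_n$; matching these against the diagonal $G\wr\Sigma_n$-action defining $Z_n/(G\wr\Sigma_n)$ shows the map is well defined. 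Denote it $\iota\colon\Psi^{-1}([e_0])\to C_{nP}(M,Z;G)$.

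\emph{$\iota$ is a homotopy equivalence.} Sending $[(e,f)]\mapsto[\{\psi\circ\Phi(i,o(-))\}_i]$ defines a map $\Psi^{-1}([e_0])\to C_{nP}(M;G)$, which is a fibre bundle by the local-retractility arguments of \cite[Props.\ 4.7, 4.8, 4.15]{Palmer2018HomologicalstabilitymoduliI} (as in Lemma \ref{l:pbundle} and Remark \ref{r:path-connected-orbit}); composing $\iota$ with the forgetful map $C_{nP}(M,Z;G)\to C_{nP}(M;G)$ of \eqref{eq:square-stabilisation} (a Serre fibration, since $\pi$ is one and the $G\wr\Sigma_n$-action is locally retractile) recovers this map, so $\iota$ is a map of fibrations over $C_{nP}(M;G)$. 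It therefore suffices to show $\iota$ induces a weak equivalence on each fibre. Over a configuration $c$, the target fibre is, by the analysis in the proof of Lemma \ref{l:path-connected}, assembled from the tubular-neighbourhood data — which, since the space $\mathrm{Tub}(c_i)$ of honest tubular neighbourhoods is contractible by \cite[Prop.\ 31]{Godin2007Higherstringtopology}, contributes $\bigl(\mathrm{Aut}(T)/K\bigr)^n=\bigl(\mathrm{ker}(z)/K\bigr)^n$ — fibred over the copies of $N'$ embedded in $\rinftymt$ modulo $\mathrm{Diff}_K(N')$; the source fibre is the space of pairs $(e,f)$ with underlying configuration $c$ modulo the residual stabiliser, where the choice of $e$ beyond the tubular-neighbourhood data is a space of extensions of a fixed embedding to all of $M$ and then into $\rinftyz$, hence contractible, and the copies of $N'$ are embedded in $\rinftyz$. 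Matching these — absorbing the contractible extension spaces, $\mathrm{Tub}$, and the homotopy equivalence $\mathrm{Emb}(-,\rinftyz)\hookrightarrow\mathrm{Emb}(-,\rinftymt)$, and using the pullback description of $Z$ to see that the factor $K$ in $\mathrm{Aut}(T)/K$ cancels correctly against the $\mathrm{Diff}_K(N')$-quotient — shows $\iota$ is a fibrewise weak equivalence, hence a weak equivalence; since all spaces in sight have the homotopy type of CW complexes (again by local retractility), it is a homotopy equivalence.

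\emph{The main obstacle, and the $\hat n$ case.} The genuine work is the bookkeeping: carefully matching the single diagonal $\Gamma_n$-quotient on the source side with the three interlocking quotients that assemble $Z_n/(G\wr\Sigma_n)$ (by $G\wr\Sigma_n$ on configurations, $K\wr\Sigma_n$ on tubular neighbourhoods, $\mathrm{Diff}_K(N')\wr\Sigma_n$ on the copies of $N'$, fibred together over the $U$-overlap data), keeping track of the nested ambient spaces $\rinftyz\subseteq\rinftym\subseteq\rinftymt$ and $M\subseteq\hat{M}\subseteq\doublehat{M}$, and verifying enough Palais-type local triviality (mild extensions of \cite[Props.\ 4.7, 4.8, 4.15]{Palmer2018HomologicalstabilitymoduliI} and of \cite{Palais1960Localtrivialityof}) to promote the fibrewise weak equivalence to an honest homotopy equivalence. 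The statement for $\hat\Psi$ is proved by the identical argument throughout, replacing $n$, $M(0)$, $e_0$, $\Phi$, $\rinftyz$ by $\hat{n}$, $M(-1)$, $\hat{e}_0$, $\hat\Phi$, $\rinftym$, using the constructions already in place from \S\ref{s:stab} and \S\ref{s:mod-labelled}.
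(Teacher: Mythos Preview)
Your construction of the map $\iota$ is correct and matches the paper's Steps~1--2: both describe $\Psi^{-1}([e_0])$ and $C_{nP}(M,Z;G)$ as subspaces of pullbacks over the same $U$-overlap data and observe that the former includes into the latter. The real difference is in how the homotopy equivalence is established.

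The paper does \emph{not} fibre over $C_{nP}(M;G)$ and compare fibres. Instead, it characterises the image of $\iota$ by three explicit disjointness conditions --- (a) the $\varphi_\alpha(T)$ are pairwise disjoint and lie in $M$ (not just $\doublehat{M}$ with zero-sections in $\mathring{M}$); (b) the $f_\alpha(N')$ are pairwise disjoint and lie in $\rinftyz$ (not just $\rinftymt$); (c) $\bigcup f_\alpha(N')$ avoids the closure of $e_0(M\smallsetminus\bigcup\varphi_\beta(\mathring{T}))$ --- and then builds an explicit deformation retraction of $C_{nP}(M,Z;G)$ onto this subspace. The retraction is constructed on the \emph{ordered} cover $F_{nP}(M,Z;G)$ (then pushed down via a map of fibre sequences over $B\Sigma_n$), because separating the different copies of $N'$ from one another requires pushing the $r$-th copy into the $(k{+}1{+}r)$-th coordinate of $\bR^\infty$, which needs the ordering.

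Your fibrewise strategy is a legitimate alternative, but the sentence ``matching these --- absorbing the contractible extension spaces, $\mathrm{Tub}$, and the homotopy equivalence $\mathrm{Emb}(-,\rinftyz)\hookrightarrow\mathrm{Emb}(-,\rinftymt)$'' is exactly where the content lies, and it is not yet a proof. Over a fixed configuration $c$, the source fibre still carries the disjointness constraints (b) and (c) and the containment constraint (a), none of which appear in the target fibre; showing these constraints are homotopically vacuous requires precisely the shrink-and-push-into-extra-coordinates argument the paper carries out, and in particular the passage to ordered configurations to separate the $N'$'s. You should also justify that $\Psi^{-1}([e_0])\to C_{nP}(M;G)$ is a fibration with the right image (the paper's argument that condition~(d) follows from~(a) is what gives surjectivity). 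In short: your outline is sound, but the fibre comparison is not lighter than the paper's deformation retraction --- it repackages the same work, and that work is currently missing from your sketch.
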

\begin{proof}
We will do this in three steps: (1) give an explicit description of $\Psi^{-1}([e_0])$ and note that it is path-connected, (2) give an explicit description of $C_{nP}(M,Z;G)$ and show that it contains a homeomorphic copy of $\Psi^{-1}([e_0])$, and (3) show that the inclusion is a homotopy equivalence.

\textbf{Step 0.}
Before this, however, we recall a basic fact that we will use in the next step. Let $X$ be a left $G$-space, and assume that the $G$-action on $X$ is \emph{locally retractile} (see for example Definition~4.5 of \cite{Palmer2018HomologicalstabilitymoduliI}). Then for any $x \in X$ there is a $G$-equivariant homeomorphism $G/\mathrm{stab}_G(x) \cong \mathrm{orbit}(x)$. To see this, first note that the action map $-\cdot x \colon G \to X$ induces a continuous bijection $G/\mathrm{stab}_G(x) \to \mathrm{orbit}(x) \subseteq X$, which is $G$-equivariant. Then Theorem A of \cite{Palais1960Localtrivialityof} implies that this map is a fibre bundle, in particular an open map, and so it is a homeomorphism.

\textbf{Step 1.}
Rewriting the definition of $X_n$ a little, we may describe it as the subspace of
\[
\mathrm{Pullback}\bigl(\,\mathrm{Emb}_b(M,\rinftyz) \longrightarrow \mathrm{Emb}(nU,\rinftyz) \longleftarrow \mathrm{Emb}(nN',\rinftyz)\,\bigr)
\]
of pairs of embeddings $(e,f)$ such that $f(nN')$ is disjoint from the closure\footnote{We need to take the closure here since $M$ was not assumed to be compact.} of $e(M \smallsetminus \Phi(n\mathring{T}))$. The first map in the pullback diagram is given by restriction along $\Phi$ and the second map is given by restriction along $\tau_j$ followed by the involution $\sigma$ of $U$. The quotient $X_n/\Sigma_H \mathrm{Diff}(M \underset{nP}{\sharp} nN)$ may therefore be described as the subspace of
\[
\mathrm{Pullback}\biggl(\,\frac{\mathrm{Emb}_b(M,\rinftyz)}{\Sigma_H \mathrm{Diff}(M,nT)} \longrightarrow \frac{\mathrm{Emb}(nU,\rinftyz)}{H \wr \Sigma_n} \longleftarrow \frac{\mathrm{Emb}(nN',\rinftyz)}{\mathrm{Diff}_H(N) \wr \Sigma_n}\,\biggr)
\]
of pairs $([e],[f])$ satisfying the same disjointness condition (\cf the pullback square \eqref{eq:pullback-symm-boundary} in \S\ref{s:symm-diff-groups}). The fibre $\Psi^{-1}([e_0])$ is the subspace where the image of $e$ agrees with the image of $e_0$, so it may be described as the subspace of
\[
\mathrm{Pullback}\biggl(\,\frac{\mathrm{Diff}_\partial(M)}{\Sigma_H \mathrm{Diff}(M,nT)} \longrightarrow \frac{\mathrm{Emb}(nU,\rinftyz)}{H \wr \Sigma_n} \longleftarrow \frac{\mathrm{Emb}(nN',\rinftyz)}{\mathrm{Diff}_H(N) \wr \Sigma_n}\,\biggr)
\]
of pairs $([\eta],[f])$ such that $f(nN')$ is disjoint from the closure of $e_0(M \smallsetminus \eta(\Phi(n\mathring{T})))$. A mild extension of Proposition 4.15 of \cite{Palmer2018HomologicalstabilitymoduliI} shows that $\mathrm{Emb}(nT,M)/(H \wr \Sigma_n)$ is $\mathrm{Diff}_\partial(M)$-locally retractile. The stabiliser of $[\Phi] \in \mathrm{Emb}(nT,M)/(H \wr \Sigma_n)$ is the subgroup $\Sigma_H \mathrm{Diff}(M,nT) \leq \mathrm{Diff}_\partial(M)$, so via the ``topological orbit-stabiliser theorem'' (see Step 0 above) we deduce that $\mathrm{Diff}_\partial(M) / \Sigma_H \mathrm{Diff}(M,nT)$ is homeomorphic to the orbit of $[\Phi]$ in $\mathrm{Emb}(nT,M)/(H \wr \Sigma_n)$. By Lemma \ref{l:path-connected-orbit} and Remark \ref{r:path-connected-orbit}, the orbit of $[\Phi]$ in $\mathrm{Emb}(nT,M)/(H \wr \Sigma_n)$ is the path-component of $[\Phi]$ in $\mathrm{Emb}(nT,M)/(H \wr \Sigma_n)$. Thus $\mathrm{Diff}_\partial(M) / \Sigma_H \mathrm{Diff}(M,nT)$ is homeomorphic to the path-component of $[\Phi]$ in $\mathrm{Emb}(nT,M)/(H \wr \Sigma_n)$. We may therefore describe $\Psi^{-1}([e_0])$ as the subspace of
\[
\mathrm{Pullback}\biggl(\,\frac{\mathrm{Emb}(nT,M)}{H \wr \Sigma_n} \longrightarrow \frac{\mathrm{Emb}(nU,\rinftyz)}{H \wr \Sigma_n} \longleftarrow \frac{\mathrm{Emb}(nN',\rinftyz)}{\mathrm{Diff}_H(N) \wr \Sigma_n}\,\biggr)
\]
of pairs $([\Phi'],[f])$ such that $f(nN')$ is disjoint from the closure of $e_0(M \smallsetminus \Phi'(n\mathring{T}))$ and there exists a path $[\Phi'] \rightsquigarrow [\Phi]$ in $\mathrm{Emb}(nT,M)/(H \wr \Sigma_n)$.

It is now not hard to show that the fibre $\Psi^{-1}([e_0])$ is path-connected. Choose a basepoint $([\Phi_0'],[f_0])$ for it and consider any other point $([\Phi'],[f])$. There is a path $[\Phi'] \rightsquigarrow [\Phi_0']$ in the left-hand space of the pullback diagram. The image of this path in the middle space may be lifted to a path $[f] \rightsquigarrow [f_1]$ in the right-hand space, since the right-hand map of the pullback diagram is a fibre bundle, and therefore a Serre fibration. Since we are considering embeddings into $\rinftyz$ we may easily ensure that this path of embeddings satisfies the disjointness condition, at each point in time during the path, with respect to the path $[\Phi'] \rightsquigarrow [\Phi_0']$, so we have a path $([\Phi'],[f]) \rightsquigarrow ([\Phi_0'],[f_1])$ in $\Psi^{-1}([e_0])$. Choose a path $f_1 \rightsquigarrow f_0$ of embeddings $nN' \hookrightarrow \rinftyz$ disjoint from the closure of $e_0(M \smallsetminus \Phi_0'(n\mathring{T}))$ and constant when restricted to $nU$. This gives us a path $([\Phi_0'],[f_1]) \rightsquigarrow ([\Phi_0'],[f_0])$ in $\Psi^{-1}([e_0])$. Thus we have shown that $\Psi^{-1}([e_0])$ is path-connected.

\textbf{Step 2.}
Recall from Definition \ref{d:labelled-disconn-submfld} that $C_{nP}(M,Z;G)$ is a certain path-component of $Z_n / (G \wr \Sigma_n)$. Unravelling this definition for the fibration $\pi \colon Z \to E$ of Lemma \ref{l:path-connected}, we may describe $Z_n$ as the subspace of
\[
\mathrm{Pullback}\biggl(\,\frac{\mathrm{Emb}(T,\doublehat{M})^n}{K^n} \longrightarrow \frac{\mathrm{Emb}(U,\rinftymt)^n}{K^n} \longleftarrow \frac{\mathrm{Emb}(N',\rinftymt)^n}{\mathrm{Diff}_K(N')^n}\,\biggr)
\]
of tuples of embeddings $(([\varphi_1],\ldots,[\varphi_n]),([f_1],\ldots,[f_n]))$ such that each $\varphi_\alpha(P)$ is contained in $\mathring{M}$ and the images $\varphi_1(P),\ldots,\varphi_n(P)$ are pairwise disjoint. The quotient $Z_n / (G \wr \Sigma_n)$ is therefore the subspace of
\[
\mathrm{Pullback}\biggl(\,\frac{\mathrm{Emb}(T,\doublehat{M})^n}{H \wr \Sigma_n} \longrightarrow \frac{\mathrm{Emb}(U,\rinftymt)^n}{H \wr \Sigma_n} \longleftarrow \frac{\mathrm{Emb}(N',\rinftymt)^n}{\mathrm{Diff}_H(N) \wr \Sigma_n}\,\biggr)
\]
of collections of embeddings $(\{[\varphi_1],\ldots,[\varphi_n]\},\{[f_1],\ldots,[f_n]\})$ satisfying the same two conditions. Comparing this with the final description of $\Psi^{-1}([e_0])$ in the previous step, we see that there is a canonical inclusion $\Psi^{-1}([e_0]) \hookrightarrow Z_n / (G \wr \Sigma_n)$. Moreover, $\Psi^{-1}([e_0])$ is path-connected and contains the basepoint configuration $\{[\bar{\imath}_1],\ldots,[\bar{\imath}_n]\}$, so there is in fact a canonical inclusion
\[
\Psi^{-1}([e_0]) \lhto C_{nP}(M,Z;G).
\]

\textbf{Step 3.}
A point in $C_{nP}(M,Z;G)$ lies in the subspace $\Psi^{-1}([e_0])$ if and only if
\begin{itemizeb}
\item[(a)] $\varphi_1(T),\ldots,\varphi_n(T)$ are pairwise disjoint and contained in $M$,
\item[(b)] $f_1(N'),\ldots,f_n(N')$ are pairwise disjoint and contained in $\rinftyz$,
\item[(c)] $\bigcup_\alpha f_\alpha(N')$ is disjoint from the closure of $e_0(M \smallsetminus \bigcup_\beta \varphi_\beta(\mathring{T}))$,
\item[(d)] there is a path in $\mathrm{Emb}(nT,M)/(H \wr \Sigma_n)$ from $\{[\varphi_1],\ldots,[\varphi_n]\}$ to $[\Phi]$.
\end{itemizeb}
In fact, property (d) is automatic once we have property (a). Since $C_{nP}(M,Z;G)$ is path-connected, there is a path in $\mathrm{Emb}(T,\doublehat{M})^n / (H \wr \Sigma_n)$ from $\{[\varphi_1],\ldots,[\varphi_n]\}$ to $[\Phi]$, and the $n$ embedded copies of $P \subset T$ in $\doublehat{M}$ are pairwise disjoint and contained in $\mathring{M}$ at each point in time during this path. We may therefore shrink the tubular neighbourhoods $T \supset P$ by an appropriate amount at each point during the path, to obtain a new path in $\mathrm{Emb}(nT,M)/(H \wr \Sigma_n)$ from $\{[\varphi_1],\ldots,[\varphi_n]\}$ to $[\Phi]$.

We therefore would like to define a deformation retraction that begins with a point in $C_{nP}(M,Z;G)$ and ends with a new point in $C_{nP}(M,Z;G)$ satisfying the disjointness properties (a), (b) and (c). In fact, we will not do this for $C_{nP}(M,Z;G)$, but instead for its ordered analogue $F_{nP}(M,Z;G)$, the covering space in which the $n$ embedded copies of $T$ in $\doublehat{M}$ are equipped with an ordering. If we write $\widetilde{\Psi}^{-1}([e_0]) \subset F_{nP}(M,Z;G)$ for the restriction of this covering space to $\Psi^{-1}([e_0]) \subset C_{nP}(M,Z;G)$, we have a map of fibre sequences:
\begin{center}
\begin{tikzpicture}
[x=1mm,y=1mm]
\node (tl) at (0,12) {$F_{nP}(M,Z;G)$};
\node (tm) at (30,12) {$C_{nP}(M,Z;G)$};
\node (tr) at (60,12) {$B\Sigma_n$};
\node (bl) at (0,0) {$\widetilde{\Psi}^{-1}([e_0])$};
\node (bm) at (30,0) {$\Psi^{-1}([e_0])$};
\node (br) at (60,0) {$B\Sigma_n$};
\draw[->] (tl) to (tm);
\draw[->] (tm) to (tr);
\draw[->] (bl) to (bm);
\draw[->] (bm) to (br);
\incl{(bl)}{(tl)}
\incl{(bm)}{(tm)}
\node at (60,6) {\rotatebox{90}{$=$}};
\end{tikzpicture}
\end{center}
If we can define a deformation retraction for the inclusion $\widetilde{\Psi}^{-1}([e_0]) \subset F_{nP}(M,Z;G)$, then the map of long exact sequences of homotopy groups will imply that the inclusion $\Psi^{-1}([e_0]) \subset C_{nP}(M,Z;G)$ is also a (weak) homotopy equivalence.

We now sketch a deformation retraction that begins with a point in $F_{nP}(M,Z;G)$ and ends with a new point in $F_{nP}(M,Z;G)$ satisfying properties (a), (b) and (c). Since the ``cores'' $\varphi_\alpha(P)$ are pairwise disjoint and contained in $\mathring{M}$, we may shrink the tubular neighbourhoods $T \supset P$ as above to ensure property (a); this may moreover be done in a canonical way, so that the deformation retraction is continuous. To ensure property (c), we may choose $k$ such that $e_0(M) \subseteq \bR^k \subset \rinftyz$ and modify the embedded copies of $N'$ in $\rinftymt$ so that their $(k+1)$-st coordinate is non-zero on $N \smallsetminus \tau_j(T) \subset N'$.

Finally, we need to ensure property (b). We may push $\rinftymt$ into $\rinftyz$ by increasing its last coordinate to ensure that the embedded copies of $N'$ are contained in $\rinftyz$. To ensure that they are pairwise disjoint, we modify them by straight-line homotopies so that, for each $r \in \{1,\ldots,n\}$, the $(k+1+r)$-th coordinate of the $r$-th copy of $N'$ is non-zero on $N \smallsetminus \tau_j(T) \subset N'$, and the $(k+1+r)$-th coordinate of every other copy of $N'$ is zero on $N \smallsetminus V \subset N'$, where $V$ is a very small open neighbourhood of $\tau_j(T)$ in $N$. (This is where we need to use the ordering.) This will force the different copies of $N'$ to be pairwise disjoint, except possibly on the subsets $V \smallsetminus \tau_j(T) \subset N'$. However, we may control these neighbourhoods to be very small, i.e.\ very close to the corresponding $\varphi_r(T)$, so, by shrinking these further if necessary, we may ensure that the different copies of $N'$ are pairwise disjoint everywhere.
\end{proof}

\subsection{A map of fibre bundles}

We now define a continuous map $X_n \to X_{\hat{n}}$, in order to obtain a map of bundles from $\Psi$ to $\hat{\Psi}$.

\begin{defn}\label{d:stn}
Define
\[
\mathrm{st}_n \colon X_n \longrightarrow X_{\hat{n}}
\]
to send a pair of embeddings $(e,f)$ to $(\hat{e},\hat{f})$, where:
\begin{itemizeb}
\item[(i)] $\hat{e} = e$ on $M \subset \hat{M}$ and $\hat{e}(\colhat(x,t)) = (b(x),t)$ for $(x,t) \in \partial M \times [-1,0]$.
\item[(ii)] Recall that
\[
M \underset{\hat{n}P}{\sharp} \hat{n}N \;=\; (\hat{M} \smallsetminus \hat{\Phi}(\hat{n}T')) \,\underset{\hat{n}U}{\cup}\, \hat{n}N'
\]
(\cf Definition \ref{d:param-conn-sum} and Notation \ref{not:nprime}). We define $\hat{f} = \hat{e}$ on the subspace $\hat{M} \smallsetminus \hat{\Phi}(\hat{n}T')$, and also $\hat{f} = f$ on the subspace $nN' \subseteq M \underset{nP}{\sharp} nN$, so it remains to define $\hat{f}$ on $\hat{n}N' \smallsetminus nN' = \{0\} \times N'$. Here we define it by
\[
\{0\} \times N' = N' \xrightarrow{\;\;\upsilon\;\;} \bR^\infty \times (-0.5,0.5) \xrightarrow{\;(\mathrm{id},-0.5)\;} \bR^\infty \times (-1,0) \subset \rinftym.
\]
\end{itemizeb}
It is then an easy exercise to check that $(\hat{e},\hat{f})$ is an element of $X_{\hat{n}}$ and that $\mathrm{st}_n$ is continuous.
\end{defn}

\begin{rmk}
Recall that $X_n$ and $X_{\hat{n}}$ have actions of $\Sigma_H \mathrm{Diff}(M \underset{nP}{\sharp} nN)$ and $\Sigma_H \mathrm{Diff}(M \underset{\hat{n}P}{\sharp} \hat{n}N)$ respectively. Since we defined $\hat{e}$ and $\hat{f}$ above so that $\hat{e} = e$ on $M$ and $\hat{f} = f$ on $M \underset{nP}{\sharp} nN$, it follows that $\mathrm{st}_n$ is equivariant with respect to the top horizontal map (continuous homomorphism) of the diagram \eqref{eq:symdiff-square}, which we now denote by
\[
\sigma_n \colon \Sigma_H \mathrm{Diff}(M \underset{nP}{\sharp} nN) \longrightarrow \Sigma_H \mathrm{Diff}(M \underset{\hat{n}P}{\sharp} \hat{n}N).
\]
It therefore follows from Lemma \ref{l:pbundle} that the induced map
\[
\overline{\mathrm{st}}_n \colon X_n / \Sigma_H \mathrm{Diff}(M \underset{nP}{\sharp} nN) \longrightarrow X_{\hat{n}} / \Sigma_H \mathrm{Diff}(M \underset{\hat{n}P}{\sharp} \hat{n}N)
\]
is a model for $B\sigma_n$, the map induced on classifying spaces by $\sigma_n$. It also fits into a map of bundles
\begin{equation}\label{eq:map-of-bundles}
\centering
\begin{split}
\begin{tikzpicture}
[x=1mm,y=1mm]
\node (tl) at (-30,15) {$X_n / \Sigma_H \mathrm{Diff}(M \underset{nP}{\sharp} nN)$};
\node (tr) at (30,15) {$X_{\hat{n}} / \Sigma_H \mathrm{Diff}(M \underset{\hat{n}P}{\sharp} \hat{n}N)$};
\node (bl) at (-30,0) {$\mathrm{Emb}_b (M,\rinftyz)/\mathrm{Diff}_\partial(M)$};
\node (br) at (30,0) {$\mathrm{Emb}_b (M,\rinftym)/\mathrm{Diff}_\partial(M),$};
\draw[->] ($ (tl.east) + (0,1.5) $) to node[above,font=\small]{$\overline{\mathrm{st}}_n$} ($ (tr.west) + (0,1.5) $);
\draw[->] ($ (tl.south) + (0,2) $) to node[left,font=\small]{$\Psi$} (bl);
\draw[->] ($ (tr.south) + (0,2) $) to node[right,font=\small]{$\hat{\Psi}$} (br);
\draw[->] (bl) to (br);
\end{tikzpicture}
\end{split}
\end{equation}
where the bottom horizontal map is defined by $[e] \mapsto [\hat{e}]$, where $\hat{e}$ is defined as in (i) in Definition \ref{d:stn} above.
\end{rmk}

\begin{lem}\label{l:split-injectivity}
The map $\overline{\mathrm{st}}_n$ induces split-injections on homology in all degrees.
\end{lem}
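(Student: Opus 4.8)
The plan is to deduce this from the split-injectivity half of Theorem \ref{tlabelled} by a comparison of the Serre spectral sequences of the two fibre bundles $\Psi$ and $\hat{\Psi}$ of Lemma \ref{l:bundlepsi}, using the map of bundles \eqref{eq:map-of-bundles}.

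The first step is to observe that the bottom horizontal map of \eqref{eq:map-of-bundles} is a weak homotopy equivalence. Both $\mathrm{Emb}_b(M,\rinftyz)$ and $\mathrm{Emb}_b(\hat{M},\rinftym)$ are weakly contractible --- they are spaces of embeddings of a manifold with boundary into a half-infinite-dimensional Euclidean space with prescribed germ along the boundary, and contractibility follows by the methods of \cite{Palmer2018HomologicalstabilitymoduliI} --- so, by the same local-retractility arguments used in the proofs of Lemmas \ref{l:pbundle} and \ref{l:bundlepsi}, both bases of \eqref{eq:map-of-bundles} are models of $B\mathrm{Diff}_\partial(M)$ and the map $[e]\mapsto[\hat{e}]$ between them covers the identity of $B\mathrm{Diff}_\partial(M)$ up to homotopy, hence is a weak equivalence. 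Thus \eqref{eq:map-of-bundles} may be regarded, after composing with a homotopy inverse of the bottom map, as a map of fibrations over the single base $B=B\mathrm{Diff}_\partial(M)$.

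The second step is to identify the map on fibres. Over the basepoints $[e_0]$ and $[\hat{e}_0]=[\doublehat{e}_0|_{\hat{M}}]$ --- and $[\hat{e}_0]$ is the image of $[e_0]$ under the bottom map of \eqref{eq:map-of-bundles} --- Proposition \ref{p:identify-fibres} gives homotopy equivalences $\Psi^{-1}([e_0])\simeq C_{nP}(M,Z;G)$ and $\hat{\Psi}^{-1}([\hat{e}_0])\simeq C_{\hat{n}P}(M,Z;G)$. Comparing the formula for $\mathrm{st}_n$ in Definition \ref{d:stn} --- which adjoins to a pair $(e,f)$ the $0$-th copy of $N'$, embedded via $\upsilon$ and translated into $\bR^\infty\times(-1,0)$ --- with the description of $C_{nP}(M,Z;G)$ and of the basepoint $\bar{\imath}_0=([\tau_i],[\upsilon])$ in Step 2 of the proof of Proposition \ref{p:identify-fibres}, one sees that under these identifications $\overline{\mathrm{st}}_n$ restricts on fibres to the stabilisation map $s_n\colon C_{nP}(M,Z;G)\to C_{\hat{n}P}(M,Z;G)$.

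The final step is the spectral sequence comparison. By Theorem \ref{tlabelled}, $s_n$ is split-injective on homology in all degrees, and I would argue that the splitting can be chosen compatibly with the $\mathrm{Diff}_\partial(M)$-actions on the two moduli spaces, so that it induces on the $E^2$-pages $H_p(B;\underline{H_q(C_{nP}(M,Z;G))})\to H_p(B;\underline{H_q(C_{\hat{n}P}(M,Z;G))})$ of the two Serre spectral sequences a split-injection compatible with the differentials; this then propagates to the $E^\infty$-pages and to the homology of the total spaces. I expect the main obstacle to be precisely this last point: a bare abelian-group splitting of the fibre homologies does not survive the comparison, so one must upgrade the split-injectivity of Theorem \ref{tlabelled} to one with $\mathrm{Diff}_\partial(M)$-parameters --- equivalently, produce a retraction of $s_n$ that is natural (for instance, equivariant at the chain or space level) with respect to the action of $\mathrm{Diff}_\partial(M)$ --- by inspecting that the retraction built in \cite{Palmer2018HomologicalstabilitymoduliI,Palmer2018Twistedhomologicalstability} is sufficiently functorial in $M$.
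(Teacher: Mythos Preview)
Your approach is different from the paper's and contains the gap you yourself flag in the final paragraph. The paper does \emph{not} go through the fibre identification or Theorem \ref{tlabelled} at all for this lemma: it applies Dold's Lemma~2 directly to the total spaces, by constructing ``forgetting'' maps
\[
X_n/D_n \longrightarrow \mathrm{Sp}^{\binom{n}{k}}(X_k/D_k)
\]
(where $D_n = \Sigma_H\mathrm{Diff}(M\underset{nP}{\sharp}nN)$) that send $[e,f]$ to the formal sum, over size-$k$ subsets $S\subseteq\{1,\ldots,n\}$, of the pairs obtained by restricting to the copies indexed by $S$. This sidesteps entirely the question of whether a retraction on fibre homology is $\pi_1(B\mathrm{Diff}_\partial(M))$-equivariant or survives the spectral sequence.

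Your proposed fix is not quite enough as stated. Even if the retraction from Theorem \ref{tlabelled} were $\mathrm{Diff}_\partial(M)$-equivariant on fibre homology, this would only give a splitting of the $E^2$-pages as bigraded groups; there is no reason the complementary summand should be closed under the differentials, so split-injectivity need not persist to $E^\infty$ or to the abutment. What you would actually need is a \emph{space-level} (or chain-level) retraction over the base --- i.e.\ transfer-type maps defined on the total spaces themselves, compatible with $\Psi$ and $\hat\Psi$. But once you write down such maps you have essentially reproduced the paper's direct construction, and the detour through Proposition \ref{p:identify-fibres} and the Serre spectral sequence becomes unnecessary. The moral is that for split-injectivity it is both easier and more robust to run the Dold argument at the level where you want the conclusion, rather than on the fibres.
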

\begin{proof}
In order to apply Lemma 2 of \cite{Dold1962DecompositiontheoremsSn} to deduce split-injectivity, it suffices to define maps
\[
X_n/D_n \longrightarrow \mathrm{Sp}^{\binom{n}{k}} (X_k / D_k)
\]
satisfying a certain equation up to homotopy, where we are using the temporary abbreviation $D_n = \Sigma_H \mathrm{Diff}(M \underset{nP}{\sharp} nN)$. We briefly sketch how to construct such maps. They may be defined by
\[
[e,f] \,\longmapsto\, \sum_{S \subseteq \{1,\ldots,n\} \, , \, \lvert S \rvert = k} [e_S',f_S'],
\]
where $f_S'$ is the composition
\begin{equation}\label{eq:fsprime}
M \underset{kP}{\sharp} kN \;\cong\; M \underset{S \times P}{\sharp} (S \times N) \lhto M \underset{nP}{\sharp} nN \xrightarrow{\; f \;} \rinftyz
\end{equation}
and $e_S' = e \circ \eta_S$, where $\eta_S \in \mathrm{Diff}_\partial(M)$ corresponds to the left-hand diffeomorphism of \eqref{eq:fsprime} in the sense that these two diffeomorphisms agree on $M \smallsetminus \Phi(kT')$.
\end{proof}

\subsection{Stability for symmetric diffeomorphism groups}

\begin{lem}
The map of base spaces in \eqref{eq:map-of-bundles} is a homotopy equivalence.
\end{lem}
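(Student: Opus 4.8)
The plan is to identify the two base spaces of \eqref{eq:map-of-bundles} as models for $B\mathrm{Diff}_\partial(M)$ and $B\mathrm{Diff}_\partial(\hat M)$, and the bottom map as $B\iota$, where $\iota$ extends a diffeomorphism by the identity over the external collar $\colhat(\partial M\times[-1,0])$, and then to prove that $\iota$ is a homotopy equivalence by a collar-absorption argument. In more detail: the embedding spaces $\mathrm{Emb}_b(M,\rinftyz)$ and $\mathrm{Emb}_b(\hat M,\rinftym)$ are weakly contractible, and the quotient maps onto the two base spaces of \eqref{eq:map-of-bundles} are principal bundles, for $\mathrm{Diff}_\partial(M)$ and $\mathrm{Diff}_\partial(\hat M)$ respectively, by a mild extension of Proposition~4.15 of \cite{Palmer2018HomologicalstabilitymoduliI} together with Proposition~4.8 of that paper --- exactly as in Lemmas~\ref{l:pbundle} and~\ref{l:bundlepsi}; hence these base spaces are models for $B\mathrm{Diff}_\partial(M)$ and $B\mathrm{Diff}_\partial(\hat M)$. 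The map $e\mapsto\hat e$ from $\mathrm{Emb}_b(M,\rinftyz)$ to $\mathrm{Emb}_b(\hat M,\rinftym)$ (with $\hat e$ as in Definition~\ref{d:stn}(i)) is equivariant for the continuous homomorphism
\[
\iota\colon\mathrm{Diff}_\partial(M)\longrightarrow\mathrm{Diff}_\partial(\hat M)
\]
sending $\varphi$ to the diffeomorphism equal to $\varphi$ on $M$ and to the identity on $\colhat(\partial M\times[-1,0])$ --- that is, $\widehat{e\circ\varphi}=\hat e\circ\iota(\varphi)$ --- so the bottom map of \eqref{eq:map-of-bundles} is a model for $B\iota$, and it suffices to prove that $\iota$ is a weak homotopy equivalence. (Equivalently, one applies the five lemma to the long exact homotopy sequences of the two principal bundles.)

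To prove that $\iota$ is a homotopy equivalence, I would fix a diffeomorphism $\kappa\colon M\xrightarrow{\,\cong\,}\hat M$ which is the identity away from $\mathrm{col}(\partial M\times[0,2])$ and stretches the collar of $M$ across the external collar of $\hat M$, namely $\kappa(\mathrm{col}(x,t))=\colhat(x,h(t))$ for a fixed increasing diffeomorphism $h$ of $[0,\infty]$ onto $[-1,\infty]$ with $h(0)=-1$ and $h(t)=t$ for $t\geq2$. Conjugation by $\kappa$ is an isomorphism of topological groups $c_\kappa\colon\mathrm{Diff}_\partial(M)\xrightarrow{\,\cong\,}\mathrm{Diff}_\partial(\hat M)$, so it is enough to construct a homotopy from $\iota$ to $c_\kappa$. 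I would choose a path $s\mapsto h_s$ ($s\in[0,1]$) of increasing diffeomorphisms of $[0,\infty]$ onto their images, with $h_1=h$, $h_0=\mathrm{id}$ and $h_s(t)=t$ for $t\geq2$; this induces a path $\kappa_s\colon M\hookrightarrow\hat M$ of codimension-zero embeddings from the standard inclusion $\kappa_0$ to $\kappa_1=\kappa$. For each $s$ and each $\varphi\in\mathrm{Diff}_\partial(M)$, the diffeomorphism $\kappa_s\varphi\kappa_s^{-1}$ of the codimension-zero submanifold $\kappa_s(M)\subseteq\hat M$ is the identity near the frontier $\kappa_s(\partial M)$ of $\kappa_s(M)$ --- because $\varphi$ is the identity near $\partial M$ --- and hence extends by the identity to an element $\Psi_s(\varphi)\in\mathrm{Diff}_\partial(\hat M)$. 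This defines a homotopy $\Psi\colon[0,1]\times\mathrm{Diff}_\partial(M)\to\mathrm{Diff}_\partial(\hat M)$ with $\Psi_0=\iota$ and $\Psi_1=c_\kappa$; since $c_\kappa$ is a homeomorphism, $\iota$ --- and therefore $B\iota$ --- is a homotopy equivalence, which is what we wanted.

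The one place where genuine care is needed is the construction of the homotopy $\Psi$: one must check that $\kappa_s\varphi\kappa_s^{-1}$ really does glue smoothly to the identity on $\hat M\smallsetminus\mathring{\kappa_s(M)}$, and that $\Psi_s(\varphi)$ depends continuously on the pair $(s,\varphi)$ in the Whitney $C^\infty$ topologies. Both are straightforward consequences of the facts that $\varphi$ is the identity on a neighbourhood of $\partial M$ and that $\kappa_s$ varies smoothly in $s$, but this verification is where the actual content of the lemma lies; everything else is bookkeeping with the local-retractility and principal-bundle results of \S4 of \cite{Palmer2018HomologicalstabilitymoduliI} and with Palais's local-triviality theorem \cite{Palais1960Localtrivialityof} already used above.
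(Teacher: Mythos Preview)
Your proof is correct but takes a genuinely different route from the paper's. The paper works directly on the quotient spaces: it chooses a path $\gamma_t$ of compactly-supported self-embeddings of $[-1,\infty)$ with $\gamma_0(-1)=0$, $\gamma_t(0)\geq 0$ and $\gamma_1=\mathrm{id}$, and applies this isotopy to the last coordinate of $\rinftym$; at $t=0$ this pushes an embedding of $\hat M$ into $\rinftym$ to (after restriction) an embedding of $M$ into $\rinftyz$, giving an explicit homotopy inverse, with the finer details deferred to Lemma~5.26 of \cite{Palmer2018HomologicalstabilitymoduliI}. You instead recognise the two base spaces as models for $B\mathrm{Diff}_\partial(M)$ and $B\mathrm{Diff}_\partial(\hat M)$, identify the map as $B\iota$ where $\iota$ extends by the identity over the external collar, and prove that $\iota$ is a homotopy equivalence by the standard collar-absorption argument (conjugating by an isotopy $\kappa_s$ from the inclusion $M\hookrightarrow\hat M$ to a diffeomorphism $\kappa\colon M\cong\hat M$). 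The paper's approach is shorter and stays entirely at the level of the moduli spaces, never needing the contractibility of the embedding spaces or any statement about the diffeomorphism groups themselves; your approach is more conceptual, isolating the underlying fact that attaching an external collar does not change $\mathrm{Diff}_\partial$ up to homotopy, and your homotopy $\Psi_s$ is even through homomorphisms. Either argument is entirely adequate here.
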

\begin{proof}
In order to define a homotopy inverse, we need a path of compactly-supported embeddings $\gamma_t \colon [-1,\infty) \hookrightarrow [-1,\infty)$ such that $\gamma_0(-1) = 0$, $\gamma_t(0) \geq 0$ for all $t$ and $\gamma_1 = \mathrm{id}$. Then applying this isotopy to the last coordinate of $\rinftym$ determines a homotopy inverse for the bottom horizontal map in \eqref{eq:map-of-bundles}. The finer details of the construction are similar to those of Lemma 5.26 of \cite{Palmer2018HomologicalstabilitymoduliI}.
\end{proof}

Now fix a point $[e_0]$ in the base space of $\Psi$ as in Construction \ref{construction:Z}. There is a commutative square\footnote{In fact this square does not commute on the nose, but if we replace the top horizontal map $s_n$ with a homotopic map $s_n^\prime$, then it does commute on the nose. Recall that $s_n$ adjoins the element $[\bar{\imath}_0] = [[\tau_i],[\upsilon]] \in Z/G$ to an unordered tuple of elements of $Z/G$. The map $s_n^\prime$ instead adjoins the element $[[\tau_i - 0.5],[\upsilon - 0.5]]$, where the ${-}\,0.5$ denotes a shift along the collar neighbourhood of $\doublehat{M}$ (for $\tau_i$) and in the last coordinate of $\rinftymt$ (for $\upsilon$).}
\begin{equation}\label{eq:identifying-fibres}
\centering
\begin{split}
\begin{tikzpicture}
[x=1mm,y=1mm]
\node (tl) at (0,12) {$C_{nP}(M,Z;G)$};
\node (tr) at (40,12) {$C_{\hat{n}P}(M,Z;G)$};
\node (bl) at (0,0) {$\Psi^{-1}([e_0])$};
\node (br) at (40,0) {$\hat{\Psi}^{-1}([\hat{e}_0])$};
\incl{(bl)}{(tl)}
\incl{(br)}{(tr)}
\draw[->] (tl) to node[above,font=\small]{$s_n$} (tr);
\draw[->] (bl) to node[below,font=\small]{$\overline{\mathrm{st}}_n|_{[e_0]}$} (br);
\end{tikzpicture}
\end{split}
\end{equation}
whose vertical maps are homotopy equivalences by Proposition \ref{p:identify-fibres}. The map of bundles \eqref{eq:map-of-bundles} induces a map of Serre spectral sequences, which is an isomorphism on $E^2$ pages up to vertical degree $\tfrac{n}{2}-1$ (or $\tfrac{n}{2}$ if we take field coefficients), by Theorem \ref{tlabelled} and Lemma \ref{l:path-connected}. The Zeeman comparison theorem \cite{Zeeman1957proofofcomparison} (\cf also \cite[Theorem 1.2]{Ivanov1993homologystabilityTeichmuller} or \cite[Remarque 2.10]{CollinetDjamentGriffin2013Stabilitehomologiquepour}) then implies that $\overline{\mathrm{st}}_n = B\sigma_n$ induces isomorphisms on homology up to degree $\tfrac{n}{2}-1$ (or $\tfrac{n}{2}$ with field coefficients). Together with Lemma \ref{l:split-injectivity}, this proves homological stability for the top horizontal map of \eqref{eq:symdiff-square}. By Remark \ref{r:canonical-heq}, a special case of this implies homological stability also for the bottom horizontal map of \eqref{eq:symdiff-square}.\footnote{We need to check that the third assumption stated just before Theorem \ref{tmain} is satisfied for this special case (i.e.\ for this choice of $N$), but this is clear (\cf the last sentence of Remark \ref{r:canonical-heq}).} This completes the proof of Theorem \ref{tmain}, assuming Theorem \ref{tlabelled}.

\section{Diffeomorphism groups of manifolds with conical singularities}\label{s:conical-singularities}

Before proving Theorem \ref{tlabelled}, we first discuss manifolds with (discrete) conical singularities and their diffeomorphism groups. These are a special case of manifolds with \emph{Baas-Sullivan singularities}, introduced in \cite{Sullivan1967Hauptvermutungmanifolds,Baas1973bordismtheorymanifolds}, for which the singular set is not necessarily discrete, but may itself be a smooth manifold of positive dimension (see \S\ref{ss:BS-singularities} for a brief overview). We then prove homological stability for diffeomorphism groups of manifolds with conical singularities, with respect to adding new singularities of a fixed type --- in fact, we will see that this is nothing more than a special case of homological stability for symmetric diffeomorphism groups, already contained in Theorem \ref{tmain}.

\subsection{Manifolds with conical singularities}\label{ss:conical}

Fix a smooth, closed $(m-1)$-dimensional manifold $L$. The \emph{open cone} on $L$ is
\[
\mathrm{cone}(L) = (L \times [0,\infty)) / (L \times \{0\})
\]
and we write $\star = [L \times \{0\}] \in \mathrm{cone}(L)$ for the point at the tip of the cone.

\begin{defn}\label{d:manifold-with-conical-sing}
An \emph{$m$-dimensional smooth manifold with conical $L$-singularities} consists of a space $M$ equipped with a discrete subset $\Sigma \subseteq M$ and the structure of a smooth manifold on $M \smallsetminus \Sigma$. In addition, the points $\sigma \in \Sigma$ are equipped with pairwise disjoint open neighbourhoods $\sigma \in U_\sigma \subseteq M$ and homeomorphisms $U_\sigma \cong \mathrm{cone}(L)$ sending $\sigma$ to $\star$ and restricting to diffeomorphisms $U_\sigma \smallsetminus \{\sigma\} \cong L \times (0,\infty)$.
\end{defn}

For example, when $m=1$, this amounts to a graph of fixed valency $k$ (i.e.\ every vertex has the same valency $k$). In this case $\Sigma$ is the set of vertices of the graph and $L$ is the $0$-manifold $\{1,\ldots,k\}$. Two other examples are as follows.

\begin{eg}
If $M$ is a smooth $m$-dimensional manifold, then we may take $\Sigma = \varnothing$. Alternatively, we may also take $\Sigma$ to be any discrete subset and $L = S^{m-1}$, although in this case we forget the smooth structure at the marked points $\Sigma$.
\end{eg}

\begin{eg}
If $\ell \subset \bR^3$ is a link with $k$ components, then $M = \bR^3 / \ell$ is a manifold with a single conical singularity ($\Sigma$ is the single point $[\ell] \in M$) of type $L = \bigsqcup_k (S^1 \times S^1)$.
\end{eg}

\subsection{Diffeomorphisms of singular manifolds}\label{ss:diff-singular}

Now we also fix a subgroup $H \leq \mathrm{Diff}(L)$. Note that each self-diffeomorphism $\psi$ of $L$ determines a self-homeomorphism $\mathrm{cone}(\psi)$ of $\mathrm{cone}(L)$ via $\mathrm{cone}(\psi)([x,t]) = [\psi(x),t]$.

\begin{defn}\label{d:diff-manifold-with-conical-sing}
Let $M$ be a manifold with conical $L$-singularities. A \emph{diffeomorphism} of $M$ is then a homeomorphism $\varphi \colon M \to M$ such that $\varphi(\Sigma) = \Sigma$ and the restriction $\varphi|_{M \smallsetminus \Sigma}$ is a diffeomorphism. Moreover, for each $\sigma \in \Sigma$ we require that $\varphi(U_\sigma) = U_{\varphi(\sigma)}$ and that the induced homeomorphism $\mathrm{cone}(L) \cong U_\sigma \to U_{\varphi(\sigma)} \cong \mathrm{cone}(L)$ is of the form $\mathrm{cone}(h)$ for some $h \in H$. These form a subgroup
\[
\mathrm{Diff}^L_H(M) \leq \mathrm{Homeo}(M).
\]
\end{defn}

\subsection{Relation to manifolds with Baas-Sullivan singularities}\label{ss:BS-singularities}

Manifolds with discrete conical singularities are a special case ($s=0$) of the more general notion of a \emph{manifold with Baas-Sullivan singularities}, in which the singular set is a smooth $s$-dimensional manifold. Fix a smooth, closed manifold $L$ of dimension $m-s-1$, which will be called the \emph{type}, or \emph{link}, of the singular set.

\begin{defn}
A \emph{manifold with Baas-Sullivan singularities} \cite{Sullivan1967Hauptvermutungmanifolds,Baas1973bordismtheorymanifolds} consists of a topological space $M$ equipped with the following data: a subset $\Sigma \subseteq M$, a structure of a smooth $s$-dimensional manifold (without boundary) on $\Sigma$ and a structure of a smooth $m$-dimensional manifold (possibly with boundary) on $M \smallsetminus \Sigma$, an open neighbourhood $U \supseteq \Sigma$ in $M$ and a homeomorphism
\[
\theta \colon (U,\Sigma) \longrightarrow (\Sigma \times \mathrm{cone}(L) , \Sigma \times \{\star\})
\]
whose restriction to $U \smallsetminus \Sigma \longrightarrow \Sigma \times L \times (0,\infty)$ is a diffeomorphism.
\end{defn}

\begin{defn}\label{d:diff-manifold-with-sing}
Given a manifold $\mathbf{M} = (M,\Sigma,U,\theta)$ with Baas-Sullivan singularities of type $L$, a \emph{diffeomorphism} of $\mathbf{M}$ is a homeomorphism $\varphi \colon M \to M$ fixing $\Sigma$ and $U$ setwise, such that the restrictions $\varphi|_\Sigma$ and $\varphi|_{M \smallsetminus \Sigma}$ are diffeomorphisms and $\varphi|_U = \theta^{-1} \circ (\varphi|_\Sigma \times \mathrm{cone}(\psi)) \circ \theta$ for some diffeomorphism $\psi \colon L \to L$. We may also fix a subgroup $H \leq \mathrm{Diff}(L)$ and require $\psi$ to be an element of $H$, in which case this is an \emph{$H$-diffeomorphism} of $\mathbf{M}$.
\end{defn}

\begin{rmk}
There is a another viewpoint on manifolds with Baas-Sullivan singularities, where, instead of a singular set $\Sigma \subseteq M$ equipped with a conical open neighbourhood, one instead has a smooth manifold $M$ with boundary, equipped with a collar neighbourhood and an embedding $\Sigma \times L \hookrightarrow \partial M$ whose image is a union of components of $\partial M$. A morphism $\varphi \colon M \to M'$ between such objects is then defined to be a smooth map, compatible with the collar neighbourhoods, sending $\Sigma \times L$ to $\Sigma' \times L$, such that the restriction $\varphi|_{\Sigma \times L}$ is a product of smooth maps $\Sigma \to \Sigma'$ and $L \to L$. A diffeomorphism of $M$ is an automorphism in this category. The definitions above are recovered by taking the quotient of $M$ by the equivalence relation corresponding to the partition
\[
\{ \{\sigma\} \times L \mid \sigma \in \Sigma \} \cup \{ \{x\} \mid x \in M \smallsetminus (\Sigma \times L) \}.
\]
In particular, the conical neighbourhood of $\Sigma$ is the image of the collar neighbourhood under this quotient. For more details, see \cite{Botvinnik1992Manifoldswithsingularities} or \cite{Perlmutter2015Cobordismcategorymanifolds}. (Note: our definition of a diffeomorphism of a manifold with Baas-Sullivan singularities is a mild generalisation of that of \cite[Definition 3.1]{Perlmutter2015Cobordismcategorymanifolds}, where the restriction $\varphi|_{\Sigma \times L}$ is required to be the product of a smooth map $\Sigma \to \Sigma'$ and the identity $L \to L$. In Definition \ref{d:diff-manifold-with-sing} this corresponds to $H$-diffeomorphisms of $\mathbf{M}$ with $H = \{\mathrm{id}\} \leq \mathrm{Diff}(L)$.)
\end{rmk}

\begin{rmk}
The definition of manifolds with Baas-Sullivan singularities may be generalised to allow a collection of smooth, closed manifolds $L_k$ of dimension $m-s-1$. In the case where $s=0$ (corresponding to Definition \ref{d:manifold-with-conical-sing}), this amounts to saying that each open neighbourhood $U_\sigma$ should be identified with $\mathrm{cone}(L_k)$ for some $k$. See \cite{Botvinnik1992Manifoldswithsingularities} or \cite{Perlmutter2013CobordismCategoryof} for more details.
\end{rmk}

\subsection{Relation to symmetric diffeomorphism groups}\label{ss:rel-symm-diff}

We now construct a specific sequence $\mathbf{M}_n$ of manifolds with $n$ conical singularities of a fixed type.

\begin{defn}\label{d:sequence-Mn}
As in \S\ref{s:param-connected-sum}, we fix an embedding $i \colon P \hookrightarrow \partial M \subseteq \hat{M}$, a metric on the normal bundle $\nu_i \to P$ of $i$ and a tubular neighbourhood $T = D(\nu_i) \hookrightarrow \partial M \times (-\tfrac12,\tfrac12) \subseteq \hat{M}$. As described in \S\ref{s:param-connected-sum}, this induces an embedding
\[
\Phi \colon nT = \{1,\ldots,n\} \times T \lhto \mathring{M}.
\]
Recall that $\mathring{T}$ denotes the interior of $T$ and $T' \subset T$ denotes the closed sub-disc-bundle of radius $\tfrac12$.

Given these inputs, we construct $\mathbf{M}_n$, a manifold with $n$ conical $\partial T$-singularities, as follows: starting with the manifold $M$, collapse the subset $\Phi(\{k\} \times T')$ to a point $\sigma_k$, for each $k \in \{1,2,\ldots,n\}$. Then the singularity set is $\Sigma = \{\sigma_1,\sigma_2,\ldots,\sigma_n\}$ and each point $\sigma_k$ is equipped with a conical neighbourhood $U_{\sigma_k} \cong \mathrm{cone}(\partial T)$ given by the image of $\Phi(\{k\} \times \mathring{T})$ under the collapse map $M \to \mathbf{M}_n$.
\end{defn}

\begin{rmk}
As a space, $\mathbf{M}_n$ is homeomorphic to the quotient of $M_\Phi$ obtained by collapsing each $\Phi(\{k\} \times \partial T) \subset \partial M_\Phi$ to a point --- see Remark \ref{r:mphi}.
\end{rmk}

For a smooth fibre bundle $\xi \colon E \to B$ with structure group $G$, write $\mathrm{Diff}_{\mathrm{fib}}(E) \leq \mathrm{Diff}(E)$ for the subgroup of diffeomorphisms $\varphi$ that respect the partition of $E$ into fibres of $\xi$ (it then follows that $\xi \circ \varphi = \bar{\varphi} \circ \xi$ for some diffeomorphism $\bar{\varphi}$ of $B$) and that act on fibres by elements of $G$. In particular we may apply this definition to the disc and sphere bundles (\cf \S\ref{s:symm-diff-groups})
\[
\xi \colon T \longrightarrow P \qquad\text{and}\qquad \xi_\partial = \xi|_{\partial T} \colon \partial T \longrightarrow P,
\]
whose structure groups are both $O(m-p)$. Since an element of $O(m-p)$ is determined by its action on $S^{m-p-1} = \partial D^{m-p}$, the restriction map $\mathrm{Diff}_{\mathrm{fib}}(T) \to \mathrm{Diff}_{\mathrm{fib}}(\partial T)$ is an isomorphism, and we will identify these groups via this isomorphism.

\begin{lem}\label{l:relate-sing-symm}
There is a natural isomorphism
\[
H_* (\mathrm{Diff}_H^{\partial T}(\mathbf{M}_n)) \;\cong\; H_* (\Sigma_H \mathrm{Diff}(M,nT))
\]
for any subgroup $H \leq \mathrm{Diff}_{\mathrm{fib}}(T) = \mathrm{Diff}_{\mathrm{fib}}(\partial T) \leq \mathrm{Diff}(\partial T)$.
\end{lem}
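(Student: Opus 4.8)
The plan is to identify $\mathrm{Diff}_H^{\partial T}(\mathbf{M}_n)$ and $\Sigma_H\mathrm{Diff}(M,nT)$ as naturally isomorphic topological groups, so that the asserted isomorphism (homology of a diffeomorphism group meaning, as always, homology of its classifying space) follows at once by applying $B(-)$. The isomorphism is mediated by the collapse map $c \colon M \to \mathbf{M}_n$ of Definition \ref{d:sequence-Mn}, which sends each $\Phi(\{k\} \times T')$ to the singular point $\sigma_k$.

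I would first line up the two sides concretely. By Remark \ref{r:mphi}, $\Sigma_H\mathrm{Diff}(M,nT)$ is the group of diffeomorphisms of $M_\Phi = M \smallsetminus \Phi(n\mathring{T})$ that are the identity near $\partial M$ and restrict on the sphere bundle $\Phi(n\partial T) \subset \partial M_\Phi$ to an element of $H \wr \Sigma_n$ (via the identification $\mathrm{Diff}_{\mathrm{fib}}(T) = \mathrm{Diff}_{\mathrm{fib}}(\partial T)$). On the other side, a diffeomorphism in $\mathrm{Diff}_H^{\partial T}(\mathbf{M}_n)$ preserves $\Sigma$ together with the conical neighbourhoods $U_{\sigma_k} = c(\Phi(\{k\} \times \mathring{T}))$, hence preserves the complement $\mathbf{M}_n \smallsetminus \bigcup_k U_{\sigma_k} = c(M_\Phi)$, on which it is a diffeomorphism which is the identity near $\partial M$. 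The bridge between the two is the elementary observation that the conical structure on $U_{\sigma_k}$ is exactly the $c$-image of the radial structure of $\Phi(\{k\} \times \mathring{T})$: a fibrewise diffeomorphism of $T$ acts on the shell $\mathring{T} \smallsetminus T'$ by its restriction to $\partial T$ in the sphere-bundle direction and trivially in the radial direction, which is precisely $\mathrm{cone}(h)$ for the associated $h \in \mathrm{Diff}_{\mathrm{fib}}(\partial T)$.

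Using this, I would build the comparison map. An element $\varphi \in \Sigma_H\mathrm{Diff}(M,nT)$ preserves $\Phi(nT')$ --- each element of $H$, being fibrewise-linear, preserves the sub-disc-bundle $T'$ --- while permuting the $n$ copies, so it descends along $c$ to a homeomorphism $\bar\varphi$ of $\mathbf{M}_n$ that is smooth away from $\Sigma$ and is the cone on the relevant element of $H$ near each $\sigma_k$; thus $\bar\varphi \in \mathrm{Diff}_H^{\partial T}(\mathbf{M}_n)$, and $\varphi \mapsto \bar\varphi$ is a continuous homomorphism $\rho$. Conversely, the restriction of $\psi \in \mathrm{Diff}_H^{\partial T}(\mathbf{M}_n)$ to $c(M_\Phi) = M_\Phi$ is a boundary-permuting diffeomorphism in the sense of Remark \ref{r:mphi} --- its restriction to $\Phi(\{k\}\times\partial T)$ being the element of $H$ recorded by the cone data at $\sigma_k$ --- hence an element of $\Sigma_H\mathrm{Diff}(M,nT)$; this is inverse to $\rho$. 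It then remains to check that $\rho$ and $\rho^{-1}$ are continuous for the Whitney $C^\infty$ topologies (transported through the quotient map $c$ and the restriction isomorphism $\mathrm{Diff}_{\mathrm{fib}}(T) \cong \mathrm{Diff}_{\mathrm{fib}}(\partial T)$), and that $\rho$ commutes with the stabilisation maps \eqref{eq:stab-hom-sing} and the bottom row of \eqref{eq:symdiff-square} --- both being ``adjoin one more copy of $T$, resp.\ one more singularity, near $\partial M$ and extend by the identity'' --- which gives naturality.

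The main obstacle is the bookkeeping at the interface hypersurface $\Phi(n\partial T)$: one must verify that the ``conical near each $\sigma_k$'' condition on $\mathbf{M}_n$ corresponds \emph{exactly} to the ``fibrewise on $\Phi(nT)$'' condition on $M$ --- smooth structures and collar conventions included --- so that $\rho$ is a homeomorphism and not merely a continuous bijection. Should that turn out to be delicate, it suffices for the homology statement to note that $\rho$ is at least a weak homotopy equivalence: a homotopy inverse is given by ``blowing up'' each $\sigma_k$ back to a copy of $T$ using its conical chart, a construction depending only on a contractible space of choices (of radial collars).
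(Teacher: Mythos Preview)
Your proposal is correct and takes essentially the same approach as the paper: the paper's proof is the single sentence ``This follows directly from the construction of $\mathbf{M}_n$, unravelling Definitions \ref{d:symm-diff-group} and \ref{d:diff-manifold-with-conical-sing}'', and what you have written is precisely that unravelling, spelled out via the collapse map $c$ and the identification $\mathrm{Diff}_{\mathrm{fib}}(T) \cong \mathrm{Diff}_{\mathrm{fib}}(\partial T)$. Your more detailed account --- in particular the observation that elements of $H$ preserve the sub-disc-bundle $T'$ so that $\varphi$ descends along $c$, and that the cone condition at $\sigma_k$ recovers exactly the fibrewise condition on $\Phi(\{k\}\times T)$ --- is a faithful expansion of what the paper leaves implicit.
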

\begin{proof}
This follows directly from the construction of $\mathbf{M}_n$, unravelling Definitions \ref{d:symm-diff-group} and \ref{d:diff-manifold-with-conical-sing}.
\end{proof}

Now Theorem \ref{tmain} and Lemma \ref{l:relate-sing-symm} immediately imply:

\begin{acoro}\label{coro-singularities}
Suppose that $p \leq \tfrac12(m-3)$ and the subgroup $H \leq \mathrm{Diff}_{\mathrm{fib}}(T) \leq \mathrm{Diff}(\partial T)$ has been chosen so that condition \textup{(b)} of \textup{\S\ref{s:hs}} holds. Then there are isomorphisms
\[
H_* (\mathrm{Diff}_H^{\partial T}(\mathbf{M}_n)) \;\cong\; H_* (\mathrm{Diff}_H^{\partial T}(\mathbf{M}_{n+1}))
\]
for $* \leq \tfrac{n}{2} - 1$, and for $* \leq \tfrac{n}{2}$ if we take field coefficients.
\end{acoro}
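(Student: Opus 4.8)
The plan is to deduce the Corollary from Theorem \ref{tmain} and Lemma \ref{l:relate-sing-symm}; the only point requiring care is that Theorem \ref{tmain} is stated under hypotheses \textup{(a)}, \textup{(b)} \emph{and} \textup{(c)}, whereas here we assume only \textup{(a)} and \textup{(b)}.

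First I would check that the isomorphism of Lemma \ref{l:relate-sing-symm} is compatible with stabilisation: the square whose vertical maps are the isomorphisms $H_*(\mathrm{Diff}_H^{\partial T}(\mathbf{M}_n)) \cong H_*(\Sigma_H\mathrm{Diff}(M,nT))$ and $H_*(\mathrm{Diff}_H^{\partial T}(\mathbf{M}_{n+1})) \cong H_*(\Sigma_H\mathrm{Diff}(\hat{M},\hat{n}T))$, and whose horizontal maps are \eqref{eq:stab-hom-sing} and the bottom morphism of \eqref{eq:symdiff-square}, should commute. This is immediate from the constructions: by Definition \ref{d:sequence-Mn} and the remark following it, $\mathbf{M}_n$ is the quotient of $M_\Phi$ obtained by collapsing each $\Phi(\{k\}\times\partial T)$ to a point, and $\mathbf{M}_{n+1}$ is the analogous quotient of $\hat{M}\smallsetminus\hat{\Phi}(\hat{n}\mathring{T})$; the inclusion $\mathbf{M}_n\hookrightarrow\mathbf{M}_{n+1}$ adjoining a new $\partial T$-singularity is covered by the inclusion $M_\Phi\hookrightarrow\hat{M}\smallsetminus\hat{\Phi}(\hat{n}\mathring{T})$ that extends the collar and adjoins the copy $\hat{\Phi}(\{0\}\times\partial T)$ of $\partial T$, and on both sides stabilisation extends a diffeomorphism by the identity. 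Unravelling Definitions \ref{d:symm-diff-group} and \ref{d:diff-manifold-with-conical-sing} exactly as in the proof of Lemma \ref{l:relate-sing-symm} then shows the square commutes, so it suffices to prove that the bottom horizontal map of \eqref{eq:symdiff-square} induces split injections in all degrees and isomorphisms for $*\le\tfrac{n}{2}-1$, improving to $*\le\tfrac{n}{2}$ with field coefficients.

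To dispense with hypothesis \textup{(c)}, I would invoke Remark \ref{r:canonical-heq}: taking $N=D_2(\nu_i)$ with $j\colon P\hookrightarrow\mathring{N}$ the zero-section and $T=D(\nu_j)\hookrightarrow\mathring{N}$ the inclusion, the boundary-permuting group $\Sigma_H\mathrm{Diff}(M,nT)$ is canonically homotopy equivalent, in the $2$-category of topological groups and compatibly with the stabilisation maps, to the symmetric diffeomorphism group $\Sigma_H\mathrm{Diff}(M\underset{nP}{\sharp}nD_2(\nu_i))$. For this choice of $N$ hypothesis \textup{(c)} holds automatically, since every $h\in H\le\mathrm{Diff}_{\mathrm{fib}}(D(\nu_i))$ extends linearly across $D_2(\nu_i)$ (\cf Remark \ref{r:canonical-heq} and the footnote near the end of the proof of Theorem \ref{tmain}). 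Thus Theorem \ref{tmain}, applied with this $N$ under hypotheses \textup{(a)} and \textup{(b)} alone, yields the required conclusion for the top horizontal map of \eqref{eq:symdiff-square}, hence --- via the canonical homotopy equivalence --- for its bottom horizontal map; combined with the commuting square of the first step and the natural isomorphism of Lemma \ref{l:relate-sing-symm}, this proves the Corollary, the range $*\le\tfrac{n}{2}$ with field coefficients coming from the remark following Theorem \ref{tmain}.

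The only genuine --- and rather mild --- obstacle is the bookkeeping in these two compatibility checks: that Lemma \ref{l:relate-sing-symm} respects stabilisation, and that the canonical homotopy equivalence of Remark \ref{r:canonical-heq} intertwines the two stabilisation maps, so that the conclusion of Theorem \ref{tmain} for $N=D_2(\nu_i)$ transports first to $\Sigma_H\mathrm{Diff}(M,nT)$ and then to $\mathrm{Diff}_H^{\partial T}(\mathbf{M}_n)$. Neither requires any idea beyond carefully matching up the definitions.
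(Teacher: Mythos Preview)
Your proposal is correct and follows the paper's approach: the paper simply states that Corollary~\ref{coro-singularities} follows immediately from Theorem~\ref{tmain} and Lemma~\ref{l:relate-sing-symm}, leaving implicit the two points you spell out. Your observation that hypothesis~(c) is not needed --- because the bottom map of \eqref{eq:symdiff-square} is handled via $N = D_2(\nu_i)$, for which (c) holds automatically --- is exactly the content of the footnote at the end of the proof of Theorem~\ref{tmain}, and the compatibility of Lemma~\ref{l:relate-sing-symm} with stabilisation is indeed routine.
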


\section{Twisted homological stability}\label{s:ths}

We will prove Theorem \ref{tlabelled} in \S\ref{s:proof-labelled} as a corollary of a twisted homological stability theorem for moduli spaces of disconnected submanifolds, which we prove in this section. This is a direct analogue of the main result of \cite{Palmer2018Twistedhomologicalstability}, which deals with configuration spaces of points, so we will not include all possible details in this section, since most of the constructions and proofs go through verbatim as in \cite{Palmer2018Twistedhomologicalstability}, with just minor changes of notation.

We return to the setup of \S\ref{s:mod-labelled}, but without any labels for now. So we have a smooth, connected $m$-dimensional manifold $M$ with non-empty boundary, a collar neighbourhood $\mathrm{col} \colon \partial M \times [0,\infty] \hookrightarrow M$ and an embedding $i \colon P \hookrightarrow \partial M$. Recall also that we have extended $M$ by lengthening its collar:
\[
\hat{M} = M \underset{\partial M \times [0,\infty]}{\cup} (\partial M \times [-1,\infty]),
\]
and in Definition \ref{d:shift} we constructed ``shifted'' embeddings $i_r \colon P \hookrightarrow \hat{M}$ for any $r \in \bR$, where $i_0 = i$.

Fix a subgroup $G \leq \mathrm{Diff}(P)$ and write $E = \mathrm{Emb}(P,\hat{M})$. Then in Definition \ref{d:labelled-disconn-submfld} we constructed moduli spaces $C_{nP}(M;G) \subseteq \mathrm{Sp}^n(E/G)$ and $C_{\hat{n}P}(M;G) \subseteq \mathrm{Sp}^{n+1}(E/G)$ and a stabilisation map
\begin{equation}\label{eq:stab-map-recalled}
s_n \colon C_{nP}(M;G) \longrightarrow C_{\hat{n}P}(M;G) \cong C_{(n+1)P}(M;G)
\end{equation}
between them.

\begin{rmk}\label{r:nhat-nplusone}
The identification on the right-hand side of \eqref{eq:stab-map-recalled} is given by identifying the interior $M(-1)$ of $\hat{M}$ (\cf Notation \ref{not:Mr}) with the interior $M(0)$ of $M$ via a diffeomorphism supported on the collar neighbourhood $\mathrm{col}(\partial M \times [-1,\infty]) \subset \hat{M}$. It will be more convenient in this section to think of the target of the stabilisation map as $C_{(n+1)P}(M;G)$. There is a natural basepoint $\{[i_1],\ldots,[i_n]\}$ of $C_{nP}(M;G)$, and $s_n$ is basepoint-preserving.
\end{rmk}

\begin{defn}\label{d:bpm}
The category $\cB_P(M)$ associated to these data has non-negative integers as objects, and a morphism $m \to n$ is given by a choice of $k \leq \mathrm{min}(m,n)$ and a path $\ell$ in $C_{kP}(M;G)$ with $\ell(0) \subseteq \{ [i_1],\ldots,[i_m] \}$ and $\ell(1) \subseteq \{ [i_1],\ldots,[i_n] \}$, up to endpoint-preserving homotopy. The identities are given by constant paths and composition is defined by concatenation of paths, as well as forgetting any ``strand'' of a path that does not match up with a ``strand'' of the other path, analogous to the composition of partially-defined functions. See (2) on page 151 of \cite{Palmer2018Twistedhomologicalstability} for an illustration. The construction in \S 3.1 of \cite{Palmer2018Twistedhomologicalstability} extends directly to this setting, and equips this category with an endofunctor
\[
S \colon \cB_P(M) \longrightarrow \cB_P(M),
\]
whose effect on objects is $n \mapsto n+1$, together with a natural transformation $\iota \colon \mathrm{id}_{\cB_P(M)} \to S$. Any functor $T \colon \cB_P(M) \to \mathsf{Ab}$ to the category of abelian groups (or any abelian category) may then be given a \emph{degree} by defining $\mathrm{deg}(0) = -1$ and recursively
\[
\mathrm{deg}(T) = \mathrm{deg}(\mathrm{coker}(T \to T \circ S)) + 1
\]
for $T \neq 0$. The automorphism group of $n$ in $\cB_P(M)$ is $\pi_1(C_{nP}(M;G))$, so there are well-defined twisted homology groups $H_*(C_{nP}(M;G);T(n))$. The homomorphism $T(\iota_n) \colon T(n) \to T(n+1)$ is equivariant with respect to the map induced on $\pi_1$ by \eqref{eq:stab-map-recalled}, so there are induced maps on twisted homology
\begin{equation}\label{eq:map-on-twisted-homology}
H_*(C_{nP}(M;G);T(n)) \longrightarrow H_*(C_{(n+1)P}(M;G);T(n+1)).
\end{equation}
\end{defn}

\begin{athm}\label{ttwisted}
If $p \leq \tfrac12(m-3)$, $G$ is an open subgroup of $\mathrm{Diff}(P)$ and $T \colon \cB_P(M) \to \mathsf{Ab}$ is a functor of degree $d < \infty$, then \eqref{eq:map-on-twisted-homology} is split-injective in all degrees, and an isomorphism for $* \leq \tfrac{n-d}{2}$.
\end{athm}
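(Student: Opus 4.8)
The plan is to transcribe the proof of the main theorem of \cite{Palmer2018Twistedhomologicalstability}, which treats labelled configuration spaces of points, into the present setting: one replaces configuration spaces by the moduli spaces $C_{nP}(M;G)$ of Definition~\ref{d:labelled-disconn-submfld} throughout, and almost every construction and estimate there is formal given three inputs --- untwisted homological stability for the relevant spaces, local-retractility of the relevant embedding spaces, and the categorical scaffolding of Definition~\ref{d:bpm}. The untwisted input is Theorem~A of \cite{Palmer2018HomologicalstabilitymoduliI} (available under \textup{(a)} and \textup{(b*)}), which here plays the role that the McDuff--Segal theorem plays in \cite{Palmer2018Twistedhomologicalstability}; the local-retractility statements are the ones already invoked in \S\ref{s:proof-symm-diff-groups}, taken from \cite{Palmer2018HomologicalstabilitymoduliI}.

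First I would assemble the structural package. The assignment $n \mapsto C_{nP}(M;G)$ together with the stabilisation maps \eqref{eq:stab-map-recalled} upgrades to a module, over the monoidal structure and the endofunctor $S$, on the category $\cB_P(M)$ whose automorphism groups are the motion groups $\pi_1(C_{nP}(M;G))$ --- all of this being formal from the constructions of \S\ref{s:mod-labelled}, as already noted in Definition~\ref{d:bpm}. The key computational lemma is a Shapiro-type identification: for a representable coefficient system $P_k = \bZ[\cB_P(M)(k,-)]$ (which has degree $k$), the twisted homology $H_*(C_{nP}(M;G); P_k(n))$ is the untwisted homology of the covering space of $C_{nP}(M;G)$ classified by the $\pi_1$-set $\cB_P(M)(k,n)$, namely the moduli space in which $k$ of the $n$ copies of $P$ are singled out and ordered. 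This covering space is again a moduli space of disconnected submanifolds in the sense of Part~I --- it fibres over the space of $k$ ordered, pairwise disjoint copies of $P$, with fibre the moduli space of $n-k$ further copies in the complement --- so Theorem~A of \cite{Palmer2018HomologicalstabilitymoduliI} applies and gives stability in degrees $* \leq \tfrac{n-k}{2}$, and split-injectivity in all degrees.

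Split-injectivity of \eqref{eq:map-on-twisted-homology} in all degrees would then follow, exactly as in \cite{Palmer2018Twistedhomologicalstability}, from the Dold-style transfer argument that already proves untwisted split-injectivity in \cite{Palmer2018HomologicalstabilitymoduliI} (compare Lemma~\ref{l:split-injectivity}): the maps $C_{nP}(M;G) \to \mathrm{Sp}^{\binom{n}{k}}(C_{kP}(M;G))$ are compatible with any polynomial coefficient system, so Dold's lemma applies. The stable range is proved by induction on $d = \mathrm{deg}(T)$. The case $T=0$ is vacuous and $d=0$ reduces to untwisted stability; for $d \geq 1$ one runs the spectral-sequence comparison of \cite{Palmer2018Twistedhomologicalstability}, resolving $T$ through the filtration by $\mathrm{coker}(T \to T\circ S)$ (degree $d-1$) and its iterates: the $E^1$-page involves the untwisted homology of the covering spaces above with coefficients in cross-effects of $T$ of degree $< d$, to which the inductive hypothesis and the previous paragraph apply, while the single degree-$d$ contribution is the map \eqref{eq:map-on-twisted-homology} itself and cancels in the comparison between $n$ and $n+1$. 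The numerical bookkeeping is that of \cite{Palmer2018Twistedhomologicalstability} and produces the loss of $\tfrac{d}{2}$ in the range; the split-injectivity established above removes any convergence issue.

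The main obstacle is not any single step --- each is a faithful rewriting of \cite{Palmer2018Twistedhomologicalstability} --- but the verification that the geometric scaffolding survives the generalisation: that the spaces of embeddings of disjoint copies of $P$ (or of its tubular neighbourhood $T$) and their quotients by motion-group actions are locally retractile and sufficiently locally trivial to support the Shapiro identification and the resolution spectral sequence, and that the motion groups $\pi_1(C_{nP}(M;G))$ --- which, unlike $\pi_1(C_n(M)) \cong \pi_1(M)\wr\Sigma_n$, admit no convenient description --- enter only through the abstract category $\cB_P(M)$. Since \cite{Palmer2018Twistedhomologicalstability} was written so as to isolate precisely the untwisted-stability input, these checks should be routine, and they constitute essentially the only new content of the section.
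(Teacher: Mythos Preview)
Your proposal is correct in outline and close to the paper's argument --- both are transcriptions of \cite{Palmer2018Twistedhomologicalstability} with the untwisted input replaced by Theorem~A of \cite{Palmer2018HomologicalstabilitymoduliI}, and you correctly isolate the three ingredients (untwisted stability, local retractility, the category $\cB_P(M)$) and the Dold transfer for split-injectivity.

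The organisation differs in one respect. You propose to handle the stable range by \emph{induction on $d$}, resolving $T$ through the cokernel filtration $T \to T\circ S$. The paper instead uses the \emph{direct decomposition} of $T(n)$ from Proposition~\ref{p:decomposition} (the analogue of Proposition~3.5 and Lemma~6.4 of \cite{Palmer2018Twistedhomologicalstability}): one writes $T(n) \cong \bigoplus_{k=0}^d \bigl( \bZ[\pi_1 C_{nP}] \otimes_{\bZ[\pi_1 C_{(n-k,k)P}]} T_n^k \bigr)$, applies Shapiro's lemma for covering spaces to get $H_*(C_{nP};T(n)) \cong \bigoplus_{k=0}^d H_*(C_{(n-k,k)P};T_n^k)$, and then for each $k$ runs a single Serre spectral sequence comparison for the fibre bundle $C_{(n-k,k)P}(M;G) \to C_{kP}(M;G)$ (Lemma~\ref{l:forget-fibre-bundle}), whose fibre is $C_{(n-k)P}(M';G)$ with $M' = M$ minus $k$ fixed copies of $P$, and whose fibrewise coefficient system $T_k^k$ is \emph{constant}. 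Theorem~A of \cite{Palmer2018HomologicalstabilitymoduliI} applied to $M'$ gives the isomorphism on $E^2$ in the range $t \leq \tfrac{n-k}{2}$, and Zeeman comparison finishes. No induction, no resolution spectral sequence, no cross-effects --- the decomposition does all the work in one step. Your inductive approach would also go through, but the paper's route is shorter and avoids having to track how the range degrades through an induction.
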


The proof of Theorem \ref{ttwisted} is a direct generalisation of \S 3 and \S 6 of \cite{Palmer2018Twistedhomologicalstability}, so we will just sketch the steps involved. Fix a functor $T \colon \cB_P(M) \to \mathsf{Ab}$.

\begin{defn}\label{d:endred}
For $S \subseteq \{1,\ldots,n\}$ let $f_S \colon n \to n$ be the morphism of $\cB_P(M)$ given by the constant path in $C_{(n - \lvert S \rvert)P}(M)$ at the point $\{ [i_s] \mid s \in \{1,\ldots,n\} \smallsetminus S \}$. Then $T(f_S)$ is an endomorphism of $T(n)$ and we may define a subgroup
\[
T_n^k = \mathrm{im}(T(f_{\{1,\ldots,n-k\}})) \; \cap \!\! \bigcap_{i=n-k+1}^n \mathrm{ker}(T(f_{\{i\}}))
\]
of $T(n)$ for any $0 \leq k \leq n$. We write
\begin{equation}\label{eq:covering-space}
C_{(n-k,k)P}(M;G) \longrightarrow C_{nP}(M;G)
\end{equation}
for the covering space in which $k$ copies of $P$ are coloured red and the remaining $n-k$ copies are coloured green. Equivalently, this is the connected covering space of $C_{nP}(M;G)$ corresponding to the subgroup
\[
\mathrm{end}_n^{-1}(\Sigma_{n-k} \times \Sigma_k) \subseteq \pi_1(C_{nP}(M;G)),
\]
where $\mathrm{end}_n \colon \pi_1(C_{nP}(M;G)) \to \Sigma_n$ is the homomorphism that remembers just the permutation of $\{[i_1],\ldots,[i_n]\}$ induced by a path. We also write
\begin{equation}\label{eq:forget-green}
\mathrm{red}_{(n-k,k)} \colon C_{(n-k,k)P}(M;G) \longrightarrow C_{kP}(M;G)
\end{equation}
for the map that forgets all green parts of a configuration.
\end{defn}

Proposition 3.5 and Lemma 6.4 of \cite{Palmer2018Twistedhomologicalstability} generalise directly to the following.

\begin{prop}\label{p:decomposition}
Each $T_n^k$ is invariant under the action of $\pi_1(C_{(n-k,k)P}(M;G))$ on $T(n)$, and therefore gives a twisted coefficient system for $C_{(n-k,k)P}(M;G)$. The pullback of the coefficient system $T_k^k$ along the map \eqref{eq:forget-green} is naturally isomorphic to $T_n^k$. There is a natural isomorphism of $\bZ[\pi_1(C_{nP}(M;G))]$-modules
\[
T(n) \;\cong\; \bigoplus_{k=0}^n \biggl( \bZ[\pi_1(C_{nP}(M;G))] \underset{\bZ[\pi_1(C_{(n-k,k)P}(M;G))]}{\otimes} T_n^k \biggr).
\]
\end{prop}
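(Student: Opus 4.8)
The plan is to realise the decomposition as the joint-eigenspace splitting of $T(n)$ under the commuting family of idempotents $\{T(f_S)\}_{S\subseteq\{1,\ldots,n\}}$, exactly as in the proof of Proposition~3.5 together with Lemma~6.4 of \cite{Palmer2018Twistedhomologicalstability}. First I would record that each $T(f_S)$ is idempotent and that these commute: factoring $f_S$ as $b_S\circ a_S$ through the object $n-\lvert S\rvert$, the composition rule of $\cB_P(M)$ (concatenate paths, then discard every strand not present on both sides) gives $a_S\circ b_S=\id$, hence $f_S\circ f_S=f_S$, and likewise $f_S\circ f_{S'}=f_{S\cup S'}=f_{S'}\circ f_S$; in particular $T(f_S)=\prod_{i\in S}p_i$ with $p_i:=T(f_{\{i\}})$. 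Then the standard splitting of a module under $n$ commuting idempotents gives
\[
T(n)=\bigoplus_{A\subseteq\{1,\ldots,n\}}M_A,\qquad M_A=\bigcap_{i\in A}\image(p_i)\;\cap\;\bigcap_{i\notin A}\kernel(p_i),
\]
with $p_i$ acting as the identity on $M_A$ if $i\in A$ and as $0$ if $i\notin A$. Since $\image\bigl(T(f_{\{1,\ldots,n-k\}})\bigr)=\bigoplus_{A\supseteq\{1,\ldots,n-k\}}M_A$ and $\bigcap_{i=n-k+1}^{n}\kernel(p_i)=\bigoplus_{A\subseteq\{1,\ldots,n-k\}}M_A$, the definition of $T_n^k$ collapses to a single summand, $T_n^k=M_{\{1,\ldots,n-k\}}$ — one of the $\binom{n}{k}$ summands indexed by $(n{-}k)$-element sets.

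Next I would bring in the group action. A loop $g$ conjugates $T(f_S)$ to $T(f_{\mathrm{end}_n(g)(S)})$, since relabelling the strands by $\mathrm{end}_n(g)$ turns ``forget $S$'' into ``forget $\mathrm{end}_n(g)(S)$'', so $g\cdot M_A=M_{\mathrm{end}_n(g)(A)}$. Surjectivity of $\mathrm{end}_n$ then makes $\{M_A:\lvert A\rvert=n-k\}$ a single orbit whose point-stabiliser of $A_0:=\{1,\ldots,n-k\}$ is $\mathrm{end}_n^{-1}(\Sigma_{n-k}\times\Sigma_k)=\pi_1(C_{(n-k,k)P}(M;G))$ by Definition~\ref{d:endred}. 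Hence $T_n^k=M_{A_0}$ is a submodule for the restricted action of this subgroup (the first assertion), and identifying a transitive permutation module with the module induced from a point-stabiliser gives
\[
\bigoplus_{\lvert A\rvert=n-k}M_A\;\cong\;\bZ[\pi_1(C_{nP}(M;G))]\underset{\bZ[\pi_1(C_{(n-k,k)P}(M;G))]}{\otimes}T_n^k.
\]
Summing over $k=0,\ldots,n$ and using the first display yields the asserted decomposition of $T(n)$; naturality in $T$ is automatic, since every map occurring is $T$ applied to a fixed morphism of $\cB_P(M)$.

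For the remaining (middle) assertion I would use the factorisation $f_{\{1,\ldots,n-k\}}=b\circ a$ with $a\colon n\to k$ forgetting the first $n-k$ strands (and relabelling the rest). From $a\circ b=\id_k$, $b\circ a=f_{\{1,\ldots,n-k\}}$, and the identities $T\bigl(f_{\{j\}}\colon k\to k\bigr)\circ T(a)=T(a)\circ p_{n-k+j}$, $p_{n-k+j}\circ T(b)=T(b)\circ T\bigl(f_{\{j\}}\colon k\to k\bigr)$ and $p_i\circ T(b)=T(b)$ for $i\le n-k$ — all immediate from the composition rule of $\cB_P(M)$ — one checks that $T(a)$ restricts to an isomorphism $T_n^k=M_{A_0}\xrightarrow{\ \sim\ }\bigcap_{j=1}^{k}\kernel\bigl(T(f_{\{j\}}\colon k\to k)\bigr)=T_k^k$ with inverse $T(b)$. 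Moreover $a\circ g=(\mathrm{red}_{(n-k,k)})_*(g)\circ a$ for $g\in\pi_1(C_{(n-k,k)P}(M;G))$ (a green/red-preserving relabelling followed by forgetting the green strands equals forgetting the green strands followed by the induced relabelling), so this isomorphism intertwines the $\pi_1(C_{(n-k,k)P}(M;G))$-action on $T_n^k$ with the pullback along $(\mathrm{red}_{(n-k,k)})_*$ of the action on $T(k)$ restricted to $T_k^k$ — which is precisely the claim that $T_n^k$ is the pullback of $T_k^k$ along \eqref{eq:forget-green}.

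I expect the main obstacle to be the equivariance bookkeeping of the last two paragraphs — establishing $g\,T(f_S)\,g^{-1}=T(f_{\mathrm{end}_n(g)(S)})$ and $a\circ g=(\mathrm{red}_{(n-k,k)})_*(g)\circ a$, and checking that the relevant restriction homomorphisms on fundamental groups really are the maps induced by the covering projections $C_{(n-k,k)P}(M;G)\to C_{nP}(M;G)$ and $C_{(n-k,k)P}(M;G)\to C_{kP}(M;G)$. As in \cite{Palmer2018Twistedhomologicalstability}, all of this is formal once composition in $\cB_P(M)$ and the definition of the covers are in hand, and the remainder of the argument is the purely algebraic splitting under a commuting family of idempotents; in practice the proof is a transcription of Proposition~3.5 and Lemma~6.4 there, with the obvious changes of notation.
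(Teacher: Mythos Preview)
Your proposal is correct and is precisely the approach the paper takes: the paper does not give an independent proof but simply states that Proposition~3.5 and Lemma~6.4 of \cite{Palmer2018Twistedhomologicalstability} generalise directly, and your argument is exactly that generalisation spelled out --- the commuting-idempotent splitting of $T(n)$ under the $T(f_S)$, the identification $T_n^k = M_{\{1,\ldots,n-k\}}$, the permutation of the summands $M_A$ by $\pi_1(C_{nP}(M;G))$ via $\mathrm{end}_n$, and the equivariance of $T(a)$ for the pullback claim. Your closing remark that ``the proof is a transcription of Proposition~3.5 and Lemma~6.4 there, with the obvious changes of notation'' matches the paper's own one-line justification verbatim in spirit.
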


Lemma 3.16 of \cite{Palmer2018Twistedhomologicalstability} also generalises directly:

\begin{lem}\label{l:degree-height}
We have $\mathrm{deg}(T) \leq d$ if and only if $T_n^k = 0$ for all $n \geq 0$ and all $k > d$.
\end{lem}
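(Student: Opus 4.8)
The plan is to follow the proof of Lemma~3.16 of \cite{Palmer2018Twistedhomologicalstability} essentially verbatim, the only real work being to re-run its combinatorial core inside $\cB_P(M)$. Recall from Definition~\ref{d:bpm} the endofunctor $S$ and the natural transformation $\iota \colon \id_{\cB_P(M)} \to S$, and write $\Delta T = \cokernel(T(\iota)\colon T \to T\circ S)$; naturality of $\iota$ makes $\Delta T$ a functor $\cB_P(M) \to \mathsf{Ab}$ again, and the recursive definition of degree says that $\degree(\Delta T) = \degree(T) - 1$ for $T \neq 0$, hence that for every $d \geq 0$ and every $T$ one has $\degree(T) \leq d \iff \degree(\Delta T) \leq d-1$. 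I would prove the lemma by induction on $d \geq -1$ on the statement $P(d)$: for every functor $T \colon \cB_P(M) \to \mathsf{Ab}$, $\degree(T) \leq d$ if and only if $T_n^k = 0$ for all $n$ and all $k > d$. The base case $d = -1$ is immediate from Proposition~\ref{p:decomposition}: the decomposition $T(n) \cong \bigoplus_{k=0}^n \mathrm{Ind}(T_n^k)$ shows that $T = 0$ if and only if $T_n^k = 0$ for all $n$ and all $k$ (an induced module vanishes iff the module does).

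The inductive step reduces entirely to a single natural isomorphism,
\[
(\Delta T)_m^k \;\cong\; T_{m+1}^{k+1} \qquad (0 \leq k \leq m).
\]
Granting this, for $d \geq 0$ and $T \neq 0$ one computes $\degree(T) \leq d \iff \degree(\Delta T) \leq d - 1 \iff (\Delta T)_m^k = 0$ for all $m$ and all $k > d-1$ (the inductive hypothesis $P(d-1)$ applied to $\Delta T$) $\iff T_{m+1}^{k+1} = 0$ for all $m$ and all $k \geq d \iff T_n^k = 0$ for all $n$ and all $k > d$, where in the last step one re-indexes $n = m+1$, $k \mapsto k+1$ and uses that $k > d \geq 0$ forces $k \geq 1$, hence $n \geq k \geq 1$. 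The case $T = 0$ is trivial, so this closes the induction.

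To establish the displayed isomorphism one mimics \S3 of \cite{Palmer2018Twistedhomologicalstability}. The inputs are: (i) the endomorphisms $T(f_S)$ of $T(n)$, for $S \subseteq \{1,\dots,n\}$, form a commuting family of idempotents with $T(f_S)\,T(f_{S'}) = T(f_{S \cup S'})$; (ii) in $\cB_P(M)$ one has $f_{\{m+1\}} = \iota_m \circ r_m$, where $r_m \colon m+1 \to m$ is the constant path forgetting the last strand, and $r_m \circ \iota_m = \id_m$, so $T(r_m)$ is a split epimorphism with section $T(\iota_m)$, whence $\image(T(f_{\{m+1\}})) = \image(T(\iota_m))$, the idempotent $T(f_{\{m+1\}})$ splits $T(m+1)$ as $\image(T(\iota_m)) \oplus \kernel(T(f_{\{m+1\}}))$, and the quotient map $T(m+1) \to (\Delta T)(m)$ restricts to an isomorphism $\kernel(T(f_{\{m+1\}})) \xrightarrow{\ \sim\ } (\Delta T)(m)$; and (iii) each remaining idempotent $T(f_S)$ with $S \subseteq \{1,\dots,m\}$ commutes with $T(f_{\{m+1\}})$ and descends along this quotient to the corresponding idempotent $(\Delta T)(f_S)$. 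Since $T_{m+1}^{k+1}$ is cut out of $T(m+1)$ by intersecting $\image(T(f_{\{1,\dots,m-k\}}))$ with the kernels of $T(f_{\{i\}})$ for $i = m-k+1, \dots, m+1$ — the last of which is $\kernel(T(f_{\{m+1\}}))$ — the isomorphism of (ii) carries $T_{m+1}^{k+1}$ exactly onto the subgroup of $(\Delta T)(m)$ cut out by the images and kernels of the descended idempotents, namely $(\Delta T)_m^k$; naturality in $T$ is clear from the construction.

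The main obstacle is this last paragraph: checking, with the correct conventions, the identities $f_S \circ f_{S'} = f_{S \cup S'}$, $f_{\{m+1\}} = \iota_m \circ r_m$ and $r_m \circ \iota_m = \id_m$ in $\cB_P(M)$ — whose morphisms compose like partially-defined bijections of strands — and verifying that the idempotents $T(f_S)$ interact with the stabilisation $T(\iota)$ exactly as the analogous idempotents do for configuration spaces in \cite{Palmer2018Twistedhomologicalstability}. None of this is deep, but it is the one place where the earlier arguments must be genuinely re-verified rather than merely cited.
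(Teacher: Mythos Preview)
Your proposal is correct and follows exactly the approach the paper intends: the paper does not give its own proof but simply asserts that Lemma~3.16 of \cite{Palmer2018Twistedhomologicalstability} generalises directly, and what you have written is precisely a reconstruction of that argument (induction on $d$, reducing to the natural isomorphism $(\Delta T)_m^k \cong T_{m+1}^{k+1}$ via the commuting idempotents $T(f_S)$). Your final caveat about checking the conventions for $S$ and $\iota$ is well-placed --- depending on whether the new strand is added at position $1$ or at position $m{+}1$ the relevant idempotent is $T(f_{\{1\}})$ or $T(f_{\{m+1\}})$, but this only conjugates the argument by a permutation and does not affect its validity.
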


The final lemma that we will need before proving Theorem \ref{ttwisted} is the following.

\begin{lem}\label{l:forget-fibre-bundle}
The map \eqref{eq:forget-green} is a fibre bundle.
\end{lem}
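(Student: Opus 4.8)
The plan is to exhibit \eqref{eq:forget-green} as a $\mathrm{Diff}_\partial(M)$-equivariant map over a locally retractile base and then invoke the Palais local-triviality criterion (Theorem~A of \cite{Palais1960Localtrivialityof}, equivalently Proposition~4.7 of \cite{Palmer2018HomologicalstabilitymoduliI}), exactly as in the fibre-bundle arguments of \S\ref{s:proof-symm-diff-groups} and of Part~I. Throughout, $\mathrm{Diff}_\partial(M)$ denotes the group of diffeomorphisms of $M$ that are the identity near $\partial M$.

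Both $C_{(n-k,k)P}(M;G)$ and $C_{kP}(M;G)$ are, by construction, path-components of quotients of the embedding spaces $\mathrm{Emb}(nP,M(0))$ and $\mathrm{Emb}(kP,M(0))$ by the subgroup $G^n \rtimes (\Sigma_{n-k} \times \Sigma_k) \leq G \wr \Sigma_n$ and by $G \wr \Sigma_k$, respectively, acting by precomposition. The group $\mathrm{Diff}_\partial(M)$ acts on these embedding spaces by post-composition, hence on the quotients, and $\mathrm{red}_{(n-k,k)}$ — which is induced by the map of ordered configuration spaces discarding the green copies of $P$ — is plainly equivariant. By Proposition~4.15 of \cite{Palmer2018HomologicalstabilitymoduliI} (or the mild extension of it used repeatedly above), the $\mathrm{Diff}_\partial(M)$-action on $\mathrm{Emb}(kP,M(0))/(G \wr \Sigma_k)$ is locally retractile; since $C_{kP}(M;G)$ is an open subspace, the restricted action on the base of \eqref{eq:forget-green} is locally retractile as well. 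Palais's theorem then gives that $\mathrm{red}_{(n-k,k)}$ is a fibre bundle: a local section $\rho \colon U \to \mathrm{Diff}_\partial(M)$ of the action over a neighbourhood $U$ of a point $c \in C_{kP}(M;G)$ provides a trivialisation $U \times \mathrm{red}_{(n-k,k)}^{-1}(c) \to \mathrm{red}_{(n-k,k)}^{-1}(U)$, $(u,x) \mapsto \rho(u)\cdot x$, with inverse $x \mapsto (\mathrm{red}_{(n-k,k)}(x),\, \rho(\mathrm{red}_{(n-k,k)}(x))^{-1}\cdot x)$; one sees in particular that the fibre over $c$ is the moduli space of $n-k$ further disjoint copies of $P$ in the complement of the submanifolds comprising $c$.

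The only point requiring care is the bookkeeping in the first step: one must check that post-composition by $\mathrm{Diff}_\partial(M)$ preserves the distinguished path-component of $\mathrm{Emb}(nP,M(0))/(G^n \rtimes (\Sigma_{n-k} \times \Sigma_k))$ that defines the \emph{coloured} covering space $C_{(n-k,k)P}(M;G)$ (and likewise for $C_{kP}(M;G)$), and that local retractility survives passage to these components. Both follow from the principle — already used in Lemma~\ref{l:path-connected-orbit}, Remark~\ref{r:path-connected-orbit} and the cited results of Part~I — that each such distinguished path-component is a single $\mathrm{Diff}_\partial(M)$-orbit, which is a consequence of local retractility together with the isotopy extension theorem; so I would simply invoke those analogues rather than repeat the argument. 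Everything else is formal.
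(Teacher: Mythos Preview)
Your proof is correct and follows essentially the same strategy as the paper: establish local retractility of the diffeomorphism-group action on the base and apply Palais's Theorem~A. The only genuine difference is in how the path-component issue is handled. The paper restricts to the identity component $\mathrm{Diff}_c(\mathring{M})_0$ (which automatically preserves path-components) and then cites Lemma~4.6(i),(iii) of \cite{Palmer2018HomologicalstabilitymoduliI} to pass local retractility first to this subgroup and then to the invariant subspace $C_{kP}(M;G)$. You instead keep the full group $\mathrm{Diff}_\partial(M)$ and argue that the distinguished path-component is a single orbit (via the analogue of Lemma~\ref{l:path-connected-orbit} and Remark~\ref{r:path-connected-orbit}), hence invariant. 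Both routes work; the paper's is marginally more economical since restricting to the identity component sidesteps the orbit argument entirely, whereas your route re-uses machinery already developed in \S\ref{s:proof-symm-diff-groups} and makes the transitivity of the action explicit.
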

\begin{proof}
By Proposition 4.15 of \cite{Palmer2018HomologicalstabilitymoduliI}, the action of $\mathrm{Diff}_c(\mathring{M})$ on $\mathrm{Emb}(kP,\mathring{M})/(G \wr \Sigma_n)$ is locally retractile. By Lemma 4.6(i) of \cite{Palmer2018HomologicalstabilitymoduliI}, the restriction of this action to the path-component of the identity $\mathrm{Diff}_c(\mathring{M})_0$ is also locally retractile. This restricted action obviously fixes setwise the path-component $C_{kP}(M;G) \subseteq \mathrm{Emb}(kP,\mathring{M})/(G \wr \Sigma_n)$, so by Lemma 4.6(iii) of \cite{Palmer2018HomologicalstabilitymoduliI}, the action of $\mathrm{Diff}_c(\mathring{M})_0$ on $C_{kP}(M;G)$ is locally retractile. Hence Theorem A of \cite{Palais1960Localtrivialityof} implies that \eqref{eq:forget-green} is a fibre bundle.
\end{proof}

\begin{proof}[Proof of Theorem \ref{ttwisted}]
The idea is exactly the same as on pages 172--173 of \cite{Palmer2018Twistedhomologicalstability}, so we just give a sketch of how to adapt it. By Proposition \ref{p:decomposition}, Lemma \ref{l:degree-height} and Shapiro's lemma for covering spaces (see Lemma 6.1 of \cite{Palmer2018Twistedhomologicalstability}) there are natural isomorphisms
\begin{equation}\label{eq:first-step}
H_*(C_{nP}(M;G);T(n)) \;\cong\; \bigoplus_{k=0}^d H_*(C_{(n-k,k)P}(M;G);T_n^k).
\end{equation}
It therefore suffices to show that the lift of the stabilisation map
\begin{equation}\label{eq:lifted-stab}
C_{(n-k,k)P}(M;G) \longrightarrow C_{(n+1-k,k)P}(M;G)
\end{equation}
that adds a new green copy of $P$ to the configuration induces isomorphisms on twisted homology with respect to the coefficient systems $T_n^k$ and $T_{n+1}^k$ up to degree $\tfrac{n-k}{2}$. This stabilisation map is a map of fibre bundles (by Lemma \ref{l:forget-fibre-bundle}) over the space $C_{kP}(M;G)$. Our default basepoint of this space is $\{[i_1],\ldots,[i_k]\}$, but for the next argument it will be more convenient to choose a different basepoint $\{[i'_1],\ldots,[i'_k]\}$, where each embedding $i'_{\alpha} \colon P \hookrightarrow M$ has image disjoint from the image of the collar neighbourhood. We may then define
\[
M' = M \smallsetminus \bigcup_{\alpha = 1}^k i'_{\alpha}(P)
\]
and take the same collar neighbourhood for $M'$ as for $M$. The restriction of the map \eqref{eq:lifted-stab} to the fibres over $\{[i'_1],\ldots,[i'_k]\}$ is the stabilisation map
\[
C_{(n-k)P}(M';G) \longrightarrow C_{(n+1-k)P}(M';G).
\]
There is a subtlety in this statement: it is not hard to see that there are topological embeddings
\begin{center}
\begin{tikzpicture}
[x=1mm,y=1mm]
\node (tl) at (0,12) {$\mathrm{red}_{(n-k,k)}^{-1}(\{ [i'_1],\ldots,[i'_k] \})$};
\node (tr) at (60,12) {$\mathrm{red}_{(n+1-k,k)}^{-1}(\{ [i'_1],\ldots,[i'_k] \})$};
\node (bl) at (0,0) {$C_{(n-k)P}(M';G)$};
\node (br) at (60,0) {$C_{(n+1-k)P}(M';G)$};
\incl{(bl)}{(tl)}
\incl{(br)}{(tr)}
\draw[->] (tl) to (tr);
\draw[->] (bl) to (br);
\end{tikzpicture}
\end{center}
making the square commute, defined by adjoining $\{[i'_1],\ldots,[i'_k]\}$ to a configuration. It remains to see that they are surjective: this follows from Proposition 5.10 of \cite{Palmer2018HomologicalstabilitymoduliI}. Using this identification and the first part of Proposition \ref{p:decomposition}, we therefore have a map of twisted Serre spectral sequences (\cf Proposition 5.7 of \cite{Palmer2018Twistedhomologicalstability}), which is as follows on the $E^2$ pages:
\[
H_s(C_{kP}(M;G);H_t(C_{(n-k)P}(M';G);T_k^k)) \longrightarrow H_s(C_{kP}(M;G);H_t(C_{(n+1-k)P}(M';G);T_k^k)),
\]
where $T_k^k$ is a \emph{constant} coefficient system for the fibres, and which converges to the map on twisted homology induced by \eqref{eq:lifted-stab} with respect to the coefficient systems $T_n^k$ and $T_{n+1}^k$. By Theorem~A of \cite{Palmer2018HomologicalstabilitymoduliI} and the universal coefficient theorem, the map of $E^2$ pages is an isomorphism for $t \leq \tfrac{n-k}{2}$. The Zeeman comparison theorem therefore implies that the map in the limit is also an isomorphism up to degree $\tfrac{n-k}{2}$. This completes the proof of Theorem \ref{ttwisted}, except for the split-injectivity statement.

The proof of split-injectivity in \S 7 of \cite{Palmer2018Twistedhomologicalstability} generalises verbatim to establish the split-injectivity statement of Theorem \ref{ttwisted}.
\end{proof}

\section{Stability for moduli spaces of labelled disconnected submanifolds}\label{s:proof-labelled}

We now prove Theorem \ref{tlabelled} as a corollary of Theorem \ref{ttwisted}. This will be another spectral sequence comparison argument, using a map of Serre spectral sequences induced by the square \eqref{eq:square-stabilisation}, so as a first step we prove:

\begin{lem}\label{l:square-fibre-bundles}
The vertical maps in the square \eqref{eq:square-stabilisation} are Serre fibrations.
\end{lem}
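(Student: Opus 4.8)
The plan is to treat both vertical maps simultaneously: the argument for the map involving $\hat{n}$ is obtained from the one for the map involving $n$ by replacing $n$, $M(0)$ and $\Sigma_n$ throughout by $n+1$, $M(-1)$ and $\Sigma_{n+1}$, so I will only write out the case of
\[
p_n \colon C_{nP}(M,Z;G) \longrightarrow C_{nP}(M;G).
\]
By Definition \ref{d:labelled-disconn-submfld}, this map is the restriction to path-components of the map
\[
\bar{\pi}_n \colon Z_n / (G \wr \Sigma_n) \longrightarrow \mathrm{Emb}(nP,M(0)) / (G \wr \Sigma_n)
\]
obtained by passing to $(G \wr \Sigma_n)$-orbits from the equivariant map $\pi_n \colon Z_n \to \mathrm{Emb}(nP,M(0))$: its source $C_{nP}(M,Z;G)$ is one path-component of $Z_n / (G \wr \Sigma_n)$, its target $C_{nP}(M;G)$ is one path-component of $\mathrm{Emb}(nP,M(0)) / (G \wr \Sigma_n)$, and $\bar{\pi}_n$ carries the former into the latter. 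Thus it suffices to show that $\bar{\pi}_n$ itself is a Serre fibration: the restriction of a Serre fibration $f \colon E \to B$ to any union $W$ of path-components of $E$ is again a Serre fibration onto its image, because a lifting problem over a cube can be solved in $E$ and the solution then automatically takes values in $W$ (a cube is path-connected and the solution already takes values in $W$ on the face $I^k \times \{0\}$), while surjectivity onto the relevant path-components of $B$ follows by lifting paths.

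To see that $\bar{\pi}_n$ is a Serre fibration I would use two facts. First, $\pi_n$ is a Serre fibration: since $\pi \colon Z \to E$ is one, so is the $n$-fold product $\pi^n \colon Z^n \to E^n$, and $\pi_n$ is the pullback of $\pi^n$ along the inclusion $\mathrm{Emb}(nP,M(0)) \hookrightarrow E^n$. Second, the action of $G \wr \Sigma_n$ on $\mathrm{Emb}(nP,M(0))$ is free and the quotient map
\[
q_n \colon \mathrm{Emb}(nP,M(0)) \longrightarrow \mathrm{Emb}(nP,M(0)) / (G \wr \Sigma_n)
\]
is a principal $(G \wr \Sigma_n)$-bundle; this follows from the local-retractility results of \S4 of \cite{Palmer2018HomologicalstabilitymoduliI} together with Theorem~A of \cite{Palais1960Localtrivialityof}. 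Fix a trivialising open cover of the target of $q_n$, and over a member $U$ a section $s_U \colon U \to \mathrm{Emb}(nP,M(0))$. By equivariance of $\pi_n$, the restriction $\pi_n^{-1}(q_n^{-1}(U)) \to q_n^{-1}(U)$ is $(G \wr \Sigma_n)$-equivariantly isomorphic to $(G \wr \Sigma_n) \times \bigl( s_U^{*}Z_n \to U \bigr)$, with $s_U^{*}Z_n := \pi_n^{-1}(s_U(U))$ and the group acting only on the first factor; passing to $(G \wr \Sigma_n)$-orbits identifies $\bar{\pi}_n^{-1}(U) \to U$ with the projection $s_U^{*}Z_n \to U$, which is a Serre fibration, being a pullback of $\pi_n$ along $s_U$. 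Since being a Serre fibration is a local condition on the base, $\bar{\pi}_n$ is a Serre fibration, and together with the reduction of the previous paragraph this proves the lemma.

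There is no serious difficulty here; the proof is essentially bookkeeping. The point requiring the most care is the second fact above — invoking the correct local-retractility statement from Part I so as to know that $q_n$ is a principal $(G \wr \Sigma_n)$-bundle, and then keeping track of the equivariance carefully enough that passing to $(G \wr \Sigma_n)$-orbits genuinely turns $\bar{\pi}_n$ over $U$ into the honest fibration $s_U^{*}Z_n \to U$ rather than something weaker.
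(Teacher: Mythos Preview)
Your proof is correct and follows essentially the same approach as the paper: reduce to showing that $\bar{\pi}_n$ is a Serre fibration, use that $\pi_n$ is one (as a pullback of $\pi^n$) and that the quotient $q_n$ is a principal $(G\wr\Sigma_n)$-bundle (via Propositions~4.15 and~4.8 of \cite{Palmer2018HomologicalstabilitymoduliI}), then conclude. The only difference is cosmetic: the paper packages your final local-triviality argument as a citation to Lemma~4.19 of \cite{Palmer2018HomologicalstabilitymoduliI}, whereas you unpack it by hand using local sections of $q_n$.
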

\begin{proof}
We will show that the map $C_{nP}(M,Z;G) \to C_{nP}(M;G)$ is a Serre fibration; an identical argument will then show that the other vertical map of \eqref{eq:square-stabilisation} is also a Serre fibration.

By assumption (see Input \ref{input-data}), the map $\pi \colon Z \to E$ is a Serre fibration and also $G$-equivariant. The $n$-fold product $\pi^n \colon Z^n \to E^n$ is also a Serre fibration, and so is its pullback $\pi_n \colon Z_n \to E_n$ along the inclusion $E_n = \mathrm{Emb}(nP,\mathring{M}) \subset E^n = \mathrm{Emb}(P,\hat{M})^n$. Since the map $\pi_n \colon Z_n \to E_n$ is also $(G \wr \Sigma_n)$-equivariant, there is an induced square
\begin{equation}\label{eq:pullback-square}
\centering
\begin{split}
\begin{tikzpicture}
[x=1mm,y=1mm]
\node (tl) at (0,14) {$Z_n$};
\node (tr) at (30,14) {$Z_n / (G \wr \Sigma_n)$};
\node (bl) at (0,0) {$E_n$};
\node (br) at (30,0) {$E_n / (G \wr \Sigma_n),$};
\draw[->] (tl) to (tr);
\draw[->] (bl) to (br);
\draw[->] (tl) to node[left,font=\small]{$\pi_n$} (bl);
\draw[->] (tr) to node[right,font=\small]{$\bar{\pi}_n$} (br);
\node at (4,10) {$\lrcorner$};
\end{tikzpicture}
\end{split}
\end{equation}
which is a pullback as indicated. By Propositions 4.15 and 4.8 of \cite{Palmer2018HomologicalstabilitymoduliI}, the bottom horizontal map is a principal $(G \wr \Sigma_n)$-bundle. So we know that the left-hand vertical map $\pi_n$ and the bottom horizontal map in \eqref{eq:pullback-square} are Serre fibrations, and the bottom horizontal map is also obviously surjective. Thus Lemma 4.19 of \cite{Palmer2018HomologicalstabilitymoduliI} implies that the right-hand vertical map $\bar{\pi}_n$ is also a Serre fibration. Finally, the map $C_{nP}(M,Z;G) \to C_{nP}(M;G)$ is just the restriction of $\bar{\pi}_n$ to one path-component of its source and one path-component of its target, so it is also a Serre fibration.
\end{proof}

Let $R$ be a ring. There is an induced map of Serre spectral sequences, converging to
\[
H_*(C_{nP}(M,Z;G);R) \longrightarrow H_*(C_{\hat{n}P}(M,Z;G);R)
\]
and whose map of $E^2$ pages is of the form
\[
H_s(C_{nP}(M;G);H_t(f_n^{-1}(i_{\{1,\ldots,n\}});R)) \longrightarrow H_s(C_{\hat{n}P}(M;G);H_t(f_{\hat{n}}^{-1}(i_{\{0,\ldots,n\}});R)),
\]
where $f_n$ and $f_{\hat{n}}$ denote the vertical maps in the square \eqref{eq:square-stabilisation} and where $i_{\{1,\ldots,n\}} = \{ [i_1],\ldots,[i_n] \}$ and $i_{\{0,\ldots,n\}} = \{ [i_0] , [i_1],\ldots,[i_n] \}$ are the basepoints.

\begin{rmk}\label{r:identifications}
In this section (as in \S\ref{s:ths}, see Remark \ref{r:nhat-nplusone}) it will be more convenient to view the targets of the stabilisation maps in \eqref{eq:square-stabilisation} as $C_{(n+1)P}(M,Z;G)$ and $C_{(n+1)P}(M;G)$ respectively, so the map of Serre spectral sequences is then
\begin{equation}\label{eq:stab-map-labelled}
H_*(C_{nP}(M,Z;G);R) \longrightarrow H_*(C_{(n+1)P}(M,Z;G);R)
\end{equation}
in the limit and
\begin{equation}\label{eq:twisted-stab-map}
H_s(C_{nP}(M;G);H_t(f_n^{-1}(i_{\{1,\ldots,n\}});R)) \longrightarrow H_s(C_{(n+1)P}(M;G);H_t(f_{n+1}^{-1}(i_{\{1,\ldots,n+1\}});R))
\end{equation}
on the $E^2$ pages.

For these identifications, we use modifications of the maps $\gamma(1) = \mathrm{sh}_1 \circ - \colon E \to E$ and $\bar{\gamma}(1) \colon Z \to Z$ (\cf Definition \ref{d:shift}). Namely, we choose a diffeomorphism $\kappa \colon \hat{M} \to M$ (note that there is no $\hat{}$ on the codomain) so that $\kappa = \mathrm{sh}_1$ on $M \subset \hat{M}$. This induces a $G$-equivariant endomorphism $\kappa \circ - \colon E \to E$, where we recall that $E = \mathrm{Emb}(P,\hat{M})$. We then choose a $G$-equivariant lift $\bar{\kappa} \colon Z \to Z$ of this so that $\bar{\kappa} = \bar{\gamma}(1)$ on $\pi^{-1}(\mathrm{Emb}(P,M)) \subset Z$. Then the identifications
\[
\phantom{Z;Z;Z;}C_{\hat{n}P}(M;G) \cong C_{(n+1)P}(M;G) \qquad\qquad C_{\hat{n}P}(M,Z;G) \cong C_{(n+1)P}(M,Z;G)
\]
are defined by
\[
\{[\varphi_0],\ldots,[\varphi_n]\} \longmapsto \{[\kappa \circ \varphi_0],\ldots,[\kappa \circ \varphi_n]\} \qquad\qquad \{[z_0],\ldots,[z_n]\} \longmapsto \{[\bar{\kappa}(z_0)],\ldots,[\bar{\kappa}(z_n)]\}
\]
respectively.
\end{rmk}

\begin{prop}\label{p:twisted-example}
Suppose that $R$ is a principal ideal domain and that the fibration $\pi \colon Z \to E$ has path-connected fibres, whose homology is a flat $R$-module in each degree. Then for each $t \geq 0$ there is a functor $T_t \colon \cB_P(M) \to \mathsf{Ab}$ of degree at most $t$ such that, up to isomorphism, the map $T_t(\iota_n) \colon T_t(n) \to T_t(n+1)$ is the map
\begin{equation}\label{eq:map-of-fibres}
H_t(f_n^{-1}(i_{\{1,\ldots,n\}});R) \longrightarrow H_t(f_{n+1}^{-1}(i_{\{1,\ldots,n+1\}});R)
\end{equation}
induced by the restriction of the top horizontal map of \eqref{eq:square-stabilisation}.
\end{prop}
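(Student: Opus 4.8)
The plan is to identify the fibres occurring in \eqref{eq:map-of-fibres} explicitly, recognise the resulting sequence of homology groups as a functor on $\cB_P(M)$, and then bound its degree by induction using the Künneth theorem. For the first step, note that $G \wr \Sigma_n$ acts \emph{freely} on $E_n = \mathrm{Emb}(nP,\mathring{M})$: two coordinates of a tuple in $E_n$ have disjoint images, so no non-trivial permutation can stabilise a point, and the $G$-action by precomposition is free because embeddings are injective. Consequently the fibre of $\bar{\pi}_n \colon Z_n/(G\wr\Sigma_n) \to E_n/(G\wr\Sigma_n)$ over the orbit of $(i_1,\ldots,i_n)$ is canonically homeomorphic to the fibre of $\pi_n = \pi^n|_{E_n}$ over $(i_1,\ldots,i_n)$, namely $\prod_{k=1}^{n} \pi^{-1}(i_k)$. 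Since the fibres of $\pi$ are path-connected, this product is path-connected and contains the basepoint $\{[\bar{\imath}_1],\ldots,[\bar{\imath}_n]\}$, so it lies in the single path-component $C_{nP}(M,Z;G)$; hence $f_n^{-1}(i_{\{1,\ldots,n\}}) = \prod_{k=1}^{n} \pi^{-1}(i_k)$. Using the homeomorphisms $\bar{\gamma}(r)$ from Input \ref{input-data} to identify all the $\pi^{-1}(i_k)$ with a single space $F := \pi^{-1}(i_0)$, this fibre becomes $F^n$, and under these identifications the map \eqref{eq:map-of-fibres} becomes the basepoint inclusion $F^n \hookrightarrow F^{n+1}$ (insert $\bar{\imath}$ in the new factor) followed by $H_t(-;R)$.

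Next I would promote $n \mapsto f_n^{-1}(i_{\{1,\ldots,n\}})$ to a functor $\cB_P(M) \to \mathsf{Ab}$. The Serre fibrations $f_n$ over the spaces $C_{nP}(M;G)$ are compatible with the stabilisation maps via the square \eqref{eq:square-stabilisation}, and, since $\pi$ is $G$-equivariant and carries the shift data $\bar{\gamma}$, also with the operations of forgetting and of adjoining strands of a configuration. This is exactly the type of data that, in \cite{Palmer2018Twistedhomologicalstability}, is shown to yield a functor on the associated category by passing to fibrewise homology; the construction there generalises directly. Concretely, a morphism $m \to n$ of $\cB_P(M)$ with parameter $k$ and path $\ell$ induces $H_t(F^m;R) \to H_t(F^k;R)$ (a coordinate projection), then the monodromy isomorphism of the fibration along $\ell$, then the basepoint inclusion $H_t(F^k;R) \to H_t(F^n;R)$; the automorphism group $\pi_1(C_{nP}(M;G))$ acts on $T_t(n) = H_t(F^n;R)$ by the monodromy of $f_n$, which covers the $\Sigma_n$-action permuting the factors. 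Well-definedness up to endpoint-preserving homotopy of $\ell$ and compatibility with composition are checked exactly as in loc. cit. By construction $T_t(\iota_n)$ is the map \eqref{eq:map-of-fibres}.

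Finally I would bound $\mathrm{deg}(T_t)$ by induction on $t$. As $R$ is a principal ideal domain and $H_*(F;R)$ is flat (hence torsion-free) in each degree, the Künneth theorem gives a natural isomorphism $T_t(n) = H_t(F^n;R) \cong \bigoplus_{t_1 + \cdots + t_n = t}\ \bigotimes_{j=1}^{n} H_{t_j}(F;R)$ with no Tor terms; in particular each $H_*(F^n;R)$ is again flat, so the induction proceeds. For $t=0$, $T_0$ is the constant functor $n \mapsto R$ (using $H_0(F;R)=R$), which has degree $0$. For $t \geq 1$, Künneth identifies $T_t(\iota_n)$ with the inclusion of the summand where the new exponent is $0$, so $\mathrm{coker}(T_t \to T_t\circ S)$ is naturally isomorphic to $\bigoplus_{b=1}^{t} T_{t-b} \otimes_R H_b(F;R)$ as a functor on $\cB_P(M)$. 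Each $H_b(F;R)$ is flat, so tensoring with it does not increase degrees, whence $\mathrm{deg}(T_{t-b} \otimes_R H_b(F;R)) \leq \mathrm{deg}(T_{t-b}) \leq t-b \leq t-1$ by the inductive hypothesis. Therefore $\mathrm{deg}(\mathrm{coker}(T_t \to T_t\circ S)) \leq t-1$, and so $\mathrm{deg}(T_t) \leq t$.

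The main obstacle is the second step: genuinely checking that the fibrewise-homology data assembles into a functor on $\cB_P(M)$ — that the monodromy along paths is well-defined up to homotopy, that it is compatible with the forget/adjoin operations on configurations, and that everything respects composition. In the body this is handled by the same formal argument as in \cite{Palmer2018Twistedhomologicalstability}, but it is the step requiring real care; the identification of the fibres (using freeness of the $G\wr\Sigma_n$-action) and the Künneth/degree computation are comparatively routine.
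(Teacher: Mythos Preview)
Your argument is correct, but the paper's proof is significantly more streamlined, and the simplification addresses precisely the step you flag as the ``main obstacle''. Rather than building the functor directly on $\cB_P(M)$ using monodromy of the fibrations $f_n$, the paper observes that there is a forgetful functor $\cB_P(M) \to \Sigma$ (where $\Sigma$ is the category of finite sets and partially-defined injections, as in \cite{Palmer2018Twistedhomologicalstability}) that remembers only the induced partial bijection on endpoints. It then takes $Y = \bar{\pi}^{-1}([i_0])$, invokes Example~4.1 of \cite{Palmer2018Twistedhomologicalstability} to obtain a functor $\Sigma \to \mathsf{Ab}$ sending $n \mapsto H_t(Y^n;R)$ (with $\Sigma_n$ acting by permuting factors and the non-bijective morphisms acting by projections and basepoint inclusions), and cites Lemma~4.2 and Remark~4.4 of \cite{Palmer2018Twistedhomologicalstability} for the degree bound. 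Pulling back along $\cB_P(M) \to \Sigma$ preserves the degree and gives $T_t$ immediately, with no monodromy to check.

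What each approach buys: your construction is in a sense more honest for the intended application, since the local coefficient system on the $E^2$ page of the Serre spectral sequence really is given by monodromy, and your $T_t$ is that system on the nose. The paper's $T_t$ factors through $\Sigma$, so $\pi_1(C_{nP}(M;G))$ acts on $T_t(n)$ only via the quotient $\Sigma_n$; this is not literally the monodromy action in general, but the proposition only asserts an identification of the map $T_t(\iota_n)$ with \eqref{eq:map-of-fibres}, which both functors satisfy. The payoff of the paper's route is that functoriality and the degree bound come for free from the cited results, whereas you have to redo the K{\"u}nneth induction (essentially reproving Lemma~4.2 of \cite{Palmer2018Twistedhomologicalstability}) and verify by hand that the extra $S$-strand is constant so that the $H_b(F;R)$ factor in the cokernel carries the trivial action. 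Your verification of this last point is correct, but it is exactly the work that factoring through $\Sigma$ avoids.
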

\begin{proof}
As in \cite{Palmer2018Twistedhomologicalstability}, write $\Sigma$ for the category with objects $\{1,\ldots,n\}$ for non-negative integers $n$ (with $n=0$ corresponding to the empty set) and whose morphisms are partially-defined injections. This may be viewed as a special case of $\cB_P(M)$: for example, $\cB_{\mathrm{pt}}(\bR^3) \cong \Sigma$.

The map $\pi \colon Z \to E$ is $G$-equivariant; write $\bar{\pi} \colon Z/G \to E/G$ for the induced map of orbit spaces. Let $Y$ be the fibre $\bar{\pi}^{-1}([i_0])$ with basepoint $[\bar{\imath}_0]$. Example 4.1 of \cite{Palmer2018Twistedhomologicalstability} gives us a functor
\[
T_t \colon \Sigma \longrightarrow \mathsf{Ab}
\]
such that $T_t(\iota_n) \colon T_t(n) \to T_t(n+1)$ is the map on $H_t$ induced by the inclusion of pointed spaces $Y^n \hookrightarrow Y \times Y^n = Y^{n+1}$, in other words, the map $([\bar{\imath}_0],-,\ldots,-)$. By Lemma 4.2 and Remark 4.4 of \cite{Palmer2018Twistedhomologicalstability} this functor has degree at most $t$. There is a functor $\cB_P(M) \to \Sigma$ given by remembering just the partial injection $\{[i_1],\ldots,[i_m]\} \dashrightarrow \{[i_1],\ldots,[i_n]\}$ induced by a path $\ell$ of configurations as in Definition \ref{d:bpm}. Precomposition by this functor preserves the degree of functors into the category $\mathsf{Ab}$,\footnote{See \S 4.3 of \cite{Palmer2017comparisontwistedcoefficient} for a more general discussion of when precomposition by a functor preserves degree.} so the composition
\[
T_t \colon \cB_P(M) \longrightarrow \Sigma \longrightarrow \mathsf{Ab}
\]
also has degree at most $t$.

The map \eqref{eq:map-of-fibres} is induced by the composition
$f_n^{-1}(i_{\{1,\ldots,n\}}) \to f_{\hat{n}}^{-1}(i_{\{0,\ldots,n\}}) \to f_{n+1}^{-1}(i_{\{1,\ldots,n+1\}})$, which may equivalently be written
\[
\prod_{\alpha = 1}^n \bar{\pi}^{-1}([i_\alpha]) \longrightarrow \prod_{\alpha = 0}^n \bar{\pi}^{-1}([i_\alpha]) \longrightarrow \prod_{\alpha = 1}^{n+1} \bar{\pi}^{-1}([i_\alpha]),
\]
where the first map is $([\bar{\imath}_0],-,\ldots,-)$ and the second is a restriction of $\doublebar{\gamma}(1)^{n+1}$ (\cf Remark \ref{r:identifications}), where $\doublebar{\gamma}(r)$ denotes the map $Z/G \to Z/G$ induced by the $G$-equivariant map $\bar{\gamma}(r) \colon Z \to Z$. The domain may be identified with $Y^n$ via the homeomorphism $\doublebar{\gamma}(1) \times \cdots \times \doublebar{\gamma}(n)$ and the codomain with $Y^{n+1}$ via the homeomorphism $\doublebar{\gamma}(1) \times \cdots \times \doublebar{\gamma}(n+1)$. Under these identifications (using the fact that $\bar{\gamma}$ is a homomorphism), we see that \eqref{eq:map-of-fibres} becomes the map induced on $H_t$ by the inclusion $Y^n \hookrightarrow Y \times Y^n$, which is exactly $T_t(\iota_n)$, as required.
\end{proof}

\begin{proof}[Proof of Theorem \ref{tlabelled}]
The argument in \S 5.2 of \cite{Palmer2018HomologicalstabilitymoduliI} for the split-injectivity part of the statement generalises verbatim to the setting of moduli spaces of disconnected submanifolds with labels. All one needs, in order to apply Lemma 2 of \cite{Dold1962DecompositiontheoremsSn} to deduce split-injectivity, is to be able to define maps
\[
C_{nP}(M,Z;G) \longrightarrow \mathrm{Sp}^{\binom{n}{k}} (C_{kP}(M,Z;G))
\]
satisfying a certain equation up to homotopy. Viewing $C_{nP}(M,Z;G)$ as a subspace of the symmetric power $\mathrm{Sp}^n(Z/G)$ (\cf Definition \ref{d:labelled-disconn-submfld}), we construct such maps as restrictions of the maps
\[
\mathrm{Sp}^n(Z/G) \longrightarrow \mathrm{Sp}^{\binom{n}{k}} (\mathrm{Sp}^k (Z/G))
\]
that forget $n-k$ points in all possible ways. Thus \eqref{eq:stab-map-labelled} is always split-injective.

It remains to prove the second part of the statement, that when $\pi \colon Z \to E$ has path-connected fibres, $p \leq \tfrac12(m-3)$ and $G$ is an open subgroup of $\mathrm{Diff}(P)$, the map \eqref{eq:stab-map-labelled} is an isomorphism for $* \leq \tfrac{n}{2} - 1$, and also for $* \leq \tfrac{n}{2}$ if $R$ is a field.

First let $R$ be a field, so that every $R$-module is flat. Then these three assumptions, together with Theorem \ref{ttwisted} and Proposition \ref{p:twisted-example}, imply that the map \eqref{eq:twisted-stab-map} is an isomorphism for $s \leq \tfrac{n-t}{2}$, in particular for total degree $s+t \leq \tfrac{n}{2}$. The Zeeman comparison theorem then implies that \eqref{eq:stab-map-labelled} is also an isomorphism for $* \leq \tfrac{n}{2}$.

In general, if a continuous map $X \to Y$ induces isomorphisms on homology up to degree $i$ with all field coefficients, then it induces isomorphisms on integral homology (and therefore with any untwisted coefficients, by the universal coefficient theorem) up to degree $i-1$. This follows from the five-lemma applied to the natural long exact sequences induced by the short exact sequences
\[
0 \to \bZ/(p^r) \to \bZ/(p^{r+1}) \to \bZ/(p) \to 0 \qquad\quad 0 \to \bZ \to \bQ \to \bQ/\bZ \cong\! \bigoplus_{p \text{ prime}} \underset{r \to \infty}{\mathrm{colim}} \, \bZ/(p^r) \to 0
\]
of coefficient groups. Thus the statement in the special case when $R$ is a field implies the statement for general $R$.
\end{proof}

%%%%%%%%%%%%%%%%%%%%%%%%%%%%%%%%%%%%%%%%%%%%%%%%%%%%%%%%%%%%%%%%%%%%%%%%%%%%%%%%%%
%%%%%%%%%%%%%%%%%%%%%%%%%%%%%%%%%%%%%%%%%%%%%%%%%%%%%%%%%%%%%%%%%%%%%%%%%%%%%%%%%%

\phantomsection
\addcontentsline{toc}{section}{References}
\renewcommand{\bibfont}{\normalfont\small}
\setlength{\bibitemsep}{0pt}
\printbibliography

\vspace{1em}

\noindent {\itshape Institutul de Matematică Simion Stoilow al Academiei Române,
21 Calea Griviței, 010702 București, Romania}

\noindent {\tt mpanghel@imar.ro}

\end{document}